\theoremstyle{plain}
\newtheorem{thm}{Theorem}[section]
\newtheorem*{thmA}{Theorem A}
\newtheorem*{thmB}{Theorem B}
\newtheorem*{thmC}{Theorem C}
\newtheorem{lem}[thm]{Lemma}
\newtheorem{prop}[thm]{Proposition}
\newtheorem{cor}[thm]{Corollary}
\theoremstyle{definition}
\newtheorem{defi}[thm]{Definition}
\theoremstyle{remark}
\newtheorem{rmk}[thm]{Remark}
\newtheorem{eg}[thm]{Example}
\newtheorem{claim}[thm]{Claim}
\numberwithin{equation}{section}
\def\Mustata{Mus\-ta\-\c{t}\u{a}\xspace}
\def\Z{{\mathbf Z}}
\def\Q{{\mathbf Q}}
\def\R{{\mathbf R}}
\def\C{{\mathbf C}}
\def\QQ{\texorpdfstring{$\Q$}{Q}}
\def\A{{\mathbf A}}
\def\P{{\mathbf P}}
\def\cE{\mathcal{E}}
\def\cF{\mathcal{F}}
\def\cH{\mathcal{H}}
\def\I{\mathcal{I}}
\def\J{\mathcal{J}}
\def\O{\mathcal{O}}
\def\fra{\mathfrak{a}}
\def\frb{\mathfrak{b}}
\def\frc{\mathfrak{c}}
\def\frd{\mathfrak{d}}
\def\frj{\mathfrak{j}}
\def\frm{\mathfrak{m}}
\def\frn{\mathfrak{n}}
\def\frA{\mathfrak{A}}
\def\frB{\mathfrak{B}}
\def\a{\alpha}
\def\b{\beta}
\def\g{\gamma}
\def\f{\phi}
\def\ff{\psi}
\def\e{\eta}
\def\ep{\epsilon}
\def\l{\lambda}
\def\n{\nu}
\def\m{\mu}
\def\om{\omega}
\def\p{\pi}
\def\x{\xi}
\def\Om{\Omega}
\def\.{\cdot}
\def\^{\widehat}
\def\~{\widetilde}
\def\ov{\overline}
\def\inj{\hookrightarrow}
\def\lrd{\lfloor}
\def\rrd{\rfloor}
\def\({\left(}
\def\){\right)}
\newcommand{\rd}[1]{\lrd{#1}\rrd}
\renewcommand{\and}{ \ \ \text{ and } \ \ }
\renewcommand{\for}{ \ \ \text{ for } \ \ }
\newcommand{\fall}{ \ \ \text{ for all } \ \ }
\newcommand{\ku}[2]{k_{#1}({#2})} 
\newcommand{\au}[2]{a_{#1}({#2})} 
\newcommand{\Ku}[2]{K_{{#1}/{#2}}} 
\newcommand{\mld}[2]{\operatorname{mld}_{#1}({#2})}
\newcommand{\lct}[1]{\operatorname{lct}({#1})}
\newcommand{\MIu}[1]{\J\big({#1}\big)}
\newcommand{\kj}[2]{k^\diamond_{#1}({#2})} 
\newcommand{\aj}[2]{a^\diamond_{#1}({#2})} 
\newcommand{\Kj}[2]{K^\diamond_{{#1}/{#2}}} 
\newcommand{\jmld}[2]{\operatorname{mld}^\diamond_{#1}({#2})}
\newcommand{\jlct}[1]{\operatorname{lct}^\diamond({#1})}
\newcommand{\MIj}[1]{\J^\diamond\big({#1}\big)}
\newcommand{\km}[2]{\^k_{#1}({#2})} 
\newcommand{\am}[2]{\^a_{#1}({#2})} 
\newcommand{\Km}[2]{\^K_{{#1}/{#2}}}
\newcommand{\MIm}[1]{\^\J\big({#1}\big)}
\def\Jac{\frj}
\def\reg{\mathrm{reg}}
\def\red{\mathrm{red}}
\newcommand{\lcid}[1]{\frd_{#1}}
\newcommand{\nash}[1]{\frn_{#1}}
\newcommand{\cond}[1]{\frc_{#1}}
\newcommand{\grom}[1]{\om_{#1}^{\rm GR}}
\newcommand{\mom}[1]{\^\om_{#1}}
\DeclareMathOperator{\codim} {codim}
\DeclareMathOperator{\im} {Im}
\DeclareMathOperator{\Gr} {Gr}
\DeclareMathOperator{\Spec} {Spec}
\DeclareMathOperator{\val} {val}
\DeclareMathOperator{\Ex} {Ex}
\DeclareMathOperator{\ord} {ord}
\DeclareMathOperator{\Fitt} {Fitt}
\DeclareMathOperator{\Hom} {Hom}
\def\cHom{{\cH}om}
\begin{document}



\title    		 {Jacobian discrepancies and rational singularities} 
			
\author    {Tommaso de Fernex}
\address   {Department of Mathematics,
            University of Utah,
            155 South 1400 East,
            Salt Lake City, UT 84112, USA} 
\email     {defernex@math.utah.edu}

\author    {Roi Docampo}
\address   {Department of Mathematics,
            University of Utah,
            155 South 1400 East,
            Salt Lake City, UT 84112, USA} 
\email     {docampo@math.utah.edu}

\thanks    {The first author is partially supported by NSF CAREER Grant
            DMS-0847059.}
\thanks		{Compiled on \today. Filename \small\tt\jobname}

\subjclass [2010]{Primary 14J17; Secondary 14F18, 14E18}
\keywords  {Discrepancy, Jacobian, adjunction, Nash blow-up, jet scheme, 
            multiplier ideal, rational singularity, Du Bois singularity}


\begin{abstract}
Inspired by several works on jet schemes and motivic integration, we
consider an extension to singular varieties of the classical definition of
discrepancy for morphisms of smooth varieties. The resulting invariant,
which we call \emph{Jacobian discrepancy}, is closely related to the jet
schemes and the Nash blow-up of the variety. This notion leads to a
framework in which adjunction and inversion of adjunction hold in full
generality, and several consequences are drawn from these properties. The
main result of the paper is a formula measuring the gap between the
dualizing sheaf and the Grauert--Riemenschneider canonical sheaf of a
normal variety. As an application, we give characterizations for rational
and Du Bois singularities on normal Cohen--Macaulay varieties in terms of
Jacobian discrepancies. In the case when the canonical class of the variety
is $\Q$-Cartier, our result provides the necessary corrections
for the converses to hold in theorems of Elkik, of Kov\'acs, Schwede and
Smith, and of Koll\'ar and Kov\'acs on rational and Du Bois singularities.
\end{abstract}

\maketitle


\section{Introduction}
\label{s:intro}

The main result of the paper is a formula quantifying the difference
between the dualizing sheaf and the Grauert--Riemenschneider canonical
sheaf of a normal variety, and is stated below in Theorem~C. As a
motivation of this more technical result, we begin by first describing its
implications to the study of rational and Du~Bois singularities.

Rational singularities, first introduced and studied in dimension two as a
generalization of Du Val singularities \cite{Art66,Lip69-rat}, can be
thought as those singularities that do not contribute to the cohomology of
the structure sheaf of the variety. The connection with the singularities
in the minimal model program was discovered by Elkik \cite{Elk81}. Du Bois
singularities \cite{DB81,Ste83} form a wider, more mysterious class which
arises naturally from the point of view of Hodge theory and satisfies good
vanishing properties, see for instance \cite{GKKP11}. The link between Du
Bois singularities and log canonical singularities is a recent achievement
first established in the Cohen--Macaulay case in \cite{KSS10} and then,
unconditionally, in \cite{KK10}. For both classes of singularities, the
connection with the singularities in the minimal model program appears to
be unidirectional, as most rational and Du Bois singularities do not seem
at first to satisfy any reasonable condition from the point of view of
valuations and discrepancies.

As we shall see, there is in fact a deeper connection going the other way
around which provides characterizations of these singularities when the
variety is Cohen--Macaulay. 

The precise connection depends on the tension between two closely related
ideals attached to the singularities. The first one is the \emph{lci-defect
ideal} $\frd_X$ of $X$. This object is very natural from the point of view
of liaison theory and is related to the Nash transformation of $X$ with
respect to the dualizing sheaf $\om_X$. In concrete terms, $\lcid X$ is the
ideal generated by the equations of the residual intersections with all the
reduced, locally complete intersection schemes $V \supset X$ of the same
dimension. The second ideal, called the \emph{lci-defect ideal of level
$r$} of $X$ and denoted by $\lcid{r,X}$, is defined when the canonical
class $K_X$ is $\Q$-Cartier and depends on the integer $r$ such that $rK_X$
is Cartier. The two ideals agree when $r = 1$. In general, both ideals
vanish precisely on the locus where $X$ is not locally complete
intersection. 

In the following theorem we make the necessary correction for the converse
to hold in the aforementioned results of~\cite{Elk81, KSS10, KK10}.
Restricting to the Cohen--Macaulay case, the result yields a
characterization of rational and Du Bois singularities in terms of
discrepancies. In particular, since rational singularities are
Cohen--Macaulay, the result provides a discrepancy characterization of all
rational singularities.

\begin{thmA}
Let $X$ be a normal variety, and assume that $rK_X$ is Cartier for some
positive integer $r$.
\begin{enumerate}
\item 
If $X$ has rational singularities then the pair $(X,\lcid{r,X}^{1/r}\.\lcid X^{-1})$
is canonical.
\item 
If $X$ has Du Bois singularities then the pair $(X,\lcid{r,X}^{1/r}\.\lcid X^{-1})$ is
log canonical. 
\end{enumerate}
Moreover, the converse holds in both cases whenever $X$ is Cohen--Macaulay.
\end{thmA}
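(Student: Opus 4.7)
My approach is to invoke Theorem~C as a bridge between Jacobian discrepancies and sheaf theory: it converts the canonical (resp.\ log canonical) condition on the pair $(X,\lcid{r,X}^{1/r}\.\lcid X^{-1})$ into a precise inclusion relating the dualizing sheaf $\om_X$ and the Grauert--Riemenschneider sheaf $\grom X = \pi_*\om_{\~X}$ on a log resolution $\pi\colon \~X\to X$. I then match this inclusion with the classical sheaf-theoretic characterizations of rational and Du Bois singularities on Cohen--Macaulay varieties.

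Concretely, I fix a log resolution $\pi\colon \~X\to X$ that resolves $\lcid X$ and $\lcid{r,X}$ and factors through the Nash blow-up used in defining the Jacobian discrepancies. A prime divisor $E$ on $\~X$ is a center of non-canonicity (resp.\ non-log-canonicity) for $(X,\lcid{r,X}^{1/r}\.\lcid X^{-1})$ precisely when
\[
r\aj{E}{X} - \val_E(\lcid{r,X}) + r\val_E(\lcid X) < 0 \quad (\text{resp. } < -r).
\]
The point is that, thanks to Theorem~C, this numerical combination measures along $E$ the gap between $\om_X$ and the natural subsheaves of $\om_X$ obtained by pushing forward twists of $\om_{\~X}$, with the Jacobian correction inside $\aj{E}{X}$ accounting for the failure of $X$ to be lci. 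Consequently, the pair is canonical iff $\om_X = \grom X$, and log canonical iff $\om_X\subseteq \pi_*\om_{\~X}(F)$, where $F$ is the reduced exceptional divisor of~$\pi$.

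With this dictionary in hand, the forward direction of (a) follows from the standard fact that rational singularities are Cohen--Macaulay with $\om_X = \grom X$, while (b) follows from the characterization due to Kov\'acs--Schwede--Smith and \Kollar--Kov\'acs that for Du Bois $X$ one has $\om_X \subseteq \pi_*\om_{\~X}(F)$. For the converses in the Cohen--Macaulay case one reverses the implications: canonicity of the pair yields $\om_X = \grom X$, which for Cohen--Macaulay $X$ is equivalent to $X$ having rational singularities; log canonicity yields the inclusion $\om_X\subseteq \pi_*\om_{\~X}(F)$, which for Cohen--Macaulay $X$ is equivalent to $X$ being Du Bois.

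The main obstacle is the bookkeeping in the translation step: one must verify that Theorem~C, stated as a global comparison between $\om_X$ and $\grom X$, encodes exactly the combination $r\aj{E}{X} - \val_E(\lcid{r,X}) + r\val_E(\lcid X)$ along every prime exceptional $E$, and that passing from the canonical to the log canonical condition corresponds exactly to twisting by~$F$. This requires tracking three interacting twists --- the factor $r$ making $K_X$ Cartier, the inverse $\lcid X^{-1}$ compensating the non-lci nature of $X$, and the Jacobian correction inside $\aj{E}{X}$ --- so that they combine with the right signs. Once that accounting is done, both halves of the statement follow at once from Theorem~C combined with the classical sheaf-theoretic criteria for rational and Du Bois singularities.
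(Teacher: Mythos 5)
Your high-level strategy coincides with the paper's route: use Theorem~C (and its twisted version, Theorem~\ref{t:can(X,Z)}) to equate the singularity condition on the pair with a sheaf inclusion, then invoke Kempf's criterion and the Kov\'acs--Schwede--Smith criterion for the Cohen--Macaulay converses. However, the numerical identity you propose in the translation step is wrong. Expanding your expression with $\aj EX = \kj EX + 1$ and substituting the paper's Proposition~\ref{p:Q-Gor} ($\kj EX = \ku EX - \tfrac1r\ord_E(\lcid{r,X})$) yields
\[
r\,\aj EX - \ord_E(\lcid{r,X}) + r\,\ord_E(\lcid X)
= r\ku EX + r - 2\,\ord_E(\lcid{r,X}) + r\,\ord_E(\lcid X),
\]
which double-counts the $\lcid{r,X}$ correction: the Jacobian correction hidden inside $\aj EX$ \emph{already} contains the term $-\tfrac1r\ord_E(\lcid{r,X})$, so you must not subtract $\ord_E(\lcid{r,X})$ a second time. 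The correct translation is the clean identity
\[
\au E{X,\lcid{r,X}^{1/r}\.\lcid X^{-1}} = \aj E{X,\lcid X^{-1}},
\]
which follows immediately from Proposition~\ref{p:Q-Gor}; this is exactly what the paper uses (Corollary~\ref{c:rat-DB-QGor}) to reduce Theorem~A to the $\Q$-Cartier-free statement for J-canonicity (Corollary~\ref{c:rat-DB}). In other words, Theorem~C does not by itself encode the $\Q$-Gorenstein translation---it governs the pair $(X,\lcid X^{-1})$ with respect to J-discrepancies, and you need the separate comparison of Proposition~\ref{p:Q-Gor} as a bridge.

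A secondary point: passing from the canonical to the log canonical case is not literally ``twisting $\grom X$ by the reduced exceptional $F$'' inside Theorem~C. As in the paper's proof of Theorem~\ref{t:char}(b), one realizes $f_*\om_Y(F) = \grom{(X,\Jac_X^{-\l})}$ for $0<\l\ll1$ and then invokes the twisted Theorem~\ref{t:can(X,Z)} applied to $\Jac_X^{-\l}\.\lcid X^{-1}$; the $\l$-perturbation is what makes the round-down produce $-F$ without spoiling the colon-ideal identity. You gesture at this in the last paragraph, but as written the statement ``the pair is log canonical iff $\om_X \subseteq \pi_*\om_{\~X}(F)$'' still needs this small trick to be justified from Theorem~C.

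Once you replace your formula by the identity $\au E{X,\lcid{r,X}^{1/r}\.\lcid X^{-1}} = \aj E{X,\lcid X^{-1}}$ and fill in the $\l$-perturbation, your argument matches the paper's.
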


In either case, the Cohen--Macaulay condition is essential for the converse
to hold, see Example~\ref{eg:example}. If however the assumptions on the
singularities of the pair is strengthened by removing the contribution of
$\lcid X^{-1}$, then one obtains sufficient conditions for rational and Du
Bois singularities holding in a much more general setting. 

\medskip

We arrive at the above result by considering a quite different set of
questions that lead us to study an extension to arbitrary varieties of the
notion of discrepancy of a divisorial valuation over a smooth variety.

The candidate is an integer that simply measures the difference between the
Jacobian of the transformation (which gives the discrepancy in the smooth
case) and the Jacobian of the singularity. Specifically, given a resolution
of singularities $f \colon Y \to X$ of a complex variety $X$, we define the
\emph{Jacobian discrepancy} of a prime divisor $E \subset Y$ over $X$ to be
the integer
\[
\kj EX := \ord_E(\Jac_f) - \ord_E(\Jac_X),
\]
where $\Jac_f \subset \O_Y$ is the Jacobian ideal of the morphism $f$ and
$\Jac_X \subset \O_X$ is the Jacobian ideal of $X$. 

Jacobian discrepancies are closely related to \emph{Mather discrepancies},
where only the contribution coming from $\Jac_f$ is taken into account. The
latter are the main ingredient of the change-of-variables formula in
motivic integration \cite{DL99} and are determined by the Nash blow-up of
the variety. Geometric properties of Mather discrepancies were investigated
in \cite{dFEI08}, and the recent work of Ishii \cite{Ish11} is devoted to a
study of singularities from the point of view of Mather discrepancies. When
$X$ is $\Q$-Gorenstein (that is, $X$ is normal and its canonical class is
$\Q$-Cartier), the relationship between Jacobian discrepancies, Mather
discrepancies and usual discrepancies is implicit in the works
\cite{Kaw08,EM09,Eis10}.

Like Mather discrepancies, Jacobian discrepancies can be read off from the
jet schemes of the variety. In the following result, one should notice that
not just the topology but also the scheme structure of the spaces of jets
is relevant to this end. 

\begin{thmB}
For every prime divisor $E \subset Y$ over a variety $X$ we have
\[
\kj EX + 1 = 2n(m+1) - \dim_\C \big( TX_m|_{\e_{E,m}}\big) \fall m \ge 2\ord_E(\Jac_X),
\]
where $X_m$ is the $m$-th jet scheme of $X$, $TX_m$ is the tangent space to
$X_m$, and $\e_{E,m} \in X_m$ is the generic point of the image of the set
of $m$-th jets in $Y$ having order of contact one with $E$. 
\end{thmB}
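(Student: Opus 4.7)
The plan is to compute the tangent space $TX_m|_{\e_{E,m}}$ via the standard identification with a $\Hom$-module of derivations, and then to extract its dimension from the exact sequence of Kähler differentials associated with $f$, using Fitting ideals.

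First, pick a sufficiently general $m$-jet $\tilde\gamma\in Y_m$ having contact order one along $E$, defined over the function field $K$ of that contact locus, and set $\gamma = f\o\tilde\gamma$. Then $\gamma\in X_m(K)$ is a lift of $\e_{E,m}$, so $\dim_\C\bigl(TX_m|_{\e_{E,m}}\bigr)=\dim_K T_\gamma X_m$. By the functor-of-points description of jet schemes one has
\[
T_\gamma X_m \;\cong\; \Hom_{R_m}\!\bigl(\gamma^*\Omega_X,\,R_m\bigr),\qquad R_m := K[t]/(t^{m+1}).
\]
Since $R_m$ is an Artinian Gorenstein $K$-algebra, the functor $\Hom_{R_m}(-,R_m)$ is Matlis self-dual on finitely generated modules, so $\dim_K T_\gamma X_m = \dim_K\gamma^*\Omega_X$ and the task reduces to computing the length of the pulled-back module $\gamma^*\Omega_X$.

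Next, apply $\tilde\gamma^*$ to the four-term exact sequence
\[
0\;\to\;\cK\;\to\;f^*\Omega_X\;\to\;\Omega_Y\;\to\;\Omega_{Y/X}\;\to\;0
\]
on $Y$, where $\cK$ denotes the torsion kernel, supported on the exceptional locus. Using the Fitting-ideal identity $\Jac_f = F_0(\Omega_{Y/X})$ together with the base-change property of Fitting ideals, one reads off $\dim_K\tilde\gamma^*\Omega_{Y/X}=\ord_E(\Jac_f)$ (once $m$ is large enough to capture this order), whereas $\tilde\gamma^*\Omega_Y\cong R_m^{\,n}$ has length $n(m+1)$. A parallel Fitting-ideal analysis of $\cK$, via the conormal sheaf of $X$ in a smooth ambient variety, identifies $\dim_K\tilde\gamma^*\cK$ with $\ord_E(\Jac_X)$; the bound $m\ge 2\ord_E(\Jac_X)$ enters precisely here, in order to guarantee that pulling the four-term sequence back acquires no spurious $\Tor^1$ contributions and hence supports a clean Euler-characteristic count for $\gamma^*\Omega_X$.

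Combining the resulting length identities with the definition $\kj EX = \ord_E(\Jac_f) - \ord_E(\Jac_X)$ rearranges into the claimed formula. The main technical obstacle is the analysis of $\cK$ and the vanishing of the relevant $\Tor^1$ terms under $\tilde\gamma^*$: this is what forces the precise bound $m\ge 2\ord_E(\Jac_X)$ and will be the bulk of the work.
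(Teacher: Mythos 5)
Your proof has a genuine gap at the very first step. You write $\dim_\C\bigl(TX_m|_{\e_{E,m}}\bigr) = \dim_K T_\gamma X_m$, but these are different quantities. The left side is the Krull dimension over $\C$ of the fiber scheme $\p_m^{-1}(\e_{E,m})$, whereas the right side is its dimension as a vector space over the residue field $K=\C(\e_{E,m})$. Since the fiber is an affine space of dimension $d:=\dim_K T_\gamma X_m$ over $K$, its dimension as a $\C$-scheme is $d + \operatorname{tr.deg}_\C K = d + \dim\bigl(\ff_m(W^1(E))\bigr)$. Your argument computes $d$ but says nothing about $\dim\bigl(\ff_m(W^1(E))\bigr)$, and supplying it is not a formality: it is precisely the other half of the paper's proof, namely the codimension formula for contact loci $\km EX + 1 = n(m+1) - \dim\bigl(\ff_m(W^1(E))\bigr)$ from \cite{dFEI08}, which rests on Lemma~3.4 of \cite{DL99}. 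The bound $m\ge 2\ord_E(\Jac_X)$ in the theorem is needed for that formula, not for the fiber computation (where $m\ge\ord_E(\Jac_X)$ already suffices).

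As for the fiber computation: the route via Matlis duality and the four-term sequence is a workable alternative to the paper's direct Smith-normal-form diagonalization of the Jacobian matrix (Proposition~\ref{p:jet-discr}), but two of your intermediate claims need revision. First, $\dim_K\tilde\gamma^*\Omega_{Y/X}$ need \emph{not} equal $\ord_E(\Jac_f)$ for $m\ge 2\ord_E(\Jac_X)$, since the elementary divisors of $\Omega_{Y/X}$ at the generic point $\eta_E$ of $E$ can be as large as $\ord_E(\Jac_f)=\km EX$, which is uncontrolled by $\ord_E(\Jac_X)$. Second, the $\Tor^1$ contributions do not vanish under your hypothesis; rather, they cancel: the image $Q$ of $f^*\Omega_X$ in $\Omega_Y$ is free at $\eta_E$ (a submodule of a free module over the DVR $\O_{Y,\eta_E}$), so splitting the four-term sequence at $Q$ gives $\dim_K\tilde\gamma^*Q = n(m+1)$ (the $\Tor_1$ of $\Omega_{Y/X}$ against $R_m$ has the same length as $\tilde\gamma^*\Omega_{Y/X}$, by length-duality for torsion modules over a DVR), and then $\dim_K\gamma^*\Omega_X = \dim_K\tilde\gamma^*\cK + n(m+1)$. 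For $\cK$ the bound is genuinely harmless: its elementary divisors $a_1\le\cdots\le a_e$ at $\eta_E$ satisfy $a_e\le\sum a_i=\ord_E(\Jac_X)\le m$, so $\dim_K\tilde\gamma^*\cK=\ord_E(\Jac_X)$. With these corrections your Euler-characteristic count does reproduce $\dim_K T_\gamma X_m = n(m+1)+\ord_E(\Jac_X)$, i.e.\ Proposition~\ref{p:jet-discr}, by a somewhat different route, but the theorem still requires the missing codimension input.
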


Using Jacobian discrepancies, one can formulate in a completely natural way
a framework for singularities that runs parallel to the usual theory of
singularities of $\Q$-Gorenstein varieties considered in the minimal model
program. In particular, this leads to the notions of \emph{J-canonical} and
\emph{log J-canonical singularities}, of \emph{minimal log J-discrepancy},
and of \emph{log J-canonical threshold}. This framework moves in a
different direction with respect to the one proposed in \cite{dFH09}, where
the asymptotic nature of the theory on $\Q$-Gorenstein varieties is instead
taken into account. These invariants capture interesting new geometry of
the singularities and we believe that they deserve investigation. 

The theory is tailored to satisfy adjunction and inversion of adjunction,
see Proposition~\ref{p:adj} and Theorem~\ref{t:inv-adj}. The latter,
independently proven also by Ishii in \cite{Ish11}, extends to the setting
considered here the main theorems of \cite{EMY03, EM04, Kaw08, EM09}, and
has a number of consequences that were previously obtained in \cite{Mus01,
EM04, dFEM10} for normal, locally complete intersection varieties. These
include  the semi-continuity of minimal log J-discrepancies (see
Corollaries~\ref{c:semi-contI} and~\ref{c:semi-contII}, see also
\cite{Ish11}) and the fact that the set of all log J-canonical thresholds
in any fixed dimension satisfies the ascending chain condition (see
Corollary~\ref{c:ACC}). Theorem~B and inversion of adjunction also yield
characterizations of the singularities of a variety that involve jet
schemes and their tangent spaces, see Corollary~\ref{c:jet-sing}.

\medskip

The main application of the above viewpoint on singularities regards the
Grauert--Riemen\-schneider canonical sheaf of a variety $X$, defined by
$f_*\om_Y$ where $f \colon Y \to X$ is any resolution of singularities
\cite{GR70}. This is the natural generalization of the canonical line
bundle of a manifold from the point of view of Kodaira vanishing and plays,
for instance, a central role in Lipman's proof to resolution of
singularities in dimension two \cite{Lip78}. This sheaf agrees with the
dualizing sheaf if $X$ has rational singularities. In general there is an
inclusion $f_*\om_Y \subset \om_X$ and our aim is to provide a measure of
the gap. 

To this end, we introduce the natural generalization of multiplier ideals
to the above framework. Given a normal variety $X$, for any non-zero ideal
$\fra \subset \O_X$ and every real number $c$ we use Jacobian discrepancies
to define the \emph{Jacobian multiplier ideal} $\MIj{\fra^c}$. This is in
general a fractional ideal sheaf; it is an ideal sheaf if either $c \ge 0$
or $\fra$ is trivial in codimension one. This agrees with the usual
multiplier ideal if $X$ is locally complete intersection.

The core result of the paper is the following formula which expresses the
colon ideal of $f_*\om_Y$ by $\om_X$ as a Jacobian multiplier ideal. 

\begin{thmC}
If $f \colon Y \to X$ is a resolution of singularities of a normal variety,
then
\[
\big( f_*\om_Y : \om_X \big) = \MIj{\lcid X^{-1}},
\]
where the left-hand side is the colon of the sheaves viewed as
$\O_X$-modules.
\end{thmC}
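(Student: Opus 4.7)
The plan is to translate both sides of the stated equality into computations on the resolution $Y$, using liaison to express $\om_X$ locally via an invertible canonical sheaf of a containing lci, and then using the chain rule for Jacobians to reconcile ordinary and Jacobian discrepancies. After possibly further blowing up, we may assume $f$ is such that $\Jac_X \cdot \O_Y$ and $\lcid X \cdot \O_Y = \O_Y(-Z)$ are both invertible. Unfolding the definition of the Jacobian multiplier ideal gives
\[
\MIj{\lcid X^{-1}} = f_*\O_Y(\Kj YX + Z),
\]
so the task is to identify this sheaf with $(f_*\om_Y : \om_X)$ as a fractional ideal of $\O_X$, both sheaves being regarded as subsheaves of the constant sheaf $\Omega^n_{k(X)/\C}$.

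Working locally, embed $X$ in a smooth variety $A$ and choose a reduced locally complete intersection $V \subset A$ of the same dimension as $X$ containing $X$. Writing $V = X \cup X'$ with $X'$ the residual subscheme and $\frc := I_{X', V} \cdot \O_X$, classical liaison theory yields $\om_X = \frc \cdot \ov\om_X$, where $\ov\om_X := \om_V \otimes_{\O_V} \O_X$ is an invertible sheaf on $X$. Adjunction for the lci $V$ gives $\om_Y = f^*\om_V \otimes \O_Y(K_{Y/V})$ with $K_{Y/V}$ the ordinary relative canonical, so $f^*\ov\om_X$ sits inside $\om_Y$ as $\om_Y \otimes \O_Y(-K_{Y/V})$. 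A direct order-counting argument along prime divisors of $Y$ yields $(f_*\om_Y : \ov\om_X) = f_*\O_Y(K_{Y/V})$, and applying the projection formula to the invertible ideal $\frc \cdot \O_Y = \O_Y(-Z_V)$ then gives
\[
(f_*\om_Y : \om_X) = f_*\O_Y(K_{Y/V} + Z_V),
\]
valid for every choice of $V$.

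It remains to match $f_*\O_Y(K_{Y/V} + Z_V)$ with $f_*\O_Y(\Kj YX + Z)$. By definition of $\lcid X$ as the sum $\sum_V \frc_V$, we have $Z = \inf_V Z_V$ in the divisor lattice of $Y$, and every prime divisor $E \subset Y$ admits some choice of $V$ realizing $\ord_E(Z_V) = \ord_E(Z)$. The chain rule for Jacobians applied to the factorization $Y \to V \subset A$ relates $\Jac_f$, $\Jac_V \cdot \O_Y$, $\Jac_X \cdot \O_Y$, and $\frc$, producing at each prime divisor $E$ the pointwise relation $\ord_E(K_{Y/V}) + \ord_E(Z_V) \geq \kj EX + \ord_E(Z)$, with equality achieved for the optimal $V$. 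Since the left-hand sheaf $(f_*\om_Y : \om_X)$ does not depend on $V$, taking the appropriate supremum over $V$ yields the desired equality of $\O_X$-modules.

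The main obstacle is this last step: relating the ordinary relative canonical $K_{Y/V}$ for the lci $V$ to the Jacobian discrepancy $\Kj YX$ via the linkage ideal $\frc$ requires a precise Fitting-ideal computation comparing $\Omega_{Y/X}$, $\Omega_X$, $\Omega_V$, and $\Omega_{Y/V}$, and one must verify that as $V$ varies the ideals $\frc_V$ collectively saturate $\lcid X$ at every divisorial center on $Y$ — which is exactly the defining property of $\lcid X$ as a sum over all admissible $V$.
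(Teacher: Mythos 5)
Your overall strategy --- push everything to $Y$, use liaison to write $\om_X = \frc_V\cdot\om_V|_X$ with $\om_V|_X$ invertible, and then compare the resulting divisors --- is broadly in the spirit of the paper's proof (which also reduces to the identity $\MIm{\Jac_V|_X\cdot\lcid{X,V}^{-1}} = \MIm{\Jac_X\cdot\lcid X^{-1}}$), but the crucial final step is left as a gap, and the way you propose to fill it is not how things actually work.

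Two specific problems. First, the formula $(f_*\om_Y : \om_X) = f_*\O_Y(K_{Y/V}+Z_V)$ via projection requires $\frc_V\cdot\O_Y$ to be invertible for the $V$ you use, i.e.\ it requires $f$ to factor through the Nash transformation $\~X$ of $X$ relative to $\om_X$. This is a stronger hypothesis than merely making $\lcid X\cdot\O_Y$ invertible, and you do not impose it. Second, and more importantly, the relation ``$\ord_E(K_{Y/V}) + \ord_E(Z_V) \ge \kj EX + \ord_E(Z)$ with equality for the optimal $V$'' is the wrong picture, and taking a supremum over $V$ is not a valid move (nothing in the construction produces a compatible family of inclusions into a common sheaf). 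What is actually true is much better: once $f$ factors through both the Nash blow-up $\^X$ and $\~X$, the factorization $\mom X\to\om_X\to\om_V|_X$ shows that $\Jac_V|_X\cdot\O_{\~X} = \frn\cdot(\lcid{X,V}\cdot\O_{\~X})$ for a single $V$-\emph{independent} ideal $\frn\subset\O_{\~X}$ (Proposition~\ref{p:indep-M}); pulling this back to $Y$ gives an \emph{exact} divisor identity $K_{Y/V}+Z_V = \Kj YX + Z$ for \emph{every} $V$. There is no inequality and no supremum. Pinning this down precisely is exactly the ``main obstacle'' you acknowledge, so as written the proposal does not close the argument.

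The paper takes a different route at this point. It deliberately works with a log resolution of $(X,\Jac_X\cdot\lcid X^{-1})$ that need \emph{not} factor through $\~X$, and writes $\Jac_V|_X\cdot\O_Y = \fra_V\cdot\O_Y(-A)$, $\lcid{X,V}\cdot\O_Y = \frb_V\cdot\O_Y(-B)$ for a \emph{general} $V$. Bertini gives transversality of the residual $X'$ with $X$ in codimension one, so $\fra_V$ and $\frb_V$ (both reduced at codimension-one points, trivial along exceptional divisors) agree in codimension one; local principality of $\fra_V$ (from the Nash blow-up) then gives $\frb_V\subset\fra_V$ with equality in codimension one, and Fujita's lemma yields $\MIu{\fra_V\cdot\frb_V^{-1}} = \O_Y$. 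Proposition~\ref{p:bir-transf} then finishes. This avoids any Fitting-ideal ``chain rule'' and any need to optimize over $V$: genericity plus a single multiplier-ideal triviality replaces the divisor matching you propose.
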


In light of this theorem and its `twisted' version (stated below in
Theorem~\ref{t:can(X,Z)}), one can view the Grauert--Riemenschneider
vanishing theorem and its generalizations as Nadel-type vanishings for
Jacobian multiplier ideals. 

More importantly, using Kempf's characterization of rational singularities
on normal Cohen--Macaulay varieties and the analogous result on Du Bois
singularities established in \cite{KSS10}, we deduce from Theorem~C that,
given any normal variety $X$, 
\begin{enumerate}
\item 
if $X$ has rational singularities then the pair $(X,\lcid X^{-1})$
is J-canonical, 
\item 
if $X$ has Du Bois singularities then the pair $(X,\lcid X^{-1})$ is
log J-canonical,
\end{enumerate}
and in both cases the converse holds if $X$ is Cohen--Macaulay. This result
appears in the main body of the paper as Corollary~\ref{c:rat-DB}. When $X$
is $\Q$-Gorenstein, this implies Theorem~A.

As mentioned before, the Cohen--Macaulay condition is necessary for the
converse to hold. If however one imposes stronger conditions on
discrepancies, then a simple argument brought out to our attention by
\Mustata shows that the results of \cite{Kaw98,KK10}, in combination with
inversion of adjunction, imply that any variety with J-canonical (resp.,
log J-canonical) singularities has rational (resp., Du Bois) singularities
(see Theorem~\ref{t:rat-DB-inv-adj}). This last result is in fact
well-known to the specialists if the hypotheses on the singularities are
expressed in terms of the jet schemes of the variety.

\medskip

We work over the field of complex numbers. Unless otherwise stated, we use
the word \emph{scheme} to refer to a separated scheme of finite type over
$\C$. The word \emph{variety} will refer to an irreducible reduced scheme.

\subsection*{Acknowledgments} 

We would like to thank Lawrence Ein, Shihoko Ishii and Mircea \Mustata for
useful comments on a preliminary version of this paper, and for letting us
know about their preprint \cite{EIM} where they independently define and
study the same notion of multiplier ideals as the one considered in this
paper. We are also grateful to Ein and \Mustata for mentioning to us about
the recent work of Ishii \cite{Ish11}, and to Ishii for bringing to our
attention the paper \cite{Eis10}. We would like to thank Karl Schwede for
useful comments, for explaining to us about Du Bois singularities in the
case of affine cones, and for directing us to the reference \cite{Wat83}.
We also thank Sebastien Boucksom for useful comments. We are grateful to
the referee for the careful reading and many comments and corrections. The
article \cite{EM09} has been of invaluable help in the preparation of this
paper. The paper was completed while the first author was visiting the
\'Ecole Normale Sup\'erieure, and he wishes to thank the institution and
Olivier Debarre for providing support and a stimulating environment.

\section{Nash blow-up}
\label{s:Nash}

The notions of discrepancy that will be defined in the paper are naturally
related to the Nash blow-up. In this section we review the basic theory of
this blow-up, and explore a modification of the construction, the blow-up
of the dualizing sheaf.

\subsection{Jacobian ideals}

The \emph{Jacobian ideal sheaf} of a reduced scheme $X$, denoted $\Jac_X
\subset \O_X$, is the smallest non-zero Fitting ideal of the cotangent
sheaf $\Om_X$. If $X$ is of pure dimension $n$, then $\Jac_X = \Fitt^n
(\Om_X)$, the $n$-th Fitting ideal.

If $Y$ is a smooth scheme and $f \colon Y \to X$ is a morphism to a scheme
$X$, the \emph{Jacobian ideal} of $f$ is the $0$-th Fitting ideal $\Jac_f =
\Fitt^0(\Om_{Y/X})$ of $\Om_{Y/X}$. If $Y$ is equidimensional of dimension
$n$, then $\Om_Y^n$ is invertible and the image of the map induced at the
level of top differentials $f^*\Om_X^n \to \Om_Y^n$ can be written as
$\Jac_f \otimes \Om_Y^n$.

\subsection{The classical Nash blow-up} 

The \emph{Nash blow-up} of a reduced scheme $X$ of pure dimension $n$ is a
surjective morphism
\[
\n \colon \^X \to X
\]
satisfying the following universal property: a proper birational morphism
of schemes $f\colon Y \to X$ factors through $\n$ if and only if the sheaf
$f^*\Om_X$ has a locally free quotient of rank $n$. In general, if a
resolution $f\colon Y \to X$ factors through the Nash blow-up of $X$, then
the associated Jacobian ideal $\Jac_f$ is locally principal.

The Nash blow-up of $X$ is unique up to isomorphism, and can be constructed
by taking the restriction of the projection $\Gr(\Om_X, n) \to X$ to the
closure $\^X \subset \Gr(\Om_X, n)$ of the natural isomorphism $X_{\rm reg}
\to \Gr(\Om_{X_{\rm reg}},n)$. Here, for a coherent sheaf $\cF$ on $X$, we
denote by $\Gr(\cF, n)$ the Grassmannian of locally free quotients of $\cF$
of rank $n$. If $X$ is embedded in a smooth variety $M$, the natural
quotient $\Om_M|_X \to \Om_X$ induces and inclusion $i\colon \Gr(\Om_X, n)
\hookrightarrow \Gr(\Om_M, n)$, and $\^X$ is the closure of the natural
embedding of $X_{\rm reg}$ in $\Gr(\Om_M, n)$ given by $i$. Alternatively,
using the Pl\"ucker embedding $\Gr(\Om_X,n) \subset \P(\Om_X^n)$, one can
also view the Nash blow-up $\^X$ inside $\P(\Om_X^n)$ as the closure of the
natural isomorphism $X_{\rm reg} \cong \P(\Om_{X_{\rm reg}}^n)$. Denoting
for short
\[
\mom X := \Om_X^n/{\rm torsion},
\]
$\^X$ can also be viewed as the closure of $X_{\reg}$ in $\P(\mom X)$ since
the latter is closed in $\P(\Om_X^n)$ and contains $\P(\Om_{X_{\rm reg}}^n)$.

\begin{rmk}
Since $X$ is reduced, $\om_X$ is torsion free (cf.\ Proposition~(2.8) of
\cite{AK70}) and the canonical map $\Om_X^n \to \om_X$ (cf.\
Proposition~9.1 of \cite{EM09}) factors through an inclusion $\mom X \inj
\om_X$.
\end{rmk}

Note that the tautological line bundle $\O_{\P(\Om_X^n)}(1)$ of
$\P(\Om_X^n)$ restricts to the tautological line bundle $\O_{\P(\mom
X)}(1)$ of $\P(\mom X)$. Following the terminology introduced in
\cite{dFEI08}, we refer to the restriction $\O_{\^X}(1) := \O_{\P(\mom
X)}(1)|_{\^X}$ the \emph{Mather canonical line bundle} of $X$, and to any
Cartier divisor $\^K_X$ on $\^X$ such that $\O_{\^X}(\^K_X) \cong
\O_{\^X}(1)$ a \emph{Mather canonical divisor} of $X$.

\begin{rmk}
The above terminology is motivated by the relationship with Mather-Chern
classes, see Remark~1.5 of \cite{dFEI08} for a discussion. Note that in
\cite{dFEI08} the symbol $\^K_X$ was used to denote the Mather canonical
line bundle.
\end{rmk}

\begin{rmk}
\label{r:O(1)}
If $X \subset X'$ is the inclusion between two reduced equidimensional
schemes of the same dimension and we denote by $\n \colon \^X \to X$ and
$\n' \colon \^X' \to X'$ the respective Nash blow-ups, then $\^X \subset
\^X'$, $\n = \n'|_{\^X}$ and $\O_{\^X}(1) = \O_{\^X'}(1)|_{\^X}$.
\end{rmk}

The Nash blow-up naturally relates to the blow-up of the Jacobian ideal of
the scheme. Lemma~1 of \cite{Lip69} implies that for any reduced
equidimensional scheme $X$, the blow-up of the Jacobian ideal of $X$
factors through the Nash blow-up of $X$; see also \cite{OZ91} for a
detailed discussion of this property. A direct computation in local
coordinates shows that when $X$ is locally complete intersection the Nash
blow-up of $X$ is isomorphic to the blow-up of the Jacobian ideal of $X$,
see \cite{Nob75,OZ91}. One can use Remark~\ref{r:O(1)} to see that in
general, if $X$ is a reduced equidimensional scheme and $V \supset X$ is a
reduced, locally complete intersection scheme of the same dimension, then
the Nash blow-up $\n \colon \^X \to X$ is isomorphic to the blow-up of the
ideal $\Jac_V|_X$. Since the natural map $\Om_X^n \to \om_V|_X$ has image
\[
\im\big[\Om_X^n \to \om_V|_X\big] = \Jac_V|_X \otimes \om_V|_X
\]
(see for instance \cite{OZ91,EM09}), this fact also follows from the
discussion given in the following subsection. 

\begin{prop}
\label{p:taut-bundles} 
Let $X$ be a reduced equidimensional scheme. Then, for every reduced,
locally complete intersection scheme $V \supset X$ of the same dimension,
there is a natural isomorphism
\[
\Jac_V|_X\.\O_{\^X} \cong \O_{\^X}(1) \otimes \n^* (\om_V^{-1}|_X).
\]
\end{prop}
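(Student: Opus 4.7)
My plan is to identify both sides of the claimed isomorphism as the pullback to $\^X$ of the tautological line bundle $\O_{\P(\mom X \otimes \om_V^{-1}|_X)}(1)$ via a common morphism, using the universal property of the projective bundle. I would begin from the identity recalled just before the proposition, namely that the image of the natural map $\Om_X^n \to \om_V|_X$ equals $\Jac_V|_X \otimes \om_V|_X$. Because $V$ is locally complete intersection the sheaf $\om_V|_X$ is a line bundle, hence torsion free, so this map factors through $\mom X = \Om_X^n/\text{torsion}$ and, after tensoring with $\om_V^{-1}|_X$, produces a surjection $\mom X \otimes \om_V^{-1}|_X \twoheadrightarrow \Jac_V|_X$ on $X$. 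Pulling back to $\^X$ yields
\[
\n^*(\mom X \otimes \om_V^{-1}|_X) \twoheadrightarrow \Jac_V|_X \. \O_{\^X},
\]
whose target is a line bundle since $\^X$ coincides with the blow-up of $\Jac_V|_X$ by the preceding remark.

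A second surjection onto a line bundle with the same source is obtained by twisting the tautological Nash quotient $\n^*\mom X \twoheadrightarrow \O_{\^X}(1)$ by $\n^*(\om_V^{-1}|_X)$, giving
\[
\n^*(\mom X \otimes \om_V^{-1}|_X) \twoheadrightarrow \O_{\^X}(1) \otimes \n^*(\om_V^{-1}|_X).
\]
By the universal property of $\P(\mom X \otimes \om_V^{-1}|_X)$, each of these two surjections determines a morphism $\^X \to \P(\mom X \otimes \om_V^{-1}|_X)$ whose pullback of $\O(1)$ recovers the target line bundle. If the two morphisms coincide, their pullbacks of $\O(1)$ must be isomorphic, giving the desired formula.

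To verify this coincidence, I would use the natural identification $\P(\mom X \otimes \om_V^{-1}|_X) \cong \P(\mom X)$ to translate both morphisms into morphisms $\^X \to \P(\mom X)$, and compare them with the defining Nash embedding on the dense open $U = X_{\reg} \cap V_{\reg}$ on which $\n$ is an isomorphism. Over $U$, both $\mom X$ and $\om_V|_X$ reduce to the canonical line bundle of $U$, the ideal $\Jac_V|_X$ becomes the unit ideal, and each surjection restricts to an identity between line bundles; hence both morphisms restrict to the Nash embedding over $U$, and by irreducibility of $\^X$ and separation of the target the agreement propagates globally. The main delicate point is precisely this verification on $U$: one has to track the twist by $\om_V^{-1}|_X$ through the identification of projective bundles and confirm that, after unwinding, each of the two surjections restricts over $U$ to the tautological Nash quotient. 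Once this local comparison is established, the rest of the argument is formal.
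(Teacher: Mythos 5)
Your argument is correct and uses essentially the same ingredients as the paper: the identification of the image of $\mom X \to \om_V|_X$ with $\Jac_V|_X \otimes \om_V|_X$, the fact that $\n\colon \^X \to X$ coincides with the blow-up of $\Jac_V|_X$, and the tautological/universal structure of projective bundles. The arrangement differs somewhat: the paper directly identifies $\P(\mom X)$ with $\P(\Jac_V|_X)$ via the isomorphism $\mom X \cong \Jac_V|_X \otimes \om_V|_X$, records how $\O(1)$ transforms under this twist by $\om_V|_X$, and then pulls back along $\^X \hookrightarrow \P(\Jac_V|_X)$; you instead produce two line-bundle quotients of $\n^*(\mom X \otimes \om_V^{-1}|_X)$ and compare the resulting morphisms to a single $\P$ by restriction to a dense open. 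Both are valid, though the paper's version sidesteps the density/separatedness step entirely. One small correction to your last paragraph: since $X$ is only assumed reduced and equidimensional, $\^X$ need not be irreducible; the agreement of the two morphisms should be deduced from reducedness of $\^X$ together with density of $U$ in every component of $\^X$, rather than from irreducibility.
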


\begin{proof}
We view $\n \colon \^X \to X$ as the blow-up of $\Jac_V|_X$. Since by
generic smoothness the kernel of the natural map $\Om_X^n \to \om_V|_X$ is
the torsion of $\Om_X^n$, there is an isomorphism 
\[
\P(\mom X) =
\P(\Jac_V|_X\otimes\,\om_V|_X) \stackrel{\a}\cong \P(\Jac_V|_X).
\] 
This implies that
\[
\O_{\P(\mom X)}(1) \cong \a^*\O_{\P(\Jac_V|_X)}(1) \otimes \p^*\om_V|_X
\]
where $\p \colon \P(\mom X) \to X$ is the projection map. On the other
hand, by the universal property of $\P(\Jac_V|_X)$, the blow-up $\^X \to X$
of $\Jac_V|_X$ factors through $\P(\Jac_V|_X) \to X$, and
$\O_{\P(\Jac_V|_X)}(1)$ pulls back to $\Jac_V|_X\.\,\O_{\^X}$. Since the
map $\^X \to \P(\Jac_V|_X)$ is an isomorphism onto its image, we obtain
$\O_{\^X}(1) \cong (\Jac_V|_X\.\O_{\^X}) \otimes \n^* \om_V|_X$. 
\end{proof}

\subsection{The blow-up of the dualizing sheaf}

Let $X$ be a reduced equidimensional scheme. We denote by $\C(X)$ the sheaf
of rational functions on $X$. Following the general definition given in
\cite{Kle79}, this sheaf is given by the push forward of the restriction of
$\O_X$ to the associated primes, which in our case are just the generic
points of the irreducible components of $X$.

In the construction of the Nash blow-up, one can replace the sheaf of
differentials by any coherent sheaf $\cF$ which is locally free of some
rank $r$ in an open dense subset of $X$. This idea was developed in detail
in \cite{OZ91}, where it received the name of \emph{Nash transformation of
$X$ relative to $\cF$}.

The main result from \cite{OZ91} that we will use is a description of the
ideal whose blow-up gives the Nash transformation: it is the image of
composition
\[
\wedge^r \cF 
\longrightarrow
\wedge^r \cF \otimes_{\O_X} \C(X) 
\stackrel\cong\longrightarrow
\C(X),
\]
where the second arrow is induced by any trivialization of $\wedge^r \cF$
at each of the generic points of $X$. Notice that different choices of
trivialization give different ideals, but they differ by a Cartier divisor,
so they give rise to the same blow-up. Also, for an arbitrary
trivialization the ideal is possibly fractional, but there is always a
choice that clears all the denominators.

For example, in the case of the classical Nash blow-up one is led to
consider the natural sequence 
\[
\mom X
\longrightarrow
\om_X
\longrightarrow
\om_V|_X
\longrightarrow
\Om^n_{\C(X)/\C},
\]
where $V$ is some reduced locally complete intersection $n$-dimensional
scheme containing $X$. Since $\om_V|_X$ is a line bundle, one can use it to
trivialize $\Om^n_{\C(X)/\C}$, and we see that the classical Nash blow-up
is given by the ideal $\fra$ for which the image of $\mom X$ in $\om_V|_X$
is $\fra \otimes \om_V|_X$, namely, the ideal $\Jac_V|_X$. 

For our purposes, it will be interesting to consider a second Nash
transformation, this time relative to the dualizing sheaf. It is a proper
birational morphism
\[
\m \colon \~X \longrightarrow X,
\]
which is universal among those proper birational morphisms for which the
pull back of $\om_X$ admits a line bundle quotient. Analogous to the case
of the classical Nash blow-up, $\~X$ can be constructed as the closure in
$\P(\om_X)$ of $X_{\rm reg} \simeq \P(\om_{X_{\rm reg}})$. 

Our interest in $\~X$ arises from the ideals whose blow-up gives $\m$.
After picking some locally complete intersection $V$ as above, one sees
that $\m$ is the blow-up of $X$ along the ideal $\lcid{X,V}$ for which the
image of the inclusion of $\om_X$ inside $\om_V|_X$ is given by
\[
\im\big[\om_X \to \om_V|_X\big] = \lcid{X,V} \otimes \om_V|_X.
\]
Note that $\m$ is an isomorphism if and only if $\om_X$ is invertible. One
implication is obvious, and conversely, if $\m$ is an isomorphism then
$\lcid{X,V}$ is locally principal and hence $\om_X = \lcid{X,V} \otimes
\om_V|_X$ is invertible.

Different choices of $V$ give different ideals $\lcid{X,V}$, but their
blow-up is always $\~X$. These ideals are easy to describe (cf.\
\cite{OZ91,EM09}). One embeds $V$ in a smooth variety $M$, and considers
the ideals $I_X$ and $I_V$ of $X$ and $V$ in $M$. Then, as $\O_V$-modules,
one has
\[
\om_X \otimes \om_V^{-1} =
\cH om_{\O_V}(\O_X, \O_V) = 
(I_V : I_X) / I_V,
\]
and therefore
\[
\lcid{X,V} = 
((I_V : I_X) + I_X)/ I_X.
\]
In other words, if we write $V = X \cup X'$, where $X'$ is the residual
part of $V$ with respect to $X$ (given by the ideal $(I_V:I_X)$), then
$\lcid{X,V}$ is the ideal defining the intersection $X \cap X'$ in $X$. We
may think of $\lcid{X,V}$ as giving the \emph{residual intersection} of $V$
with $X$.

As $V$ varies, so does the residual intersection $\lcid{X,V}$. Thinking of
this collection of intersections as a linear series in $X$, it is natural
to consider its base locus, which is clearly supported on the points where
$X$ is not locally complete intersection. This motivates the next
definition. 

\begin{defi}
The \emph{lci-defect ideal} of $X$ is defined to be
\[
\lcid X := \sum_{V} \lcid{X,V},
\]
where the sum is taken over all reduced, locally complete intersection
schemes $V \supset X$ of the same dimension. 
\end{defi}

In the case of the classical Nash blow-up, the analogous object is the
Jacobian ideal $\Jac_X$, which is spanned by the ideals $\Jac_V|_X$ as $V$
varies in any fixed embedding $M$ of $X$. If we restrict the sum in the
above definition to those schemes $V$ varying in one fixed embedding $M$,
the resulting ideal, which we denote by $\lcid{X/M}$, depends a priori on
the embedding. Its integral closure however does not depend on $M$.

\begin{prop}
\label{p:indep-M}
For any fixed embedding $X \subset M$, we have $\ov{\lcid {X/M}} =
\ov{\lcid X}$.
\end{prop}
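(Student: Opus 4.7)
The containment $\ov{\lcid{X/M}} \subseteq \ov{\lcid X}$ is immediate from the definitions, since $\lcid{X/M}$ is given by restricting the sum defining $\lcid X$ to those $V$ contained in $M$. For the reverse containment, the plan is to apply the valuative criterion for integral closure: it suffices to show that for every reduced lci $V \supset X$ of dimension $\dim X$ in any embedding, and every divisorial valuation $v$ of $\C(X)$ centered on $X$, one has $v(\lcid{X/M}) \leq v(\lcid{X,V})$.

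The first step is to reduce to the case of a closed embedding of smooth varieties. Given $V$ in a possibly different embedding $X \subset N$, consider the diagonal embedding $X \hookrightarrow M \times N$; both $M$ and $N$ sit inside $M \times N$ as closed smooth subvarieties containing $X$. For any closed embedding $M_1 \hookrightarrow M_2$ of smooth varieties with $X \subset M_1$, and any lci $W \subset M_1$ of dimension $\dim X$ containing $X$, the scheme $W$ remains lci of the same dimension when viewed inside $M_2$ (since $M_1$ is itself lci in $M_2$). Moreover, the dualizing sheaf $\om_W$ is intrinsic to $W$, so the ideal $\lcid{X,W}$, defined through the image of the inclusion $\om_X \hookrightarrow \om_W|_X$, is independent of the embedding. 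Hence $\lcid{X/M_1} \subseteq \lcid{X/M_2}$. Applied to both $M \hookrightarrow M \times N$ and $N \hookrightarrow M \times N$, this reduces the problem to proving that $\ov{\lcid{X/M'}} \subseteq \ov{\lcid{X/M}}$ whenever $M \subset M'$ is a closed embedding of smooth varieties with $X \subset M$.

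For this last case, given $V' \subset M'$ lci of dimension $\dim X$ containing $X$, the goal is to produce $V \subset M$ lci of the same dimension containing $X$ with $v(\lcid{X,V}) \leq v(\lcid{X,V'})$ for the prescribed valuation. My approach is to pass to the Nash transformation $\m \colon \~X \to X$ of $\om_X$, constructed as the closure of $X_{\reg}$ in $\P(\om_X)$. By an argument parallel to the proof of Proposition~\ref{p:taut-bundles}, one obtains an isomorphism $\lcid{X,V} \cdot \O_{\~X} \cong \O_{\~X}(1) \otimes \m^*(\om_V^{-1}|_X)$ for every lci $V$, where $\O_{\~X}(1)$ is the tautological line bundle on $\~X \subset \P(\om_X)$. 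Since integral closure is detected on $\~X$ (after pullback to a sufficiently high birational modification), the desired inequality on $v$ reduces to the statement that the invertible subsheaves $\m^*(\om_V|_X)$ arising from $V \subset M$ already realize, up to integral closure, every invertible subsheaf arising from arbitrary embeddings. The hardest step is the construction of a suitable $V \subset M$ dominating the given $V' \subset M'$; I expect to do this with a Bertini-type perturbation, deforming $V'$ within its local Hilbert scheme until $V' \cap M$ meets $X$ with the expected excess, and then taking $V \subset M$ to be the union of $X$ with the residual of $V' \cap M$ in $M$ with respect to $X$. Controlling both the lci property of $V$ and the dominance of $\m^*(\om_V|_X)$ over $\m^*(\om_{V'}|_X)$ throughout this deformation is the technical core of the argument.
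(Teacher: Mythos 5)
Your opening reduction and the passage to the Nash transformation $\m\colon \~X \to X$ of $\om_X$, together with the isomorphism $\lcid{X,V}\.\O_{\~X} \cong \O_{\~X}(1)\otimes\m^*(\om_V^{-1}|_X)$, match the paper's setup. But you then go in a direction that leaves a genuine gap: the ``technical core'' you describe --- constructing, for a given $V'\subset M'$, a dominating $V\subset M$ via a Bertini-type perturbation of $V'\cap M$ and a residual construction --- is not carried out, and it is far from clear it can be. The intersection $V'\cap M$ need not have the expected dimension, ``the residual of $V'\cap M$ in $M$ with respect to $X$'' is not a well-posed construction in general, and nothing in the proposal controls the lci property or the valuation $v$ through the deformation. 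As written, the key inequality $v(\lcid{X/M}) \le v(\lcid{X,V'})$ is asserted but not proved.

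The construction is in fact unnecessary. Once you are on $\~X$, the paper's observation is that the map $\m^*\mom X \to \O_{\~X}(1)$ has image $\frn\otimes\O_{\~X}(1)$ for an ideal $\frn\subset\O_{\~X}$ that is \emph{intrinsic} to $X$ (it involves only $\mom X$, $\om_X$, and the tautological quotient on $\~X$, no embedding). Factoring $\m^*\mom X \to \m^*(\om_V|_X) \to \O_{\~X}(1)$ and using $\im[\m^*\mom X\to\m^*(\om_V|_X)] = (\Jac_V|_X\.\O_{\~X})\otimes\m^*(\om_V|_X)$ together with your isomorphism gives
\[
\frn\.\big(\lcid{X,V}\.\O_{\~X}\big) = \Jac_V|_X\.\O_{\~X}.
\]
Summing over all $V\subset M$ and using $\sum_{V\subset M}\Jac_V|_X = \Jac_X$ yields $\frn\.\big(\lcid{X/M}\.\O_{\~X}\big) = \Jac_X\.\O_{\~X}$. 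Since $\frn$ and $\Jac_X$ are independent of $M$, for any divisorial valuation $v$ centered on $\~X$ one gets $v(\lcid{X/M}) = v(\Jac_X) - v(\frn)$, hence $\ov{\lcid{X/M}}$ does not depend on $M$. You should replace the dominating-$V$ construction with this comparison against $\Jac_X$ through the intrinsic ideal $\frn$.
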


\begin{proof}
Since 
\[
\lcid X = \sum_{M \supset X} \lcid{X/M},
\]
it suffices to prove that the integral closure of $\lcid{X/M}$ is
independent of the embedding. Fix any embedding $X \subset M$. If
$\O_{\~X}(1)$ denotes the tautological quotient of $\m^*\om_X$ associated
to the Nash transformation, then 
\[
\im\big[\m^*\mom X \to \m^*\om_X \to \O_{\~X}(1)\big] 
= \frn\otimes \O_{\~X}(1)
\]
for some ideal $\frn \subset \O_{\~X}$. On the other hand, we have
\[
\im\big[\m^*\mom X \to \m^*\om_V|_X\big] 
= (\Jac_V|_X\.\O_{\~X})\otimes \m^*(\om_V|_X).
\]
By the same arguments of the proof of Proposition~\ref{p:taut-bundles}
there is a natural isomorphism 
\[
(\lcid{X,V}\.\O_{\~X})\otimes \m^*(\om_V|_X) \cong \O_{\~X}(1),
\]
and we see that $\frn\.\big(\lcid{X,V}\.\O_{\~X}\big) =
\Jac_V|_X\.\O_{\~X}$. By taking the sum as $V$ varies, we obtain
$\frn\.\big(\lcid {X/M}\.\O_{\~X}\big) = \Jac_X\.\O_{\~X}$. This proves
that the integral closure of $\lcid {X/M}\.\O_{\~X}$ is independent of $M$. 
\end{proof}

\begin{rmk}
For the purpose of this paper, the integral closure of $\lcid X$ is the
only thing we need to control, and thus with slight abuse of notation one
can always pretend that $\lcid X$ is determined from any embedding of $X$. 
\end{rmk}

\subsection{The \QQ-Gorenstein case}

When $X$ is $\Q$-Gorenstein, one can exploit this property to give another
scheme structure to the locus where $X$ is not locally complete
intersection, as it is explained in \cite{EM09}. We review here this
alternative theory, as it will be useful later to compare the new notions
of discrepancy with the classical one.

Recall that a variety $X$ is said to be \emph{$\Q$-Gorenstein} if it is
normal and its canonical class $K_X$ is $\Q$-Cartier. Assume $X$ is
$\Q$-Gorenstein, and let $r$ be a positive integer such that $rK_X$ is
Cartier. For any reduced, locally complete intersection scheme $V \supset
X$ of the same dimension, we consider the ideal $\lcid{r,X,V} \subset \O_X$
such that the image of the natural map from $\O_X(rK_X)$ to
$(\om_V|_X)^{\otimes r}$ is given by 
\[
\im\big[\O_X(rK_X) \to (\om_V|_X)^{\otimes r}\big] 
= \lcid{r,X,V}\otimes (\om_V|_X)^{\otimes r}.
\]
Note that $\lcid{r,X,V}$ is a locally principal ideal since $\O_X(rK_X)$ is
a line bundle. 

\begin{defi}
With the above assumptions, we define the \emph{lci-defect ideal of level
$r$} of $X$ to be
\[
\lcid {r,X} := \sum_{V} \lcid{r,X,V},
\]
where the sum is taken over all $V$ as above.
\end{defi}

Note also that, like $\lcid X$, the ideal $\lcid {r,X}$ vanishes exactly
where $X$ is not locally complete intersection. If we fix an embedding $X
\subset M$ then the ideal $\lcid{r,X/M}$ obtained by restricting the sum in
the above definition to schemes $V \subset M$ depends a priori on the
embedding. 

To better understand these ideals, for every $V$ as above we consider the
natural sequence
\begin{equation}
\label{eq:om^r}
\^\om^{\otimes r}_X
\longrightarrow
\om^{\otimes r}_X
\longrightarrow
\O_X(rK_X)
\longrightarrow
(\om_V|_X)^{\otimes r}.
\end{equation}
Using again the fact that $\O_X(rK_X)$ is a line bundle, we see that the
image of the canonical map from $\^\om_X^{\otimes r}$ to $\O_X(rK_X)$ is
given by 
\[
\im\big[\^\om_X^{\otimes r} \to \O_X(rK_X)\big] =
\nash{r,X} \otimes \O_X(rK_X)
\]
for some ideal sheaf $\nash{r,X} \subset \O_X$. 

\begin{defi}
In accordance with the terminology introduced in \cite{EM09}, we call
$\nash{r,X}$ the \emph{Nash ideal of level $r$} of X. 
\end{defi}

\begin{prop}
\label{p:lcid_r-jac^r}
With the above notation, there are inclusions
\[
\nash{r,X} \cdot \lcid{r,X/M} \subset 
\nash{r,X} \cdot \lcid{r,X} \subset \Jac_X^r
\]
which induce identities on integral closures. In particular, for every
embedding $X \subset M$ we have $\ov{\lcid{r,X/M}} = \ov{\lcid{r,X}}$.
\end{prop}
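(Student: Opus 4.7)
The plan is to reduce everything to a single local identity
\[
\nash{r,X} \cdot \lcid{r,X,V} = \Jac_V^r|_X
\]
holding for every reduced locally complete intersection $V \supset X$ of the same dimension, and then sum over $V$. To prove this identity I would compute the image of the composition in sequence~\eqref{eq:om^r} extended one step further to $(\om_V|_X)^{\otimes r}$. On one hand, by definition the image of $\^\om_X^{\otimes r}$ inside $\O_X(rK_X)$ is $\nash{r,X} \otimes \O_X(rK_X)$, and the subsequent map to $(\om_V|_X)^{\otimes r}$ multiplies by $\lcid{r,X,V}$, so the total image equals $\nash{r,X}\cdot\lcid{r,X,V} \otimes (\om_V|_X)^{\otimes r}$. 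On the other hand, since the map $\^\om_X \to \om_V|_X$ has image $\Jac_V|_X \otimes \om_V|_X$ (as recalled before Proposition~\ref{p:taut-bundles}), and since $\om_V|_X$ is a line bundle, a computation in a local trivialization shows that tensoring this map $r$ times has image exactly $\Jac_V^r|_X \otimes (\om_V|_X)^{\otimes r}$. Comparing the two descriptions and cancelling the invertible factor yields the identity.

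Summing over $V \subset M$ gives $\nash{r,X} \cdot \lcid{r,X/M} = \sum_{V \subset M} \Jac_V^r|_X$, and summing over all $V$ gives $\nash{r,X} \cdot \lcid{r,X} = \sum_V \Jac_V^r|_X$. Since $\Jac_X$ is spanned by the ideals $\Jac_V|_X$ for any fixed ambient $M$, we have $\Jac_V^r|_X \subset \Jac_X^r$ for every such $V$, so both sums lie in $\Jac_X^r$, proving the chain of inclusions in the statement. To upgrade these to equalities on integral closures it suffices to prove $\ov{\nash{r,X} \cdot \lcid{r,X/M}} = \ov{\Jac_X^r}$, since the middle ideal is sandwiched between them. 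The nontrivial containment follows from the elementary valuative inequality
\[
I_1 \cdots I_r \subseteq \ov{I_1^r + \cdots + I_r^r},
\]
valid because for every divisorial valuation $v$ one has $v(I_1 \cdots I_r) = \sum_i v(I_i) \ge r\min_i v(I_i) = v(I_1^r+\cdots+I_r^r)$. Applied to $I_i = \Jac_{V_i}|_X$ with $V_i \subset M$ and combined with $\Jac_X = \sum_{V \subset M} \Jac_V|_X$, this gives $\Jac_X^r \subset \ov{\sum_{V \subset M} \Jac_V^r|_X}$, closing the loop.

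Finally, the ``in particular'' statement follows by cancelling $\nash{r,X}$ valuatively: having established $\ov{\nash{r,X}\cdot\lcid{r,X/M}} = \ov{\nash{r,X}\cdot\lcid{r,X}}$, for every divisorial valuation $v$ on the (normal) variety $X$ we have $v(\nash{r,X}) + v(\lcid{r,X/M}) = v(\nash{r,X}) + v(\lcid{r,X})$, and since $\nash{r,X}$ is a nonzero ideal the term $v(\nash{r,X})$ is finite and can be subtracted, yielding $v(\lcid{r,X/M}) = v(\lcid{r,X})$ for all $v$, hence $\ov{\lcid{r,X/M}} = \ov{\lcid{r,X}}$. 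The step I expect to require most care is the tensor-power image computation in the first paragraph: one must verify that the image of $\^\om_X^{\otimes r} \to (\om_V|_X)^{\otimes r}$ is \emph{exactly} $\Jac_V^r|_X \otimes (\om_V|_X)^{\otimes r}$, which is what allows one to conclude equality rather than mere containment between $\nash{r,X}\cdot\lcid{r,X,V}$ and $\Jac_V^r|_X$; this is where the fact that $\om_V|_X$ is locally free of rank one is essential.
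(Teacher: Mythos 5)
Your argument is correct and follows the same strategy as the paper's own proof: establish the key identity $\nash{r,X}\cdot\lcid{r,X,V} = (\Jac_V|_X)^r$ for each $V$ (via the image of $\^\om_X^{\otimes r}$ in $(\om_V|_X)^{\otimes r}$), sum over $V$ to obtain the chain of inclusions, pass to integral closures, and cancel $\nash{r,X}$ valuatively to get the \emph{in particular}. The paper simply asserts that both inclusions become equalities on integral closures, whereas you supply the explicit valuative justification via $\Jac_X^r \subseteq \ov{\sum_{V\subset M}(\Jac_V|_X)^r}$ using $\Jac_X = \sum_{V\subset M}\Jac_V|_X$; this is a welcome clarification of what is left implicit in the paper.
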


\begin{proof}
Since the image of $\^\om_X$ in $\om_V|_X$ is given by $\Jac_V|_X$, we have
that $\nash{r,X} \cdot \lcid{r,X,V} = (\Jac_V|_X)^r$, and hence
\[
\sum_{V \subset M} (\Jac_V|_X)^r = \nash{r,X} \cdot \lcid{r,X/M} \subset
\nash{r,X} \cdot \lcid{r,X} = \sum_{V} (\Jac_V|_X)^r \subset \Jac_X^r,
\]
and both inclusions become equality once integral closures are taken. The
last assertion is a direct consequence of the first part. 
\end{proof}

\begin{rmk}
\label{r:lcid-r-frb}
With the above notation, the ideal $\lcid{r,X}$ has the same integral
closure as the colon ideals $J'_r := \(\, \Jac_X^r : \nash{r,X} \,\)$ and
$J_r := \(\, \ov{\Jac_X^r} : \nash{r,X} \,\)$. First notice that there is a
chain of inclusions $\lcid{r,X} \subset J'_r \subset J_r$, the first one
following by Proposition~\ref{p:lcid_r-jac^r} and the latter by the
inclusion $\Jac_X^r$ in its integral closure. Therefore it suffices to
check that $\ov{\lcid{r,X}} = \ov{J_r}$. This follows by combining
Proposition~\ref{p:lcid_r-jac^r}, which gives $\ov{\nash{r,X}\.\lcid{r,X}}
= \ov{\Jac_X^r}$, with Proposition~9.4 of \cite{EM09}, which gives
$\ov{\nash{r,X}\.J_r} = \ov{\Jac_X^r}$. In \cite{EM09}, the ideal $J_r$ is
chosen as a measure of the locus where $X$ is not locally complete
intersection, and the scheme it defines is called the \emph{non-lci
subscheme of level $r$} of $X$. For the purpose of this paper, there is no
significant difference between the two ideals as we only need to take into
consideration the valuative contribution of these ideals, which are
equivalent. 
\end{rmk}

\begin{prop}
\label{p:lcid-r}
With the above notation, we have $\lcid X^r \subset \ov{\lcid{r,X}}$.
\end{prop}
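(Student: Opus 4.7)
The plan is to first establish the pointwise-in-$V$ inclusion $\lcid{X,V}^r \subset \lcid{r,X,V}$ for every reduced lci scheme $V \supset X$ of the same dimension, and then pass from this to the desired $\lcid X^r \subset \ov{\lcid{r,X}}$ by a valuative argument that exploits the definitions of both ideals as sums indexed by such $V$.

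For the pointwise step, I would tensor the natural map $\om_X \to \om_V|_X$ with itself $r$ times and factor the resulting arrow $\om_X^{\otimes r} \to (\om_V|_X)^{\otimes r}$ through the reflexive hull of $\om_X^{\otimes r}$. Since $X$ is normal and $rK_X$ is Cartier, the double dual $(\om_X^{\otimes r})^{**} = \O_X(rK_X)$ is a line bundle; since $(\om_V|_X)^{\otimes r}$ is itself a line bundle and hence reflexive, the universal property of the double dual for maps from torsion-free sheaves to reflexive sheaves on a normal variety yields a factorization
\[
\om_X^{\otimes r} \longrightarrow \O_X(rK_X) \longrightarrow (\om_V|_X)^{\otimes r},
\]
in which the second arrow has image $\lcid{r,X,V} \otimes (\om_V|_X)^{\otimes r}$ by the definition of $\lcid{r,X,V}$, while the composition has image $\lcid{X,V}^r \otimes (\om_V|_X)^{\otimes r}$ by construction. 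Writing the image of the first arrow as $J \otimes \O_X(rK_X)$ for some ideal $J \subset \O_X$, a direct comparison of images yields $\lcid{X,V}^r = J \cdot \lcid{r,X,V} \subset \lcid{r,X,V}$.

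For the valuative step, I would use the valuative criterion for integral closure on $X$. For any divisorial valuation $v$ on $X$, the identity $v(I_1 + I_2) = \min\bigl(v(I_1), v(I_2)\bigr)$ extends to arbitrary sums of ideals, so $v(\lcid X) = \min_V v(\lcid{X,V})$ and $v(\lcid{r,X}) = \min_V v(\lcid{r,X,V})$. Picking $V_0$ achieving the first minimum and applying the pointwise inclusion at $V_0$ gives
\[
v(\lcid{r,X}) \leq v(\lcid{r,X,V_0}) \leq v(\lcid{X,V_0}^r) = r \cdot v(\lcid X) = v(\lcid X^r).
\]
Since this holds for every divisorial valuation $v$, the valuative criterion yields the claimed inclusion $\lcid X^r \subset \ov{\lcid{r,X}}$.

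The main technical subtlety lies in the pointwise step, namely in verifying that the tensor power map genuinely factors through $\O_X(rK_X)$ and in tracking how images behave under this factorization. This rests on the torsion-freeness of $\om_X^{\otimes r}$ (from normality) together with the universal property of the reflexive hull. Once the factorization is in place, the rest is bookkeeping, and integral closure enters naturally at the valuative stage to compensate for the fact that the single-$V$ inclusion does not directly bound the cross terms in the expansion of $\lcid X^r = \bigl(\sum_V \lcid{X,V}\bigr)^r$.
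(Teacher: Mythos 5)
Your proposal is correct and follows essentially the same approach as the paper. The pointwise inclusion $\lcid{X,V}^r \subset \lcid{r,X,V}$ via the factorization $\om_X^{\otimes r} \to \O_X(rK_X) \to (\om_V|_X)^{\otimes r}$ is exactly the sequence~\eqref{eq:om^r} that the paper uses (your reflexive-hull justification is a valid reconstruction of why that sequence exists), and your valuative argument for passing from the pointwise inclusion to $\lcid X^r \subset \ov{\lcid{r,X}}$ is precisely the content behind the paper's closing observation that $\lcid X^r$ lies in the integral closure of $\sum_V \lcid{X,V}^r$, which in turn is contained in $\lcid{r,X}$.
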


\begin{proof}
By the definitions and the sequence \eqref{eq:om^r} we have an inclusion
$\lcid{X,V}^r \subset \lcid{r,X,V}$, and hence $\sum _V \lcid{X,V}^r
\subset \lcid{r,X}$. One concludes by observing that $\lcid X^r$ is
contained in the integral closure of $\sum _V \lcid{X,V}^r$. 
\end{proof}

\begin{rmk}\label{r:lcid-1}
If $\om_X$ is invertible then we can take $r=1$, and it follows from the
definitions that $\lcid X = \lcid{1,X}$. In general however the inclusion
$\ov{\lcid X^r} \subset \ov{\lcid{r,X}}$ might be strict (cf.\
Remark~\ref{r:strict} below).  
\end{rmk}

\section{Discrepancies}
\label{s:discr}

In this section we introduce the main invariants that we will study
throughout the paper.

\subsection{Mather and Jacobian discrepancies}

We focus on the case of a reduced equidimensional scheme $X$. To fix
terminology, by a \emph{resolution} of a reduced equidimensional scheme $X$
we intend a morphism $f \colon Y \to X$ from a smooth scheme $Y$ that
restricts to a proper birational map over each irreducible component of $X$
and such that every irreducible component of $Y$ dominates  an irreducible
component of $X$. In particular, if $f \colon Y \to X$ is a resolution and
$X$ has irreducible components $X_i$, then $Y$ decomposes as a disjoint
union of smooth varieties $Y_i$ and $f$ restricts to resolutions $f_i
\colon Y_i \to X_i$.

We say that $E$ is a prime divisor \emph{over} $X$ if $E$ is a prime
divisor on some resolution $f\colon Y \to X$, and that it is
\emph{exceptional} if $f$ is not an isomorphism at the generic point of
$E$. The \emph{center} $c_X(E)$ of $E$ in $X$ is the generic point of the
image of $E$ in $X$. If $Y_i$ is the component of $Y$ containing $E$ and
$f_i \colon Y_i \to X_i$ is the induced resolution of the corresponding
irreducible component of $X$, then associated to $E$ one defines the
divisorial valuation $\ord_E$ on the function field $\C(X_i)$: for any
nonzero element $\f \in \C(X_i)$, $\ord_E(\f)$ is the order of vanishing
(or pole) of $f_i^*(\f)$ at the generic point of $E \subset Y_i$. 

If $\fra \subset \O_X$ is an ideal sheaf on $X$, then we denote
$\ord_E(\fra) := \ord_E(\fra|_{X_i})$. Equivalently, we have $\ord_E(\fra)
= \ord_E(\fra\.\O_Y)$ where the right-hand-side denotes the integer $a$ for
which the image of $\fra\.\O_Y$ in the discrete valuation ring $\O_{Y,E}$
is the $a$-th power of the maximal ideal. Note that $\ord_E(\fra) = \infty$
if and only if $\fra$ vanishes identically along the irreducible component
$X_i$ of $X$ dominated by the component of $Y$ containing $E$.

\begin{defi}
Let $X$ be a reduced equidimensional scheme. Given a resolution $f \colon Y
\to X$, if $E$ is a prime divisor over $X$, the \emph{Mather discrepancy}
and the \emph{Jacobian discrepancy} of $E$ over $X$ are respectively
defined by
\[
\km EX := \ord_E(\Jac_f)
\and
\kj EX := \ord_E(\Jac_f) - \ord_E(\Jac_X).
\]
The \emph{relative Mather canonical divisor} and the \emph{relative
Jacobian canonical divisor} of $f$ are, respectively, 
\[
\Km YX := \sum_{E \subset Y} \km EX\. E \and
\Kj YX := \sum_{E \subset Y} \kj EX\. E,
\]
where the sum runs over all prime divisors $E$ on $Y$. 
\end{defi}

\begin{rmk}
If $X$ is smooth, then $\km EX = \kj EX = \ku EX$, the usual discrepancy of
$E$ over $X$, and $\Km YX = \Kj YX = \Ku YX$, the usual relative canonical
divisor.
\end{rmk}

If $X$ is a reduced equidimensional scheme and $f\colon Y \to X$ is a
resolution factoring through the Nash blow-up of $X$, then
\[
\Jac_f\.\O_Y = \O_Y(- \Km YX),
\]
and $\Km YX$ is the unique $f$-exceptional divisor linearly equivalent to
$K_Y - \^f^*\^K_X$ for a choice of a canonical divisor $K_Y$ on $Y$ and of
a Mather canonical divisor $\^K_X$ (see Proposition~1.7 of \cite{dFEI08}).
Furthermore, if $f \colon Y \to X$ factors through the blow-up of $\Jac_X$,
and we write $\Jac_X \.\O_Y = \O_Y(-B)$ for some effective divisor $B$ on
$Y$, then
\[
\Kj YX = \Km YX - B.
\]
One deduces the following property. 

\begin{prop}
\label{p:K-compos}
Let $f \colon Y \to X$ be a resolution of a reduced equidimensional scheme
$X$ that factors through the blow-up of the Jacobian ideal\/ $\Jac_X$. If
$f'\colon Y' \to X$ is another resolution factoring through $f$ via a
morphism $h \colon Y' \to Y$, then 
\[ 
\Km {Y'}X = \Ku {Y'}Y + h^*\Km YX \and
\Kj {Y'}X = \Ku {Y'}Y + h^*\Kj YX.
\] 
\end{prop}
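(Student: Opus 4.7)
The plan is to first establish the Jacobian chain-rule identity
\[
\Jac_{f'} = (\Jac_f\.\O_{Y'})\.\Jac_h \inside \O_{Y'},
\]
and then translate this equality of invertible ideals into a divisorial identity that yields both formulas simultaneously. Since $f$ factors through the blow-up of $\Jac_X$, which dominates the Nash blow-up of $X$ by Lemma~1 of \cite{Lip69}, both $\Jac_f\.\O_Y = \O_Y(-\Km YX)$ and $\Jac_X\.\O_Y = \O_Y(-B)$ are invertible, with $\Kj YX = \Km YX - B$ as recalled immediately before the proposition. Likewise $f' = f\o h$ factors through these same blow-ups, so $\Jac_{f'}\.\O_{Y'} = \O_{Y'}(-\Km{Y'}X)$ and $\Jac_X\.\O_{Y'} = \O_{Y'}(-h^*B)$.

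To prove the chain rule, I would use the description
$\im[g^*\Om_Z^n \to \Om_W^n] = \Jac_g \otimes \Om_W^n$
recalled at the beginning of Section~\ref{s:Nash}, applied to each of $f$, $h$, and $f'$. Pulling the surjection $f^*\Om_X^n \surj \Jac_f\.\Om_Y^n$ back by $h$ and then composing with $h^*\Om_Y^n \to \Om_{Y'}^n$ (whose image is $\Jac_h\.\Om_{Y'}^n$), right exactness of pullback together with the invertibility of $\Om_{Y'}^n$ gives
\[
\Jac_{f'}\.\Om_{Y'}^n \ = \ (\Jac_f\.\O_{Y'})\.\Jac_h\.\Om_{Y'}^n,
\]
from which the Jacobian chain rule follows after cancelling the invertible factor $\Om_{Y'}^n$.

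Combining the chain rule with the divisorial expressions above and with $\Jac_h\.\O_{Y'} = \O_{Y'}(-\Ku{Y'}Y)$ (valid because both $Y$ and $Y'$ are smooth), the identity
\[
\O_{Y'}(-\Km{Y'}X) \ = \ \O_{Y'}(-h^*\Km YX)\.\O_{Y'}(-\Ku{Y'}Y)
\]
immediately produces $\Km{Y'}X = h^*\Km YX + \Ku{Y'}Y$, which is the first claim. Subtracting $h^*B$ from both sides and invoking $\Kj YX = \Km YX - B$ on $Y$ together with the analogous relation $\Kj{Y'}X = \Km{Y'}X - h^*B$ on $Y'$ yields the second identity. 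The only point that requires genuine care is the Jacobian chain rule itself, since pullback is only right exact in general; the invertibility of the top differentials on $Y$ and $Y'$ is precisely what makes the image computation go through cleanly.
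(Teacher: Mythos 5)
Your argument is correct and supplies the proof the paper omits (the paper simply says ``one deduces the following property''). The Jacobian chain rule $\Jac_{f'} = (\Jac_f\.\O_{Y'})\.\Jac_h$ is indeed the natural mechanism: your computation of the image of $f'^*\Om_X^n \to \Om_{Y'}^n$ via the intermediate sheaf $h^*\Om_Y^n$ is valid, and the one point worth stressing (which you flag) is that it is really the invertibility of $\Jac_f\.\O_Y$ --- guaranteed by the hypothesis that $f$ factors through the blow-up of $\Jac_X$, hence through the Nash blow-up --- that makes the pulled-back inclusion $h^*(\Jac_f\.\Om_Y^n)\to h^*\Om_Y^n$ have image $(\Jac_f\.\O_{Y'})\otimes h^*\Om_Y^n$, since pullback of a non-locally-free subsheaf need not behave this way. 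An alternative route the paper implicitly has available is the characterization recalled just before the statement: $\Km YX$ is the unique $f$-exceptional divisor with $\Km YX \sim K_Y - \^f^*\^K_X$. Applying this to $f'=f\circ h$ and comparing $K_{Y'}-h^*\^f^*\^K_X$ with $K_{Y'/Y}+h^*(K_Y-\^f^*\^K_X)$ gives the first identity after checking that $\Ku{Y'}Y+h^*\Km YX$ is $f'$-exceptional; the Jacobian formula then follows as you argue by subtracting $h^*B$. Both routes are essentially equivalent; yours has the virtue of working directly with the ideals and making explicit where the Nash-blow-up hypothesis enters.
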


\subsection{Discrepancies over \QQ-Gorenstein varieties}

Suppose that $X$ a $\Q$-Gorenstein variety. In this case one defines the
relative canonical divisor of a resolution $f \colon Y \to X$ as the
$\Q$-divisor $\Ku YX= K_Y - f^*K_X$ where $K_Y$ is a canonical divisor on
$Y$, $K_X = f_*K_Y$, and $f^*K_X$ is defined as the pull-back of a
$\Q$-Cartier divisor. If $E$ is a prime divisor on $Y$, we denote by $\ku
EX := \ord_E(\Ku YX)$. This is the usual discrepancy of $E$ over $X$.

The relation between Mather and Jacobian discrepancies and the usual
discrepancies defined for this class of varieties is implicit in the works
\cite{Kaw08,EM09,Eis10}, and is made explicit in the following statement. 

\begin{prop}
\label{p:Q-Gor}
Let $X$ be a $\Q$-Gorenstein variety. Let $r$ be any positive integer such
that $rK_X$ is Cartier, let $\nash{r,X}$ be the Nash ideal of level $r$ of
$X$, and let $\lcid{r,X}$ be the lci-defect ideal of level $r$ of $X$. Then
for every prime divisor $E$ over $X$ we have
\[
\km EX = \ku EX + \tfrac 1r \ord_E(\nash{r,X}) \and
\kj EX = \ku EX - \tfrac 1r \ord_E(\lcid{r,X}).
\]
\end{prop}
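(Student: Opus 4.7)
The plan is to verify both equalities at the generic point of $E$ after pulling back to a sufficiently good resolution, and to deduce the Jacobian formula from the Mather formula via Proposition~\ref{p:lcid_r-jac^r}. First I will choose a resolution $f \colon Y \to X$ on which $E$ appears, factoring through the Nash blow-up of $X$ (so that $\Jac_f \. \O_Y = \O_Y(-\Km YX)$ is invertible), through the blow-up of $\Jac_X$, and through principalizations of the ideals $\nash{r,X}$ and $\lcid{r,X}$. Since every invariant in the statement depends only on the divisorial valuation $\ord_E$, such a choice is harmless. I will then fix a rational top form $\omega_0$ on $Y$ whose divisor is the chosen canonical $K_Y$, set $k_Y := \ord_E(K_Y)$, and, via the birational identification $K(X)=K(Y)$, view $\omega_0$ simultaneously as a rational section of $\om_X$ with divisor $K_X = f_*K_Y$; this embeds $\om_X$, $\om_Y$, $\mom X$, and $\O_X(rK_X)$ into the corresponding constant sheaves of rational top forms and lets me compare their pullbacks to $Y$ at the stalk $\O_{Y,E}$.

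For the Mather formula I will compute the image of the natural map $f^*\mom X^{\otimes r} \to \om_Y^{\otimes r}$ at $\O_{Y,E}$ in two ways. On the one hand, $\Om_Y^n$ is torsion free, so the torsion of $\Om_X^n$ maps to zero in it, and the map $f^*\Om_X^n \to \Om_Y^n$ factors through $f^*\mom X$ with the same image $\Jac_f \otimes \Om_Y^n$; taking $r$-th tensor powers, the image of $f^*\mom X^{\otimes r}$ in $\om_Y^{\otimes r}$ is $\Jac_f^r \otimes \om_Y^{\otimes r}$, which equals $\pi^{r\km EX - rk_Y} \. \O_{Y,E} \. \omega_0^{\otimes r}$ (with $\pi$ a uniformizer at $E$). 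On the other hand, pulling back the defining surjection $\mom X^{\otimes r} \twoheadrightarrow \nash{r,X} \otimes \O_X(rK_X)$ gives image $(\nash{r,X} \. \O_Y) \otimes f^*\O_X(rK_X)$; since $rK_X$ is Cartier, a local generator of $\O_X(rK_X)$ near $f(E)$ is a rational $r$-form whose pullback has divisor with $\ord_E$ equal to $-r\ord_E(f^*K_X) = r\ku EX - rk_Y$, and so this image equals $\pi^{\ord_E(\nash{r,X}) + r\ku EX - rk_Y} \. \O_{Y,E} \. \omega_0^{\otimes r}$. Both descriptions record the same sub-$\O_{Y,E}$-module of the constant sheaf of rational $r$-forms, because the two maps extend the identity on the generic point of $X$. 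Equating exponents yields $r\km EX = r\ku EX + \ord_E(\nash{r,X})$, which is the Mather formula.

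For the Jacobian formula, I will combine the definition $\kj EX = \km EX - \ord_E(\Jac_X)$ with the Mather formula to obtain
\[
\kj EX = \ku EX + \tfrac{1}{r}\ord_E(\nash{r,X}) - \ord_E(\Jac_X).
\]
By Proposition~\ref{p:lcid_r-jac^r}, the ideals $\nash{r,X} \. \lcid{r,X}$ and $\Jac_X^r$ have the same integral closure, and divisorial valuations are invariant under integral closure, so $\ord_E(\nash{r,X}) + \ord_E(\lcid{r,X}) = r\ord_E(\Jac_X)$; substituting gives the Jacobian formula. The main obstacle will be the comparison in the Mather step: the point is that after localizing at $\O_{Y,E}$ both maps restrict to the identity at the generic fibre, hence determine the same sub-fractional-module of the ambient constant sheaf of rational top $r$-forms, which is what forces the two exponent computations to coincide.
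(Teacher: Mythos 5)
Your proposal is correct and follows essentially the same strategy as the paper's proof: compute the image of $f^*\^\om_X^{\otimes r}$ (equivalently of $f^*(\Om_X^n)^{\otimes r}$, since the torsion is killed by any map to a line bundle) in two ways — once via the Jacobian of $f$, giving $\Jac_f^r$, and once via the pullback of $\O_X(rK_X)$, giving $\nash{r,X}\.\O_Y$ together with the relative canonical contribution — then pass to the second identity using Proposition~\ref{p:lcid_r-jac^r} and invariance of divisorial valuations under integral closure. The paper packages the same comparison as a short diagram of sheaf maps $f^*(\Om_X^n)^{\otimes r}\to\O_Y(f^*(rK_X))\to\O_Y(rK_Y+D)\cong(\Om_Y^n)^{\otimes r}\otimes\O_Y(D)$, using an auxiliary effective divisor $D$ to absorb the possibly non-effective $rK_{Y/X}$, whereas you fix a rational top form $\omega_0$ and carry out the computation as a matching of exponents in $\O_{Y,E}$ inside the constant sheaf of rational $r$-forms; both handle the same subtlety (that the two maps agree because they restrict to the identity at the generic point) and arrive at $\Jac_f^r=\nash{r,X}\.\O_Y(-rK_{Y/X})$, equivalently $r\km EX=r\ku EX+\ord_E(\nash{r,X})$.
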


\begin{proof}
Let $f \colon Y \to X$ be a log resolution of $X$ such that $E$ is a
divisor on $Y$ and both $\nash{r,X}\.\O_Y$ and $\Jac_f$ are locally
principal. Fix a canonical divisor $K_Y$ on $Y$ such that $f_*K_Y = K_X$,
and let $D$ be an effective divisor on $Y$ such that $rK_{Y/X} + D \ge 0$.
Consider the composition $\g$ of maps
\[
f^*(\Om_X^n)^{\otimes r} \stackrel\a\longrightarrow
\O_Y(f^*(rK_X)) 
\stackrel\b\longrightarrow
\O_Y(rK_Y+D)
\stackrel\cong\longrightarrow
(\Om_Y^n)^{\otimes r} \otimes \O_Y(D),
\]
where $\b$ is is induced by a global section of $\O_Y(rK_{Y/X} + D)$.
The maps $\a$, $\b$ and $\g$ have images, respectively, 
\begin{align*}
\im(\a) &= (\nash{r,X}\.\O_Y) \otimes \O_Y(f^*(rK_X))\\
\im(\b) &= \O_Y(-rK_{Y/X} - D)\otimes\big((\Om_Y^n)^{\otimes r} \otimes \O_Y(D)\big)\\
\im(\g) &= \big(\Jac_f^r \otimes\O_Y(- D)\big)
\otimes\big((\Om_Y^n)^{\otimes r} \otimes \O_Y(D)\big). 
\end{align*}
By comparing images, we see that 
\[
\Jac_f^r = \nash{r,X} \.\O_Y(-rK_{Y/X}).
\] 
Since the ideals $\Jac_X^r$ and $\nash{r,X} \. \lcid{r,X}$ have the same
integral closure (see Proposition~\ref{p:lcid_r-jac^r}), we conclude that
\begin{align*}
\ord_E(\Jac_f)
&= \ord_E(\Ku YX) + \tfrac 1r \ord_E(\nash{r,X})\\
&= \ord_E(\Ku YX) + \ord_E(\Jac_X) - \tfrac 1r \ord_E(\lcid{r,X}),
\end{align*}
and both formulas follow. 
\end{proof}

\begin{cor}
\label{c:lci-K_Y/X}
If $X$ is a locally complete intersection variety and $f\colon Y \to X$ is
a resolution of singularities, then $\Kj YX = \Ku YX$.
\end{cor}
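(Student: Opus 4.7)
The plan is to reduce the statement to Proposition~\ref{p:Q-Gor}, which for a $\Q$-Gorenstein variety $X$ with $rK_X$ Cartier gives the identity $\kj EX = \ku EX - \tfrac{1}{r}\ord_E(\lcid{r,X})$ for every prime divisor $E$ over $X$. Since a locally complete intersection variety is Gorenstein, its dualizing sheaf $\om_X$ is already invertible, so one may take $r=1$; and by Remark~\ref{r:lcid-1} we then have $\lcid{1,X} = \lcid X$.

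It therefore suffices to verify that $\lcid X = \O_X$ whenever $X$ is locally complete intersection. For this I would simply pick $V = X$ in the sum $\lcid X = \sum_V \lcid{X,V}$ defining the lci-defect ideal: the scheme $X$ is itself a reduced locally complete intersection scheme of the right dimension containing $X$, so it is an admissible choice. For this choice the natural map $\om_X \to \om_V|_X$ is the identity on $\om_X$, so its image is all of $\om_V|_X$, and hence $\lcid{X,X} = \O_X$. In particular $\lcid X \supset \lcid{X,X} = \O_X$, so $\lcid X = \O_X$. (Equivalently, from the ideal-theoretic description $\lcid{X,V} = ((I_V : I_X) + I_X)/I_X$, taking $V = X$ gives $(I_X : I_X) = \O_M$ and hence $\lcid{X,X} = \O_X$.)

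Combining these observations, $\ord_E(\lcid X) = 0$ for every prime divisor $E$ over $X$, and Proposition~\ref{p:Q-Gor} then forces $\kj EX = \ku EX$ for every such $E$. Summing over the prime divisors appearing on the resolution $f \colon Y \to X$ yields $\Kj YX = \Ku YX$. I do not anticipate any genuine obstacle here, since the result is essentially a direct specialization of Proposition~\ref{p:Q-Gor} to the case in which the lci-defect ideal becomes trivial; the only point that requires a moment of verification is the triviality of $\lcid X$ in the lci setting, which is handled above.
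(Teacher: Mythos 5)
Your proof is correct and follows essentially the same route as the paper's one-line proof, which simply invokes Proposition~\ref{p:Q-Gor} together with the observation that $\lcid{r,X}$ is trivial when $X$ is locally complete intersection. The small detour you take through $r=1$ and Remark~\ref{r:lcid-1} is harmless but unnecessary: the same argument (taking $V=X$, which is admissible since $X$ is itself a reduced lci scheme of the right dimension) shows directly that $\lcid{r,X,X}=\O_X$ for any valid $r$, so $\lcid{r,X}=\O_X$ without needing to pass to $\lcid X$.
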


\begin{proof}
By Proposition~\ref{p:Q-Gor}, since $\lcid{r,X}$ is trivial if $X$ is
locally complete intersection.
\end{proof}

\section{Singularities}
\label{s:sing}

This section is devoted to the study of singularities of pairs from the
point of view of Jacobian discrepancies. We refer to \cite{KM98} for an
introduction to singularities of pairs in the usual setting.

\subsection{Pairs and singularities}

Throughout the paper, a \emph{pair} $(X,\frA)$ will always consists of a
reduced equidimensional scheme $X$ and a \emph{proper $\R$-ideal} $\frA =
\prod_k \fra^{c_k}$ of $X$, namely, a finite formal product, with real
exponents $c_k$, of ideal sheaves $\fra_k \subset \O_X$ such that
$\fra_k|_{X_i} \ne (0)$ on every irreducible component $X_i$ of $X$. The
$\R$-ideal $\frA$, and the pair itself, are said to be \emph{effective} if
$c_k \ge 0$ for all $k$. They are said to be \emph{effective in codimension
one} if $c_k \ge 0$ for all $k$ such that $\dim Z(\fra_k) = \dim X - 1$,
where in general $Z(\fra) \subset X$ denotes the subscheme defined by an
ideal sheaf $\fra \subset \O_X$. The \emph{vanishing locus} of $\frA$ is
the union of the supports of the $Z(\fra_k)$. 

For any map $g \colon X' \to X$ we denote $\frA\.\O_{X'} := \prod_k
(\fra_k\.\O_{X'})^{c_k}$. If $g$ is an inclusion, then we also denote
$\frA|_{X'} := \frA\.\O_{X'}$. For any real number $\l$, we denote $\frA^\l
:= \prod_k \fra_k^{\l c_k}$. If $E$ is a prime divisor over $X$, then we
denote $\ord_E(\frA) := \sum_k c_k \ord_E(\fra_k)$.

\begin{defi}
Let $(X,\frA)$ be a pair, and let $E$ be a prime divisor over $X$. The
\emph{log Jacobian discrepancy} (or \emph{log J-discrepancy}) of $E$ over
$(X,\frA)$ is the number
\[
\aj E{X,\frA} := \kj EX + 1 - \ord_E(\frA).
\]
The pair $(X,\frA)$ is said to be \emph{log J-canonical} (resp., \emph{log
J-terminal}) if $\aj E{X,\frA} \ge 0$ (resp., $\aj E{X,\frA} > 0$) for all
prime divisors $E$ over $X$. The pair $(X,\frA)$ is said to be
\emph{J-canonical} (resp., \emph{J-terminal}) if $\aj E{X,\frA} \ge 1$
(resp., $\aj E{X,\frA} > 1$) for all exceptional prime divisors $E$ over
$X$. If $\frA = \O_X$, then we just drop it from the notation. In
particular, we say that $X$ is \emph{J-canonical} or \emph{log J-canonical}
if so is the pair $(X,\O_X)$. 
\end{defi}

\begin{rmk}
If one defines the \emph{log Mather discrepancy} of $E$ over $(X,Z)$ to be
$\am E{X,\frA} := \km EX + 1 - \ord_E(\frA)$, then $\aj E{X,\frA} = \am
E{X,\frA\.\Jac_X}$. This invariant is studied in \cite{Ish11}.
\end{rmk}

\begin{defi}
Let $(X,\frA)$ be a pair. If $(X,\frA)$ is log J-canonical, then the
\emph{log J-canonical threshold} of $(X,\frA)$ is 
\[
\jlct{\frA} := 
\sup \{\,\l\ge 0 \mid \text{$(X,\frA^\l)$ is log J-canonical}\,\} 
\in [0,\infty].
\]
Here we set $\jlct{\frA} = 0$ if $\fra_k|_{X_i} = (0)$ for some $i$ and
some $k$ such that $c_k > 0$. For any Grothendieck point $\e \in X$, the
\emph{minimal log J-discrepancy} of $(X,T)$ at $\e$ is
\[
\jmld \e{X,\frA} := \inf_{c_X(E) =\e} \aj E{X,\frA}.
\]
If $\e_{X_i}$ is the generic point of an irreducible component $X_i$ of
$X$, then we set by definition $\jmld {\e_{X_i}}{X,\frA} =0$. If $T \subset
X$ is a closed set, then we denote
\[
\jmld T{X,\frA} := \inf_{\e \in T} \jmld \e{X,\frA}.
\]
\end{defi}

\begin{rmk}
If $X$ is locally complete intersection, then the above invariants agree
with the usual analogous invariants: $\aj E{X,\frA} = \au E{X,\frA}$,
$\jlct{X,\frA} = \lct{X,\frA}$, and $\jmld T{X,\frA} = \mld T{X,\frA}$, the
usual log discrepancy, log canonical threshold, and minimal log
discrepancy.
\end{rmk}

\begin{rmk}
Let $(X,\frA)$ be any pair as above. Let $X' \to X$ be the normalization,
and let $\cond X := \cHom_{\O_X}(\O_{X'},\O_X)$ be the conductor ideal;
this is the largest ideal sheaf in $\O_X$ which is also an ideal sheaf in
$\O_{X'}$. By Theorem~2.4 of \cite{Yas07}, there is an inclusion $\Jac_X
\subset \cond X$, and hence for every prime divisor $E$ over $X$ one has
\[
\am E{X,\frA\.\Jac_X} \le \am E{X',\frA'\.\cond X}
\]
where $\frA' := \frA\.\O_{X'}$. In the special case where $X$ is a simple
normal crossing divisor in a smooth variety $M$ of dimension $n+1$, we
actually have an equality $\Jac_X = \cond X$, and thus if $X_i$ is the
irreducible component of $X$ over which $E$ lies then
\[
\aj E{X,\frA} = \au E{X_i,\frA|_{X_i}\.\cond X|_{X_i}}.
\]
\end{rmk}

\begin{defi}
A \emph{log resolution} of a pair $(X,\frA)$ consists of a log resolution
$f \colon Y \to X$ of $X$ such that $\fra_k\.\O_Y$ is locally principal for
every $k$ and the union of their vanishing loci, together with the
vanishing locus of $\Jac_X\.\,\O_Y$ and the exceptional locus $\Ex(f)$,
form a simple normal crossing divisor. 
\end{defi}

The existence of log resolutions follows from Hironaka's resolution of
singularities \cite{Hir64}. Note that the above definition of log
resolution differs slightly from the usual one (it is more restrictive) in
that we also impose conditions on the pull-back of the Jacobian ideal of
$X$. Note that, according to our definition, every log resolution factors
through the blow-up of the Jacobian ideal of $X$, and thus through the Nash
blow-up of $X$.

Minimal log J-discrepancies of a pair $(X,\frA)$ are trivial at the generic
point of $X$ and are easy to compute at points of codimension one.  Note
also that since the normalization of $X$ gives a log resolution of
$(X,\frA)$ in codimension one, minimal log J-discrepancies in codimension
one are computed on any log resolution of the pair.

\begin{prop}
\label{p:Jmld}
Let $(X,\frA)$ be a pair as above, and let $T \subset X$ be a closed subset
of codimension $\ge 1$.
\begin{enumerate}
\item
If $(X,\frA)$ is log J-canonical in a neighborhood of $T$, then $\jmld
T{X,\frA}$ is realized on any given log resolution $f \colon Y \to X$ of
$(X,\frA\.\I_T)$ as the log J-discrepancy $\aj E{X,\frA}$ of some
prime divisor $E$ on $Y$.
\item
If $(X,\frA)$ is not log J-canonical in any neighborhood of $T$ and
$\dim X \ge 2$, then $\jmld T{X,\frA} = - \infty$.
\end{enumerate}
\end{prop}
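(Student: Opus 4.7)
The plan is to derive a single master formula comparing log J-discrepancies across log resolutions, from which both parts follow by short arguments (for (a)) or iterated blow-ups (for (b)). I would start by fixing a log resolution $f \colon Y \to X$ of $(X, \frA \cdot \I_T)$ and letting $E_1, E_2, \dots$ enumerate the prime divisors of $Y$ in the SNC divisor supplied by the resolution (which supports $\Jac_X \cdot \O_Y$, $\frA \cdot \O_Y$, and $\I_T \cdot \O_Y$). Let $J$ be the subset of indices with $E_j \subset f^{-1}(T)$, so $f^{-1}(T) = \bigcup_{j \in J} E_j$. For any further log resolution $h \colon Y' \to Y$ and any prime divisor $E^*$ on $Y'$, Proposition~\ref{p:K-compos} gives $\Kj{Y'}X = K_{Y'/Y} + h^*\Kj YX$, and combining this with the decomposition of $\ord_{E^*}(\frA)$ in terms of the $E_i$ yields the master formula
\[
\aj{E^*}{X,\frA} \;=\; \alpha(E^*) \;+\; \sum_i \ord_{E^*}(E_i) \cdot \aj{E_i}{X,\frA},
\]
where $\alpha(E^*) := 1 + \ord_{E^*}(K_{Y'/Y}) - \sum_i \ord_{E^*}(E_i)$. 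Since $\alpha(E^*)$ equals the ordinary log discrepancy $\au{E^*}{Y, \sum_i E_i}$ on the smooth variety $Y$ with respect to an SNC boundary of coefficient one, one has $\alpha(E^*) \geq 0$ for every $E^*$.

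For part (a), I would take any $E^*$ with $c_X(E^*) \in T$, pick $j_0 \in J$ with $c_Y(E^*) \in E_{j_0}$ (so $\ord_{E^*}(E_{j_0}) \geq 1$), and use the hypothesis to show every summand is nonnegative. Specifically, for every $i$ with $\ord_{E^*}(E_i) > 0$, the image $f(E_i)$ contains $c_X(E^*) \in T$ and hence meets the log J-canonical neighborhood $U$ of $T$; restricting $E_i$ to a divisor over $(U, \frA|_U)$ yields $\aj{E_i}{X,\frA} \geq 0$. The master formula then gives
\[
\aj{E^*}{X,\frA} \;\geq\; \aj{E_{j_0}}{X,\frA} \;\geq\; \min_{j \in J}\aj{E_j}{X,\frA},
\]
and together with the evident opposite inequality (each $E_j$ with $j \in J$ has $c_X(E_j) \in T$) this shows the minimum is attained as $\aj{E_j}{X,\frA}$ for some $j \in J$.

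For part (b), I would first use the master formula to reformulate the hypothesis as the existence of an index $i_0$ with $\aj{E_{i_0}}{X,\frA} < 0$ and $E_{i_0} \cap f^{-1}(T) \neq \emptyset$: the formula forces any $E^*$ with $\aj{E^*}{X,\frA} < 0$ to have center in $\bigcup_{i:\,\aj{E_i}{X,\frA} < 0} f(E_i)$, and failure of log J-canonicity in every neighborhood of $T$ forces this closed locus to meet $T$. I would then run an iterated codimension-two blow-up: choose a generic smooth center $Z_1 \subset E_{i_0}$ if $E_{i_0} \subset f^{-1}(T)$, and otherwise $Z_1 \subset E_{i_0} \cap E_{j_1}$ for some $j_1 \in J$ with $E_{i_0} \cap E_{j_1} \neq \emptyset$. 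Blow up $Z_1$ to obtain $F_1$ and inductively blow up $F_k \cap \tilde E_{i_0}$ to obtain $F_{k+1}$. The master formula applied at each stage, with $\alpha = 0$ since two transverse SNC divisors are intersected, gives the recursion $\aj{F_{k+1}}{X,\frA} = \aj{E_{i_0}}{X,\frA} + \aj{F_k}{X,\frA}$, so $\aj{F_k}{X,\frA}$ grows linearly in $k$ with slope $\aj{E_{i_0}}{X,\frA} < 0$ and tends to $-\infty$. All $F_k$ have $X$-center inside $f(Z_1) \subset T$, yielding $\jmld T{X,\frA} = -\infty$. The hypothesis $\dim X \geq 2$ enters precisely to secure the existence of the codimension-two smooth blow-up centers.

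The main technical obstacle I anticipate is the subcase $E_{i_0} \not\subset f^{-1}(T)$ in part (b): even though $E_{i_0}$ witnesses the failure of log J-canonicity, its center only meets $T$ without being contained in it, so a preliminary blow-up along the intersection $E_{i_0} \cap E_{j_1}$ with a divisor $E_{j_1}$ truly over $T$ is required to manufacture a starting divisor whose center actually lies in $T$ before the iteration can proceed.
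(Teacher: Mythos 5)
Your proof is correct and is exactly the classical argument that the paper alludes to with its one-line proof (``Using Proposition~\ref{p:K-compos}, the proof is the same as the one of the analogous properties for usual minimal log discrepancies on $\Q$-Gorenstein varieties''): the master formula you derive from $\Kj{Y'}X = K_{Y'/Y} + h^*\Kj YX$ is the standard comparison across log resolutions, the nonnegativity of $\alpha$ is log canonicity of the SNC pair $(Y,\sum E_i)$, and the iterated codimension-two blow-ups in part~(b) are the usual device for driving the log discrepancy to $-\infty$. (The only cosmetic slip is the phrase ``$\alpha = 0$'' in part~(b): for the first blow-up in the case $E_{i_0}\subset f^{-1}(T)$ with a generic center inside just $E_{i_0}$, one has $\alpha(F_1)=1$, but this only shifts the initial value of the affine recursion and does not affect the conclusion that $\aj{F_k}{X,\frA}\to-\infty$.)
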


\begin{proof}
Using Proposition~\ref{p:K-compos}, the proof is the same as the one of the
analogous properties for usual minimal log discrepancies on $\Q$-Gorenstein
varieties.
\end{proof}

\subsection{Adjunction}

Let $X$ be a reduced equidimensional scheme, embedded in a smooth variety
$M$. Let $\frA$ be a proper $\R$-ideal on $M$ such that $\frA|_X$ is a
proper $\R$-ideal on $X$. We denote by $\Jac_{X,M} \subset \O_M$ the ideal
defining the scheme $Z(\Jac_X)$ viewed as a subscheme of $M$. We will use
the following version of embedded resolution. 

\begin{defi}
An \emph{embedded log resolution}
of $(X,\frA|_X)$ in $(M,\frA)$ consists of a log resolution $g \colon N
\to M$ of $(M,\frA\.\Jac_{X,M})$ satisfying the following properties:
\begin{enumerate}
\item[(i)]
$g$ is is an isomorphism at the generic point of each irreducible component
$X_i$ of $X$;
\item[(ii)]
the restriction of $g$ to the proper transform $Y$ of $X$ gives a log
resolution $f \colon Y \to X$ of $(X,\frA|_X)$;
\item[(iii)]
$Y$ is transverse to the simple normal crossing divisor given by the
union of $\Ex(g)$ and the vanishing locus of $(\frA\.\Jac_{X,M})\.\O_N$. 
\end{enumerate}
If $\frA = \O_X$, then we just say that $f$ is an \emph{embedded log
resolution} of $X$ in $M$. An embedded log resolution is said to be
\emph{factorizing} if, furthermore, we have $\I_X\.\O_N = \I_Y\.\O_N(-G)$
where $G$ is a divisor on $N$. 
\end{defi}

The existence of factorizing resolutions is established in \cite{BV03}. The
following adjunction formula holds. For the proof we refer to Lemma~4.4 of
\cite{Eis10}.

\begin{prop}
\label{p:adj}
Let $X$ be a reduced subscheme of pure codimension $e$ of a smooth variety
$M$. Let $g \colon N \to M$ be a factorizing embedded log resolution of $X$
in $M$, and write $\I_X\.\,\O_N = \I_Y \.\,\O_N(-G)$, where $Y$ is the
proper transform of $X$ and $G$ is a divisor in $N$ supported on the
exceptional locus. Then
\[
\Kj YX = \( \Ku NM - e G\)|_Y.
\]
\end{prop}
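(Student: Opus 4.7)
The plan is to derive the identity from the local ideal equation
\[
(\Jac_g \cdot \O_Y) \cdot (\Jac_X \cdot \O_Y) \;=\; \O_Y(-eG|_Y) \cdot \Jac_f
\]
on $Y$, which I will call $(\ast)$. Each of the four ideals in $(\ast)$ is principal on $Y$: $\Jac_g \cdot \O_Y = \O_Y(-\Ku NM|_Y)$ and $\Jac_X \cdot \O_Y = \O_Y(-B)$ by the log-resolution hypotheses on $g$, and $\Jac_f = \O_Y(-\Km YX)$ since every log resolution factors through the Nash blow-up. Reading off exponents in $(\ast)$ then gives $\Ku NM|_Y + B = eG|_Y + \Km YX$, i.e.\ $\Km YX - B = (\Ku NM - eG)|_Y$, which is exactly $\Kj YX = (\Ku NM - eG)|_Y$ by the definition of Jacobian discrepancy.

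For $(\ast)$, I would work at the generic point of an arbitrary prime divisor $E \subset Y$. Using the snc and transversality hypotheses, I choose local coordinates $y_1, \ldots, y_{n+e}$ on $N$ with $\I_Y = (y_{n+1}, \ldots, y_{n+e})$ and such that every component of $\Ex(g) \cup V(\Jac_{X,M} \cdot \O_N) \cup \Supp(G)$ is a coordinate hyperplane. The factorizing hypothesis $\I_X \cdot \O_N = \I_Y \cdot \O_N(-G)$ then produces, for any local generators $F_1, \ldots, F_s$ of $\I_X \subset \O_M$, expressions
\[
f_i := g^* F_i = h \cdot \sum_{l=1}^{e} u_{il}\, y_{n+l}, \qquad i = 1, \ldots, s,
\]
where $h$ is a local equation of $G$ and $u_{il} \in \O_N$. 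Since $h$ is a non-zero-divisor, the elements $v_i := \sum_l u_{il} y_{n+l}$ generate $\I_Y$, so their classes $\bar v_i$ generate $\I_Y/\I_Y^2 \cong \O_Y^e$. Consequently the $s \times e$ matrix $U' := (u_{il}|_Y)$ has the unit ideal as its ideal of $e \times e$ minors.

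Applying the chain rule to $f_i = F_i \circ g$ and the Leibniz rule to $f_i = h v_i$, then restricting to $Y$ (where $v_i|_Y = 0$), yields the matrix identity
\[
A|_Y \cdot J_g^T|_Y \;=\; h|_Y \cdot \bigl[\, 0_{s \times n} \mid U' \,\bigr],
\]
with $A := ((\partial F_i/\partial z_j) \circ g)$ the pulled-back $s \times (n+e)$ Jacobian of the generators of $\I_X$ and $J_g := (\partial g_j/\partial y_k)$ the full $(n+e) \times (n+e)$ Jacobian of $g$. Right-multiplying by $\adj(J_g^T|_Y)$, extracting the $e \times e$ minor on rows $I \subset \{1, \ldots, s\}$ and columns $J \subset \{1, \ldots, n+e\}$, and applying Jacobi's complementary-minor identity then gives
\[
\det(A|_Y[I, J]) \cdot \det(J_g|_Y) \;=\; \pm\, h|_Y^{\,e} \cdot \det(U'[I, \cdot\,]) \cdot m_{\ov J},
\]
where $m_\sigma$ is the $n \times n$ minor on column set $\sigma$ of the matrix $M_f := ((\partial g_j/\partial y_k)|_Y)_{k \le n,\, j \le n+e}$ and $\ov J$ is the complement of $J$ in $\{1, \ldots, n+e\}$. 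Letting $(I, J)$ range over all admissible pairs, and using that the $e \times e$ minors of $U'$ generate $\O_Y$ to absorb the $\det(U'[I, \cdot])$ factors, converts the displayed equation into $(\ast)$. The main obstacle is arranging Jacobi's identity so that the $n \times n$ minors of $M_f$ arise as the complementary minors of $e \times e$ submatrices of $\adj(J_g^T|_Y)$; once this matching is in place, the rest is routine bookkeeping.
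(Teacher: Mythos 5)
The paper does not give a proof of this proposition; it simply cites Lemma~4.4 of Eisenstein's preprint. Your argument supplies a self-contained direct computation, very much in the spirit of the local Jacobian calculations used in that reference and in Ein--\Mustata's survey. The reduction to the ideal equation $(\ast)$ is exactly right: reading off exponents along a prime divisor $E$, $(\ast)$ says $\ord_E(K_{N/M}|_Y) + \ord_E(\Jac_X) = e\,\ord_E(G|_Y) + \km EX$, which rearranges to $\kj EX = \ord_E\big((K_{N/M}-eG)|_Y\big)$, and all four ideals are indeed principal on $Y$ under the log-resolution hypotheses (in particular $\Jac_f = \O_Y(-\Km YX)$ because every log resolution factors through the Nash blow-up). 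The core identity
\[
\det(J_g|_Y)\,\det\!\big(A|_Y[I,J]\big)\;=\;\pm\, h|_Y^{\,e}\,\det\!\big(U'[I,\cdot]\big)\,\det\!\big(M_f[\cdot,\ov J]\big)
\]
is correct: it follows from right-multiplying $A|_Y J_g|_Y = h|_Y\,[\,0\mid U'\,]$ by $\adj(J_g|_Y)$, applying Cauchy--Binet, and the Jacobi complementary minor identity $\det(\adj(P)[S,T]) = \pm(\det P)^{|S|-1}\det(P[T^c,S^c])$; dividing by the common factor $(\det J_g|_Y)^{e-1}$ (a nonzero element of the domain $\O_{Y,E}$, since $Y$ is the strict transform) gives exactly the displayed formula. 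The final bookkeeping also works: as $I$ and $J$ range independently, the ideal generated by the left sides is $\Jac_g|_Y\cdot\Jac_X|_Y$, while on the right, because the products $\det(U'[I,\cdot])\det(M_f[\cdot,\sigma])$ generate the product ideal $\langle\det U'[I,\cdot]\rangle_I\cdot\Jac_f = \Jac_f$ (the first factor being the unit ideal), one obtains $h|_Y^e\cdot\Jac_f$. I flag only a cosmetic slip: the chain rule gives $\partial f_i/\partial y_k = \sum_j (\partial F_i/\partial z_j)(g)\,\partial g_j/\partial y_k$, i.e.\ $A|_Y\cdot J_g|_Y$, so the transposes in $A|_Y\cdot J_g^T|_Y$ and $\adj(J_g^T|_Y)$ should be removed (or the convention for $J_g$ reversed). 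With that fixed, the argument is complete and correct, and it has the merit of making the paper's cited adjunction formula verifiable without consulting the external reference.
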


\subsection{Inversion of adjunction}

The next theorem generalizes the inversion of adjunction formula for
$\Q$-Gorenstein varieties proved in \cite{EM09} and \cite{Kaw08}.

\begin{thm}
\label{t:inv-adj}
Let $X$ be a reduced subscheme of pure dimension $n \ge 2$ and codimension
$e$ of a smooth variety $M$. Then for every proper, effective $\R$-ideal
$\frA$ on $M$ not containing any irreducible component of $X$ is its
vanishing locus, and every  closed subset $T \subset X$, 
we have
\[
\jmld T{X,\frA|_X} = \mld T{M,\frA\.\I_X^e}.
\]
\end{thm}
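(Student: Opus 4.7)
The plan is to reduce both sides of the stated identity to codimension computations on jet schemes and then exploit the natural inclusion $X_\infty \hookrightarrow M_\infty$ of arc spaces. Since $M$ is smooth, the right-hand side $\mld T{M,\frA\.\I_X^e}$ admits the classical Ein--\Mustata--Yasuda description as the infimum of $\codim(C, M_\infty) - \ord_C(\frA) - e\.\ord_C(\I_X)$ as $C$ runs over irreducible cylinders in $M_\infty$ with center in $T$, where $\ord_C$ denotes the generic contact order along an arc of $C$. The first step is to establish the analogous jet-theoretic description of $\jmld T{X,\frA|_X}$: using Theorem~B to translate $\kj EX + 1$ into the codimension $2n(m+1) - \dim_\C(TX_m|_{\e_{E,m}})$, one shows that $\jmld T{X,\frA|_X}$ equals a similar infimum over irreducible cylinders $C' \subset X_\infty$ centered in $T$, but with codimensions replaced by their Jacobian-corrected counterparts reflecting the term $-\ord_E(\Jac_X)$ built into $\kj EX$.

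For the adjunction inequality $\jmld T{X,\frA|_X} \ge \mld T{M,\frA\.\I_X^e}$, I would take a prime divisor $E$ over $X$ with center in $T$ realizing the left-hand side up to arbitrarily small error, and pass to a factorizing embedded log resolution $g \colon N \to M$ of $(X,\frA|_X)$ in $(M,\frA)$ on which $E$ appears as a divisor on the proper transform $Y$. Writing $\I_X\.\O_N = \I_Y\.\O_N(-G)$, Proposition~\ref{p:adj} gives $\Kj YX = (\Ku NM - eG)|_Y$. The transversality clause in the definition of factorizing embedded log resolution then produces an exceptional prime divisor $F$ of $g$ meeting $Y$ transversally along a subvariety containing the center of $E$ on $Y$, and a direct bookkeeping using the adjunction formula yields $\au F{M,\frA\.\I_X^e} \le \aj E{X,\frA|_X}$. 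Passing to the infimum over $E$ gives the desired inequality.

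The reverse inequality $\jmld T{X,\frA|_X} \le \mld T{M,\frA\.\I_X^e}$ is the substantive content of inversion of adjunction and represents the main technical obstacle. The strategy is to start from an irreducible cylinder $C \subset M_\infty$ centered in $T$ realizing $\mld T{M,\frA\.\I_X^e}$ up to a small error, stratify by contact orders with $\I_X$ and $\Jac_X$, and extract from $C \cap X_\infty$ a cylinder $C' \subset X_\infty$; the divisorial valuation associated with $C'$ then provides an $E$ over $X$ whose log J-discrepancy is controlled. The heart of the proof is a precise codimension comparison: for a cylinder $C$ whose arcs have fixed contact order $q$ along $\I_X$ and suitably bounded contact with $\Jac_X$ (so that Theorem~B applies to the corresponding jets of $X$), one must show that the Jacobian-corrected codimension of $C \cap X_\infty$ inside $X_\infty$ equals $\codim(C, M_\infty) - eq$. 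This is exactly the identity that makes the factor $\I_X^e$ on the right balance the Jacobian shift on the left, and it is established by a fiberwise analysis of the truncation maps between jet schemes of $X$ via the Nash blow-up, in the spirit of the arguments of \cite{EM09} and \cite{Kaw08}. Combining the two directions yields the desired equality; the hypothesis $n \ge 2$ is used at the end to invoke Proposition~\ref{p:Jmld}, ensuring that the identity remains meaningful (as $-\infty = -\infty$) when log J-canonicity fails in every neighborhood of $T$.
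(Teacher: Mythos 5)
Your overall architecture matches the paper's: prove one inequality via a factorizing embedded log resolution and the adjunction formula (Proposition~\ref{p:adj}), and the reverse via cylinders in arc spaces. The first direction is essentially correct as you describe it. In the inversion direction, however, there are three concrete issues with the proposal as written.

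First, you assert a codimension \emph{equality}: that the Jacobian-corrected codimension of $C\cap X_\infty$ in $X_\infty$ equals $\codim(C,M_\infty)-eq$. What is actually available, and what the paper uses, is an \emph{inequality}. The paper takes $W=W^1(F)\subset M_\infty$ for a prime divisor $F$ over $M$, picks an irreducible component $C$ of $W\cap X_\infty$ not contained in the arcs of $\Sing X$, and then chains two inequalities: $\codim(W^q(E),X_\infty)\le\codim(C,X_\infty)$ (since the quasi-cylinder $C$ sits inside the maximal divisorial set $W^q(E)$, by \cite{dFEI08}), and $\codim(C,X_\infty)\le\codim(W,M_\infty)+\ord_F(\Jac_{X,M})-e\ord_F(\I_X)$ (Lemma~8.4 of \cite{EM09}). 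Neither of these is an equality in general, and the argument does not require it; claiming equality is both unsupported and unnecessary.

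Second, your proposal silently drops the multiplicity $q$. The valuation $\val_C$ attached to the component $C$ of $W\cap X_\infty$ is $q\ord_E$ for some integer $q\ge 1$, not simply $\ord_E$, and the resulting estimate one obtains is $q\cdot\aj{E}{X,\frA|_X}\le\au{F}{M,\frA\.\I_X^e}$ rather than the bare inequality you want. One then divides by $q$ only after observing the right-hand side is $\ge 0$, and handles the negative case separately via Proposition~\ref{p:Jmld}. Without tracking $q$, the argument does not close.

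Third, you propose to route the proof through Theorem~B together with an ``infimum over cylinders'' description of $\jmld T{X,\frA|_X}$. The paper never establishes such a description, and Theorem~B (which computes $\kj EX$ from the tangent space of a jet scheme at a single specific point) is not used in the proof of inversion of adjunction; the paper instead uses the Mather codimension formula $\km EX + 1 = \codim(W^1(E),X_\infty)$ from \cite{dFEI08}, and passes to $\kj EX$ via the elementary relation $\kj EX = \km EX - \ord_E(\Jac_X)$ together with the comparison $\ord_F(\Jac_{X,M})\le q\ord_E(\Jac_X)$. If you want to invoke a jet-scheme infimum characterization of the minimal log J-discrepancy, that is an additional statement that would need its own proof; as written, it is a gap.
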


The theorem has been proven independently by Ishii \cite{Ish11}. The line
of arguments presented here is slightly different (although equivalent at
the core) from that given in \cite{Ish11}, which follows more closely
\cite{EM09}. 

The proof uses the jet schemes $X_m$ and the space of arcs $X_\infty$ of
$X$. We refer the reader to \cite{DL99,EM09} for the basic definitions and
properties of the theory. We will use the description of divisorial
valuations and discrepancies given in \cite{ELM04} in the smooth case and
then extended in \cite{dFEI08} to the singular case. Since here we allow
$X$ to be reducible, the only further extension we need is that of the
notion of quasi-cylinder and codimension. So, we say that a subset $C
\subset X_\infty$ is a \emph{quasi-cylinder} of \emph{codimension} $k$ if
$C$ is contained in the arc space $(X_i)_\infty$ of some irreducible
component $X_i$ of $X$ and is a quasi-cylinder of codimension $k$ in there.
Since $X$ is equidimensional, this implies that if $\ff_m\colon X_\infty
\to X_m$ is the truncation map, then $k = n(m+1) - \dim(\ff_m(C))$ for all
$m \gg 1$, where $n = \dim X$. 

\begin{proof}[Proof of Theorem~\ref{t:inv-adj}]
For every generic point $\e_{X_i}$ of an irreducible component $X_i$ of $X$
we have $\jmld {\e_{X_i}}{X,\frA|_X} = 0 = \mld
{\e_{X_i}}{M,\frA\.\I_X^e}$. We can therefore reduce to the case in which
$T$ has codimension $\ge 1$ in $X$.

To prove one inequality, we take a factorizing embedded log resolution $g
\colon N \to M$ of $(X,\frA|_X)$ in $(M,\frA)$, and let $f \colon Y \to X$
be the induced log resolution. Since every $g$-exceptional divisor that
meets $Y$ intersects it transversely, if $F$ is any such divisor and $E$ is
the divisor cut out by $F$ on $Y$, then $\ord_E(\frA|_X) = \ord_F(\frA)$.
Using Proposition~\ref{p:adj}, it follows by direct comparison along the
$g$-exceptional divisors meeting $Y$ and having center inside $T$ that
either $(X,\frA|_X)$ is not log J-canonical, in which case
$(M,\frA\.\I_X^e)$ is not log canonical and both minimal log
(J-)discrepancies are $-\infty$, or the first pair is log J-canonical and
\[
\jmld T{X,\frA|_X} \ge \mld T{M,\frA\.\I_X^e}.
\]
Here we are using the fact that if $(X,\frA|_X)$ is J-canonical then its
minimal log J-discrepancy is computed on any given log resolution of
$(X,\frA|_X)$, see Proposition~\ref{p:Jmld}.

The reverse inequality follows from the following claim.

\begin{claim}
\label{cl:inv-adj}
For every prime divisor $F$ over $M$ with center inside $X$, there is a
prime divisor $E$ over $X$ with center contained in the center of $F$, and
an integer $q \ge 1$, such that
\begin{equation}
\label{eq:inv-adj}
q\.\aj E{X,\frA|_X} \le \au F{M,\frA\.\I_X^e}.
\end{equation}
\end{claim}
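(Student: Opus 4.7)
The plan is to adapt the arc-space strategy of Ein--Musta\c t\u a--Yasuda and Kawakita (cf.\ \cite{EMY03, EM04, EM09, Kaw08}) to the Jacobian setting, in the spirit of \cite{dFEI08, Ish11}. The idea is to realize the right-hand side $\au F{M,\frA\.\I_X^e}$ as the codimension-minus-contact of a cylinder $C_F \subset M_\infty$ associated with $\ord_F$, to extract from $C_F$ an irreducible quasi-cylinder $C \subset X_\infty$ dominating $c_M(F)$, to identify $C$ with the maximal divisorial set of $q\ord_E$ for some prime divisor $E$ over $X$ and integer $q \ge 1$ with $c_X(E) \subset c_M(F)$, and finally to compare the resulting codimension-minus-contact numbers on the two sides.

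For the first step, let $C_F \subset M_\infty$ be the maximal divisorial set of $\ord_F$. Since $M$ is smooth, Musta\c t\u a's theorem yields $\codim_{M_\infty}(C_F) = \ku FM + 1$, and together with the identity $\ord_{C_F}(\frA\.\I_X^e) = \ord_F(\frA) + e\.\ord_F(\I_X)$ this gives
$$
\au F{M,\frA\.\I_X^e} = \codim_{M_\infty}(C_F) - \ord_{C_F}(\frA\.\I_X^e).
$$
For the second step, because $c_M(F) \subset X$ the order $s := \ord_F(\I_X) \ge 1$ is positive, so generic arcs in $C_F$ have positive contact with $X$. Applying a curve-selection argument to the preimage inside $X_\infty$ (under $X_\infty \inj M_\infty$) of the truncated image $\ff_m(C_F) \subset M_m$ for $m \gg 0$, we produce an irreducible quasi-cylinder $C \subset (X_i)_\infty$ on some irreducible component $X_i$ of $X$ covering $c_M(F)$, whose generic point is an arc $\g$ satisfying $\ord_\g(\f) = q\.\ord_F(\f)$ for every $\f \in \O_M$ and some integer $q \ge 1$. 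By \cite{dFEI08}, the quasi-cylinder $C$ is the maximal divisorial set of $q\ord_E$ for a prime divisor $E$ over $X$ with $c_X(E) \subset c_M(F)$.

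For the third step, the Mather-type codimension formula on $X_\infty$ of \cite{dFEI08, Ish11}, combined with the definition $\kj EX = \km EX - \ord_E(\Jac_X)$, yields
$$
\codim_{X_\infty}(C) - \ord_C(\Jac_X) \ge q(\kj EX + 1),
$$
and by construction $\ord_C(\frA|_X) = q\.\ord_E(\frA|_X)$, so
$$
\codim_{X_\infty}(C) - \ord_C(\frA|_X \cdot \Jac_X) \ge q\.\aj E{X,\frA|_X}.
$$

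The main obstacle, and the technical heart of the argument, is the codimension-and-contact comparison
$$
\codim_{X_\infty}(C) - \ord_C(\frA|_X \cdot \Jac_X) \le \codim_{M_\infty}(C_F) - \ord_{C_F}(\frA\.\I_X^e),
$$
which, combined with the previous two displays, yields $q\.\aj E{X,\frA|_X} \le \au F{M,\frA\.\I_X^e}$. This is precisely where the codimension $e$ of $X$ in $M$ and the compensating factor $\I_X^e$ play their role: the loss of codimension incurred in passing from arcs in $M$ to arcs in $X$ is exactly offset by the $e\.\ord_F(\I_X)$ contribution on the $M$-side, while the extra $\Jac_X$ on the $X$-side corrects for the failure of $X$ to be locally complete intersection. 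To verify the comparison, one passes to a factorizing embedded log resolution $g \colon N \to M$ of $X$ in $M$, computes both sides on $N_\infty$ via the Denef--Loeser change-of-variables formula applied to $g$ and to the induced resolution $f \colon Y \to X$ of the proper transform, and matches the divisorial contributions through Proposition~\ref{p:adj}, which reduces the inequality to a divisor-by-divisor verification on a simple normal crossing model, parallel to the arguments of \cite{EM04, EM09, Kaw08}.
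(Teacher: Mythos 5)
Your overall strategy (compare codimensions of divisorial sets and cylinders in $M_\infty$ and $X_\infty$, normalized by the contact orders of the relevant ideals) is the right one, and Step~1 and the shape of Step~4 match the paper's actual argument. But there is a genuine error in Step~2: the equality $\ord_\g(\f) = q\cdot\ord_F(\f)$ for every $\f \in \O_M$ does not hold, and it is not claimed by any of the references you cite. The multiplier $q$ is defined intrinsically on $X$, via the identity $\val_C = q\cdot\ord_E$ coming from Propositions~3.10 and~2.12 of \cite{dFEI08}; it has nothing to do with how $\val_C$ restricts to functions on $M$. What the inclusion $C \subset W^1(F)$ gives you is only the one-sided inequality $\val_C(\frb|_X) \ge \ord_F(\frb)$ for ideals $\frb \subset \O_M$ not vanishing on any component of $X$ (because the generic arc of $C$ lies in the closure of the contact locus defining $W^1(F)$, hence can only have \emph{larger} contact). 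The paper applies this inequality, together with the effectiveness of $\frA$, to get $q\cdot\ord_E(\frA|_X) \ge \ord_F(\frA)$ and $q\cdot\ord_E(\Jac_X) \ge \ord_F(\Jac_{X,M})$, and these are indispensable: the effectiveness hypothesis is precisely what makes the inequalities go in the direction you need. Your proposed equality would render that hypothesis superfluous and is simply false in general.

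The other point where you depart from the paper is the central codimension bound. The paper's argument does \emph{not} prove the codimension-and-contact comparison by passing to a factorizing embedded resolution and applying a change-of-variables computation on $N_\infty$. Instead it takes $C$ to be an irreducible component of $W^1(F) \cap X_\infty$ not contained in the arc space of the singular locus (using Lemma~8.3 of \cite{EM09} to know such a component exists), and then appeals directly to Lemma~8.4 of \cite{EM09} (as in the proof of their Theorem~8.1) for the pure arc-space inequality
\[
\codim(C, X_\infty) \;\le\; \codim(W^1(F), M_\infty) + \ord_F(\Jac_{X,M}) - e\,\ord_F(\I_X).
\]
Everything else then follows from the one-sided valuation inequalities above together with $\kj EX = \km EX - \ord_E(\Jac_X)$ and the codimension formula $\codim(W^q(E),X_\infty) = q(\km EX + 1)$. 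Your Step~4 outline (resolution, Denef--Loeser, match divisorial contributions via Proposition~\ref{p:adj}) is a plausible alternative route, but as written it is a placeholder rather than a proof, and before attempting it you would first need to repair Step~2 so that the quantities you are comparing are the correct ones.
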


To see that this implies the inequality $\jmld T{X,\frA|_X} \le \mld
T{M,\frA\.\I_X^e}$, note that if $\mld T{M,\frA\.\I_X^e} \ge 0$ then we get
the statement by dividing by $q$ in~\eqref{eq:inv-adj}, whereas if $\mld
T{M,\frA\.\I_X^e} < 0$ then the formula only implies that $\jmld
T{X,\frA|_X} < 0$, but then one deduces immediately that $\jmld
T{X,\frA|_X} = -\infty$ by Proposition~\ref{p:Jmld}.

It remains to prove Claim~\ref{cl:inv-adj}. We consider the \emph{maximal
divisorial set}
\[
W = W^1(F) \subset M_\infty,
\]
where $M_\infty$ is the space of arcs of $M$. By definition, $W$ is the
closure in $M_\infty$ of the set of arcs of $N$ having order of contact one
with $F$ (cf.\ \cite{ELM04}). The set $W$ is an irreducible cylinder in
$M_\infty$. By the results of \cite{ELM04}, the valuation $\val_W$
determined by the vanishing order along the generic arc in $W$ agrees with
the divisorial valuation $\ord_F$, and moreover
\[
\codim(W,M_\infty) = \ku FM + 1.
\]

The intersection $W \cap X_\infty \subset X_\infty$ is a cylinder in
$X_\infty$ and is not contained in the arc space of the singular locus of
$X$, by Lemma~8.3 of \cite{EM09}. Let $C$ be an irreducible component of $W
\cap X_\infty$ that is not contained in the arc space of the singular locus
of $X$. Then $C$ is a quasi-cylinder in $X_\infty$. By Propositions~3.10
and 2.12 of \cite{dFEI08}, we can find a prime divisor $E$ over $X$, and an
integer $q \ge 1$, such that the valuation $\val_C$ associated to the
quasi-cylinder $C$ is equal to the divisorial valuation $q\ord_E$, and
moreover the maximal divisorial set $W^q(E) \subset X_\infty$ is a
quasi-cylinder of codimension
\[
\codim(W^q(E),X_\infty) = q\.(\km EX + 1),
\]
where $\km EX$ is the Mather discrepancy of $E$ over $X$.

We have the following chain of inequalities:
\begin{align*}
q\.(\km EX + 1) &= \codim(W^q(E),X_\infty) \\
& \le \codim(C,X_\infty) \\
& \le \codim(W,M_\infty) + q\.\ord_E(\Jac_X) - e\ord_F(\I_X).
\end{align*}
The first inequality follows from the fact that, as it is explained in the
proof of Propositions~3.10 of \cite{dFEI08}, $C$ is contained in $W^q(E)$,
and the second one by applying Lemma~8.4 in \cite{EM09} as in the proof of
Theorem~8.1 of \cite{EM09}.

Observe that, for any proper ideal $\frb \subset \O_M$ not vanishing on any
component of $X$, we have $\val_C(\frb|_X) \ge \val_W(\frb)$ by the
inclusion $C \subset W$ and the fact that if $\g \in C$ then
$\ord_\g(\frb|_X) = \ord_\g(\frb)$. In particular, this implies that
\[
q\.\ord_E(\frA|_X) \ge \ord_F(\frA)
\]
since $\frA$ is effective. Since $\kj EX = \km EX - \ord_E(\Jac_X)$, we deduce
that
\[
q\.(\kj EX + 1) \le \ku FM + 1 - e\ord_F(\I_X).
\]
Claim~\ref{cl:inv-adj} follows by combining these inequalities.
\end{proof}

\begin{rmk}
\label{r:inv-adj}
With the same notation as in Theorem~\ref{t:inv-adj}, suppose that $X$ has
dimension one. In this case most of the arguments of the proof goes
through, the only problem being that it is no longer true in general that
$\jmld T{X,\frA|_X}$ is $-\infty$ whenever it is negative, and one
concludes in this case that either $\jmld T{X,\frA|_X} = \mld
T{M,\frA\.\I_X^e} \ge 0$ or $0 > \jmld T{X,\frA|_X} \ge \mld
T{M,\frA\.\I_X^e}$, and the latter is $-\infty$ if $e \ge 1$. These
inequalities will suffice, however, in the applications of inversion of
adjunction to the proofs of Corollary~\ref{c:jet-sing} and
Theorem~\ref{t:rat-DB-inv-adj}.
\end{rmk}

\subsection{ACC and semi-continuity}

Inversion of adjunction has several implications regarding the properties
of the invariants of singularities related to J-discrepancies, which
generalize analogous properties known for normal varieties with locally
complete intersection singularities. 

The first application gives the ACC property for the sets of log
J-canonical thresholds in any fixed dimension. The problem arises from a
conjecture of Shokurov for log canonical thresholds in the usual setting
\cite{Sho92}, which has recently being solved for certain classes of
singularities in \cite{dFEM10,dFEM11}. In the framework considered in this
paper we obtain the following unconditional result.

\begin{cor}
\label{c:ACC}
For every integer $n$, the set of log J-canonical thresholds in dimension
$n$
\[
\{\;
\jlct{\fra} 
\mid
\text{$\fra \subset \O_X$, $X$ log J-canonical of pure dimension $n$}
\;\}
\]
satisfies the ascending chain condition. That is, every increasing sequence
in the set is eventually constant.
\end{cor}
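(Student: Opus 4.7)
The plan is to use inversion of adjunction (Theorem~\ref{t:inv-adj}) to reduce the ACC for log J-canonical thresholds to the known ACC for (mixed) log canonical thresholds on smooth varieties, established in \cite{dFEM10, dFEM11}.

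Since being log J-canonical and the corresponding threshold are local invariants, I would work locally at a point $x \in X$. Embedding $X$ at $x$ as a closed subscheme of a smooth variety $M$ of codimension $e$, and choosing any ideal $\fra_M \subset \O_M$ with $\fra_M \cdot \O_X = \fra$, an application of Theorem~\ref{t:inv-adj} to the effective $\R$-ideal $\fra_M^\l$ on $M$ gives
\[
\jmld x{X, \fra^\l} = \mld x{M, \fra_M^\l \cdot \I_X^e}
\]
for every $\l \ge 0$. Taking the supremum over $\l$ for which both sides are nonnegative and then varying $x$, one exhibits $\jlct{\fra}$ as the infimum over $x \in X$ of the mixed log canonical thresholds
\[
\sup\bigl\{\,\l \ge 0 : (M, \fra_M^\l \cdot \I_X^e) \text{ is log canonical at } x\,\bigr\}
\]
on the smooth ambient varieties $M$.

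I would then invoke the ACC theorem for mixed log canonical thresholds on smooth varieties, regarding the codimension $e$ as a parameter varying in the DCC set $\Z_{>0}$ and $\l$ as the variable whose extremal values realize the thresholds in question. The main obstacle is that the ambient dimension $n+e$ is not bounded in terms of $n$ alone: log J-canonical singularities of fixed dimension $n$ can have arbitrarily large embedding dimension, for instance suitable affine cones over high-dimensional smooth polarized varieties. Overcoming this requires an ACC statement that applies uniformly across all ambient dimensions, or a reduction to the lci case in fixed dimension $n$ (in the spirit of \cite{dFEM11}) exploiting the constraint $\lct{\I_X} = e$ forced by log J-canonicity of $X$ through Theorem~\ref{t:inv-adj} applied with $\fra = \O_X$, so that the codimensional contribution to the mixed LCT is controlled independently of the embedding.
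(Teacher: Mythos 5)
Your overall strategy — localize, embed in a smooth $M$, use Theorem~\ref{t:inv-adj} to convert $\jlct{\fra}$ into a mixed log canonical threshold on $M$, then cite the ACC for mixed log canonical thresholds on smooth varieties — is exactly the right route and matches the paper. However, the "main obstacle" you identify is spurious, and the way you propose to circumvent it is not what the paper does and is not needed.

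You claim that log J-canonical varieties of fixed dimension $n$ can have arbitrarily large embedded dimension, citing affine cones over high-dimensional polarized varieties. This is false, and the paper's proof hinges precisely on this point: if $X$ is log J-canonical of pure dimension $n$, then $\dim T_pX \le 2n$ for every closed point $p \in X$. This is the extension of Proposition~6.3 of \cite{dFEM10} (there stated for normal lci log canonical varieties) to the log J-canonical setting, and it is the one genuinely new ingredient. The argument is short: embed $X$ locally in a smooth $M$ of dimension $N = \dim T_pX$, so that $\I_X \subset \frm_p^2$, hence $\ord_E(\I_X) \ge 2$ where $E$ is the exceptional divisor of the blow-up of $M$ at $p$. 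Inversion of adjunction (applied to $\frA = \O_X$) makes $(M,\I_X^e)$ log canonical with $e = N - n$, so $\au E{M,\I_X^e} = N - e\,\ord_E(\I_X) \le N - 2e = n - e \ge 0$, forcing $e \le n$ and thus $N \le 2n$. (For the affine cone examples you have in mind, the pair $(M,\I_X^e)$ fails to be log canonical precisely because of this blow-up, so such cones are not log J-canonical once the codimension exceeds $n$.) With the embedded dimension bounded by $2n$, the ambient smooth variety lives in a bounded set of dimensions and you may directly invoke Theorem~6.1 of \cite{dFEM10} on mixed log canonical thresholds; no uniform-across-all-ambient-dimensions ACC and no separate reduction to the lci case is required. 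Supplying the bound $\dim T_pX \le 2n$ is the gap in your write-up.
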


The proof of this property is a straightforward extensions of the
corresponding proof given in \cite{dFEM10} in the case of normal varieties
with locally complete intersection singularities. In short, it goes as
follows. The same argument of the proof of Proposition~6.3 of \cite{dFEM10}
shows that if $X$ is a reduced equidimensional scheme with log J-canonical
singularities then $\dim T_pX \le 2\dim X$ for every $p \in X$. Then, using
this bound on the embedded dimension in combination with inversion of
adjunction (Theorem~\ref{t:inv-adj}) one deduces the above ACC property
directly from the analogous property of \emph{mixed log canonical
thresholds} on smooth varieties, which is established in Theorem~6.1 of
\cite{dFEM10}.

The second application of inversion of adjunction regards the
semi-continuity of minimal log J-canonical discrepancies and the
characterization of regular points in terms of these invariants. Once more,
the question originates from a conjecture of Shokurov in the usual setting
of minimal log discrepancies later made more precise by Ambro; for a
discussion of this we refer to \cite{Amb99} and the references therein.
Again, we have unconditional results in our setting. The first statement
appears also in \cite{Ish11}.

\begin{cor}
\label{c:semi-contII}
For every effective pair $(X,\frA)$ where $X$ is a reduced equidimensional
scheme, the function on closed points
\[
\jmld {-}{X,\frA} \colon X \to \{-\infty\} \cup \R_{\ge 0}, 
\quad p \mapsto \jmld p{X,\frA},
\]
is lower semi-continuous in the Zariski topology. 
\end{cor}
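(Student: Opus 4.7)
The plan is to reduce the assertion to the classical lower semi-continuity of minimal log discrepancies on smooth varieties by means of inversion of adjunction (Theorem~\ref{t:inv-adj}). Since semi-continuity is a local property, fix any closed point $p_0 \in X$; it suffices to verify it on some affine open neighborhood $U$ of $p_0$. After shrinking, embed $U$ as a closed subscheme of a smooth affine variety $M$ in which $U$ has pure codimension $e$. Writing $\frA = \prod_k \fra_k^{c_k}$ with $c_k \ge 0$, lift each factor to $\fra'_k := \pi^{-1}(\fra_k) \subset \O_M$, where $\pi \colon \O_M \surj \O_U$ is the surjection; by construction $\fra'_k$ contains $\I_U$ and its restriction to $U$ equals $\fra_k$. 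Set $\frA' := \prod_k(\fra'_k)^{c_k}$. This is a proper effective $\R$-ideal on $M$ whose vanishing locus lies in $U$ and meets each irreducible component of $U$ properly, so $\frA'$ satisfies the hypotheses of Theorem~\ref{t:inv-adj}.

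Assuming $\dim X \ge 2$, applying Theorem~\ref{t:inv-adj} with $T = \{p\}$ for each closed point $p \in U$ yields
\[
\jmld{p}{X,\frA} \;=\; \jmld{p}{U,\frA|_U} \;=\; \mld{p}{M,\frA'\cdot\I_U^e}.
\]
The right-hand side is a lower semi-continuous function of $p \in M$: on a log resolution $h \colon N \to M$ of $(M,\frA'\cdot\I_U^e)$, it equals the minimum of the log discrepancies $\au F{M,\frA'\cdot\I_U^e}$ over the finitely many prime divisors $F \subset N$ with $p \in h(F)$, unless one of those log discrepancies is negative, in which case it is $-\infty$. Each $h(F)$ is closed in $M$, so the set of qualifying divisors grows under specialization of $p$ and the minimum can only decrease; this gives lower semi-continuity. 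Restricting to $U$ transfers the property to $\jmld{-}{X,\frA}$, proving the claim near $p_0$.

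The case $\dim X = 1$ requires only Remark~\ref{r:inv-adj} in place of Theorem~\ref{t:inv-adj}: the displayed equality still holds whenever either side is non-negative, and since the corollary declares values in $\{-\infty\}\cup\R_{\ge 0}$, openness of $\{p : \jmld{p}{X,\frA} > \alpha\}$ need only be verified for $\alpha \ge 0$, which is handled exactly as above. The main obstacle in the proof is rather mild and bookkeeping in nature: one must produce an extension $\frA'$ of $\frA|_U$ that is a proper $\R$-ideal on $M$ whose vanishing locus contains no irreducible component of $U$, so that Theorem~\ref{t:inv-adj} genuinely applies at every closed point of $U$. The explicit preimage construction performed on the affine chart $U \hookrightarrow M$ disposes of this issue and makes the reduction to the smooth case effective.
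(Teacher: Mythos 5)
Your reduction to the smooth ambient variety $M$ via inversion of adjunction is the right opening move and is consistent with what the paper intends (the paper cites the analogous argument in \cite{EM04}). Your construction of the lift $\frA'$ of $\frA$ to a proper effective $\R$-ideal on $M$ whose vanishing locus avoids every component of $U$ is also fine, as is the invocation of Remark~\ref{r:inv-adj} in dimension one. However, the core of your argument — the claimed lower semi-continuity of $p \mapsto \mld{p}{M,\frA'\cdot\I_U^e}$ — rests on a false formula, and this is a genuine gap.

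You assert that on a fixed log resolution $h\colon N \to M$ of $(M,\frA'\cdot\I_U^e)$, the minimal log discrepancy at a closed point $p$ equals the minimum of $\au F{M,\frA'\cdot\I_U^e}$ over the finitely many relevant prime divisors $F \subset N$ with $p \in h(F)$ (unless negative, in which case $-\infty$). This is not the case. The divisors $F$ you consider have centers that merely contain $p$, while $\mld p{\cdot}$ is the infimum over divisors whose center is \emph{equal} to $\{p\}$, and there is no reason that infimum should be attained on a resolution that does not also principalize $\I_p$; indeed Proposition~\ref{p:Jmld} (and its ambient analogue) realizes $\mld p{M,\frB}$ only on a log resolution of $(M, \frB\cdot\I_p)$, which changes with $p$. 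A concrete counterexample to your formula: take $U \subset M$ smooth of codimension $e$, $\frA = \O_U$, so $\frB = \I_U^e$. Then $\Bl_U M \to M$ is a log resolution, the single relevant divisor $E$ has $\au E{M,\I_U^e} = e - e = 0$ and center $U \ni p$, so your formula gives $\mld p{M,\I_U^e} = 0$. But by inversion of adjunction $\mld p{M,\I_U^e} = \mld p{U} = \dim U$, which is strictly positive whenever $\dim U \ge 1$.

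The lower semi-continuity of $p \mapsto \mld p{M,\frB}$ for $M$ smooth and $\frB$ an effective $\R$-ideal is a real theorem (Ambro \cite{Amb99}; Ein--\Mustata--Yasuda \cite{EMY03}), and its proof requires either precise inversion-of-adjunction estimates or an argument via contact loci in arc spaces; it does not follow from the naive reading of a single log resolution. You could repair your proof by simply citing that theorem as a black box after your reduction step, which would then match the spirit of the paper's proof (the paper says to follow the arguments of \cite{EM04} step by step, which is effectively to rerun the jet-scheme proof of semi-continuity in the ambient smooth variety). But the inline justification you give for the semi-continuity is incorrect and needs to be replaced.
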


\begin{cor}
\label{c:semi-contI}
Let $X$ be a reduced equidimensional scheme. For every Grothendieck point
$\e\in X$ we have
\[
\jmld \e X \le \codim(\e,X),
\]
and equality holds if and only if $X$ is smooth at $\e$.
\end{cor}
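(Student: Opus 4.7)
The plan is to reduce to a computation on a smooth ambient variety via the adjunction formula of Proposition~\ref{p:adj}, then exhibit specific divisors: an ordinary blow-up of $\overline{\{\e\}}$ for the upper bound $\jmld \e X \le c$, and a weighted blow-up to force strict inequality when $X$ is singular. After shrinking $X$, embed it in a smooth variety $M$, and write $Z := \overline{\{\e\}}$, $e := \codim(X, M)$, $c := \codim(\e, X)$. If $F$ is a prime divisor over $M$ with center $Z$, one can take a factorizing embedded log resolution $g\colon N \to M$ carrying $F$; its restriction $E := F|_Y$ to the strict transform $Y$ of $X$ satisfies $c_X(E) = \e$ and, by Proposition~\ref{p:adj},
\[
\aj E X \;=\; \ku F M + 1 - e\,\ord_F(\I_X) \;=\; \au F {M, \I_X^e}.
\]
Thus any such $F$ yields an upper bound for $\jmld \e X$.

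For $\jmld \e X \le c$, I take $F_1$ to be the exceptional divisor of the blow-up of $Z$ in $M$: then $\ku {F_1} M = c + e - 1$ and $\ord_{F_1}(\I_X) = \mult_Z(\I_X) \ge 1$ (since $\I_X \subset \I_Z$), so $\aj {E_1} X = c + e - e\cdot\mult_Z(\I_X) \le c$. The easy direction of the equality is then immediate: if $X$ is smooth at $\e$, then $\Jac_X$ is trivial near $\e$, so $\aj {E'} X$ coincides with the classical log discrepancy $\au {E'} X$ for every divisor $E'$ with $c_X(E') = \e$, and the standard identity $\mld \e X = \codim(\e, X)$ on a smooth variety gives $\jmld \e X = c$.

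The main obstacle is the converse: if $X$ is not smooth at $\e$, one must construct a divisor with $\aj E X < c$. Set $r := \dim_{\k(\e)} \bigl((\I_X + \frm_\e^2)/\frm_\e^2\bigr)$; a brief argument---any $r$ elements of $\I_X$ with linearly independent images in $\frm_\e/\frm_\e^2$ can be completed to a regular system of parameters of $\O_{M, \e}$ and so cut out a smooth local subscheme of codimension $r$ containing $X$---shows that $r \le e$, with equality precisely when $X$ is smooth at $\e$. Assuming $r < e$, pick such $x_1, \dots, x_r \in \I_X$, extend them to a regular system of parameters $x_1, \dots, x_{c+e}$ of $\O_{M, \e}$, and complete a generating set of $\I_X \O_{M, \e}$ by elements $g_1, \dots, g_s \in \frm_\e^2$. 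Consider the weighted blow-up of $M$ at $\e$ with weights $(\underbrace{2, \dots, 2}_{r}, \underbrace{1, \dots, 1}_{c+e-r})$, desingularized if necessary to a factorizing embedded log resolution. The resulting prime divisor $F_2$ satisfies $\ku {F_2} M = r + c + e - 1$; moreover, $\ord_{F_2}(x_i) = 2$ for $i \le r$, while every monomial in $\frm_\e^2$ has weighted order at least $2$ under these weights, forcing $\ord_{F_2}(g_j) \ge 2$ and hence $\ord_{F_2}(\I_X) = 2$. Proposition~\ref{p:adj} then yields
\[
\aj {E_2} X \;=\; r + c + e - 2e \;=\; r + c - e \;<\; c.
\]
The delicate step is the choice of weights, which is dictated by the local structure of $\I_X$ in the tangent space of $M$ at $\e$ and converts the failure of smoothness (recorded by $r < e$) into the strict decrease of the Jacobian log discrepancy.
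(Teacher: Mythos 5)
Your strategy—constructing divisors over a smooth ambient $M$ by an ordinary blow-up and a weighted blow-up and then restricting to the strict transform of $X$—is genuinely different from the paper's, which proceeds by induction on the embedded codimension of $X$ at $\e$. However, there is a gap in the step that transfers information from $M$ down to $X$.

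You claim that a chosen prime divisor $F$ over $M$ can be realized on a factorizing embedded log resolution $g\colon N\to M$, that $E:=F|_Y$ is a prime divisor over $X$ with $c_X(E)=\e$, and that Proposition~\ref{p:adj} gives $\aj E X = \au F{M,\I_X^e}$. But Proposition~\ref{p:adj} is an identity of divisors on $Y$; it yields that formula only for those $F$ that actually meet $Y$ on $N$. The extra blow-ups needed to achieve the transversality required in the definition of embedded log resolution (condition (iii)) can, and sometimes must, separate the strict transform of $F$ from $Y$, in which case $F\cap Y=\emptyset$ and no divisor $E$ is produced. For the cusp $X=\{y^2=x^3\}\subset M=\A^2$ with $\e$ the origin, $F_1$ (the exceptional of $\Bl_0\A^2$) is tangent to $Y_0=\Bl_0 X$; after blowing up the tangency point one has $Y$, the strict transform of $F_1$, and the new exceptional all passing through a single point, and the blow-up that fixes this separates $Y$ from the strict transform of $F_1$. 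Consistently, your formula would give $\aj{E_1}X=\au{F_1}{M,\I_X}=2-1\cdot 2=0$, whereas the true value on the normalization $\nu\colon\A^1\to X$ is $\aj{E_1}X=\ord_{E_1}(\Jac_\nu)-\ord_{E_1}(\Jac_X)+1=1-3+1=-1$. Here the bound $\jmld\e X\le c$ happens to survive, but your reasoning does not establish it in general, because the divisor $E$ over $X$ whose existence you assume need not exist for the specific $F$ you chose.

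The legitimate mechanism for passing from a divisor over $M$ to one over $X$ is the hard direction of inversion of adjunction (Claim~\ref{cl:inv-adj} inside the proof of Theorem~\ref{t:inv-adj}), which is an arc-space argument and produces $E$ with $q\,\aj E X \le \au F{M,\I_X^e}$ for some $q\ge1$ and $c_X(E)\subseteq c_M(F)$. Even then, Theorem~\ref{t:inv-adj} is stated for closed subsets $T\subset X$, not for Grothendieck points, so one must also control that the center of the produced divisor is exactly $\e$ and not a proper specialization; this is precisely what the paper's induction on embedded codimension handles. The remaining ingredients in your write-up—the fact that $r\le e$ with equality iff $X$ is smooth at $\e$, and the discrepancy computations $\ku{F_1}M=c+e-1$ and $\ku{F_2}M=r+c+e-1$ with $\ord_{F_2}(\I_X)\ge 2$—are correct, and the weighted blow-up is a good choice of $F$; what is missing is a valid way to turn the inequality $\au F{M,\I_X^e}<c$ over $M$ into the desired inequality $\jmld\e X<c$ over $X$.
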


\begin{rmk}
Since $\jmld \e X\in \Z \cup \{-\infty\}$, it follows that $X$ is regular
at $\e$ if and only if $\jmld \e X > \codim(\e,X) - 1$.
\end{rmk}

Once inversion of adjunction is in place, the proofs of these results are
standard. The proof of Corollary~\ref{c:semi-contII} follows step by step
the arguments of the corresponding result in \cite{EM04}. Regarding
Corollary~\ref{c:semi-contI} one uses induction on the embedded codimension
of $X$ at $\e$, as explained for instance in Remark~4.2 of \cite{dFE10} for
the locally complete intersection case. 

\begin{rmk}
On a completely different topic, one also sees from inversion of adjunction
that the bound on Castelnuovo--Mumford regularity proven in Corollary~1.4
of \cite{dFE10} holds for every reduced equidimensional projective scheme
$V \subset \P^n$ with log J-canonical singularities.
\end{rmk}

\section{Jet schemes}
\label{s:jet}

It is known that Mather discrepancies can be computed using the codimension
of certain sets in the arc space. We give now an analogous description for
Jacobian discrepancies which involves the tangent space to the arc space.
Throughout the section, let $X$ be a reduced scheme of pure dimension $n$.

\subsection{Mather discrepancies}

Recall that to a divisor $E \subset Y$ over $X$ one associates its
\emph{maximal divisorial set} $W^1(E)$, namely, the closure in $X_\infty$
of the image of the set of arcs on $Y$ having order of contact one with
$E$. It is an irreducible quasi-cylinder in the arc space $X_\infty$, and
its codimension measures the Mather log discrepancy
\[
\km EX + 1 = \codim\(W^1(E), X_\infty\),
\]
see \cite{dFEI08} for details. More precisely, if $\ff_m \colon X_\infty
\to X_m$ is the truncation map, then it follows by Lemma~3.4 of \cite{DL99}
(cf.~Lemma~3.8 and the argument in the proof of Theorem~3.9 of
\cite{dFEI08}) that $W^1(E)$ is the closure of a quasi-cylinder over a
constructible subset of $X_{m_0}$ where $m_0 = 2\ord_E(\Jac_X)$, and thus
we have
\[
\km EX + 1 = n(m+1) - \dim\(\ff_m(W^1(E))\) \fall m \ge 2\ord_E(\Jac_X).
\]
These formulas are the geometric manifestation of the change-of-variables
formula in motivic integration \cite{DL99}.

\subsection{Jacobian discrepancies}

To obtain an analogous description for Jacobian discrepancies, we need to
consider the total spaces of the tangent sheaves of the arc space and of
the jet schemes. We denote them by $TX_\infty$ and $TX_m$. These spaces are
schemes, and their functors of points are easy to describe; note however
that $TX_\infty$ is not of finite type. For example, when $X$ is affine the
$\C$-valued points are given by
\begin{align*}
TX_\infty(\C) &=
\Hom \( \Spec \C[\![t]\!][\ep]/(\ep^2),\,  X \), \\
TX_m(\C) &=
\Hom \( \Spec \C[t, \ep]/(t^{m+1}, \ep^2),\,  X \).
\end{align*}
We have natural projections
\[
\p_\infty     \colon TX_\infty     \longrightarrow  X_\infty,
\qquad
\p_m          \colon TX_m          \longrightarrow  X_m
\]
from the tangent spaces to their bases. Given an arc $\a \in X_\infty$ and
a \emph{liftable} jet $\b \in \ff_m(X_\infty) \subset X_m$, we are
interested in the restrictions of the tangent spaces over these points,
which we denote by
\[
TX_\infty|_{\a} := \p_\infty^{-1} \( \a \),
\qquad
TX_m|_{\b} := \p_m^{-1} \( \b \).
\]

\begin{prop}
\label{p:jet-discr}
Let $X$ be a reduced scheme of pure dimension $n$. Consider a liftable jet
$\b \in \ff_m(X_\infty)$, and let $L = \C(\b)$ be the field of definition
of $\b$. Then
\[
\dim_L\( TX_m|_{\b} \) = 
n(m+1) + \ord_{\b} \( \Jac_X \) \fall m \geq \ord_{\b}(\Jac_X).
\]
\end{prop}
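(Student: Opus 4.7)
First, I would interpret the fiber $TX_m|_\b$ as a Hom space of Kähler differentials. By the functor of points, a tangent vector at $\b\in X_m$ is an $L$-morphism $\Spec L[t,\ep]/(t^{m+1},\ep^2)\to X$ restricting mod $\ep$ to $\b$. Setting $R:=L[t]/(t^{m+1})$, such a morphism amounts to the datum of a $\C$-derivation from $\O_X$ into $R$ (where $R$ is made into an $\O_X$-module via $\b$), hence to an $R$-linear map $\b^*\Om_X\to R$. Thus
\[
TX_m|_\b \;\cong\; \Hom_R(\b^*\Om_X,\, R),
\]
and it suffices to compute the $L$-dimension of the right-hand side.

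Next, I would use the liftability of $\b$ to analyze $\b^*\Om_X$ via the structure theorem. Choose any arc $\g\in X_\infty$ with $\ff_m(\g)=\b$, so that $\b^*\Om_X\cong \g^*\Om_X\otimes_{L[\![t]\!]}R$. Since $\ord_\b(\Jac_X)\le m$ is finite, so is $\ord_\g(\Jac_X)$, and hence the generic point of $\g$ lies in the smooth locus of $X$. Because $X$ is reduced of pure dimension $n$, the $L[\![t]\!]$-module $\g^*\Om_X$ has generic rank $n$, and the structure theorem for finitely generated modules over the PID $L[\![t]\!]$ gives
\[
\g^*\Om_X \;\cong\; L[\![t]\!]^n \,\oplus\, \bigoplus_i L[\![t]\!]/(t^{a_i})
\]
for some positive integers $a_i$. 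A direct Fitting ideal computation from this decomposition yields $\Fitt^n(\g^*\Om_X)=(t^{\sum_i a_i})$, so $\sum_i a_i=\ord_\g(\Jac_X)=\ord_\b(\Jac_X)$, the last equality because the order is at most $m$. Under the hypothesis $m\ge\sum_i a_i$, each $a_i\le m<m+1$, and reducing mod $t^{m+1}$ yields
\[
\b^*\Om_X \;\cong\; R^n \,\oplus\, \bigoplus_i R/(t^{a_i}).
\]

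Finally, a direct Hom calculation finishes the proof: $\Hom_R(R,R)=R$ has $L$-dimension $m+1$, and for each torsion summand $\Hom_R(R/(t^{a_i}),R)\cong \operatorname{Ann}_R(t^{a_i})=(t^{m+1-a_i})R$ has $L$-dimension $a_i$. Summing over all summands,
\[
\dim_L \Hom_R(\b^*\Om_X,R) \;=\; n(m+1)+\sum_i a_i \;=\; n(m+1)+\ord_\b(\Jac_X),
\]
as claimed. There is no serious conceptual obstacle; the argument is a bookkeeping exercise combining the functor of points, the PID structure theorem, and the Fitting ideal description of $\Jac_X$. The point that requires the most care is that liftability is used twice---once to reduce the computation of $\b^*\Om_X$ to that of a module over the PID $L[\![t]\!]$, and once (via the finiteness of $\ord_\b(\Jac_X)$) to guarantee that the generic rank of this module is exactly $n$.
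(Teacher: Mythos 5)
Your proof is correct and rests on the same core idea as the paper's: apply the structure theorem over the discrete valuation ring $L[\![t]\!]$ to the pulled-back module of differentials, identify the sum of torsion exponents via the Fitting ideal, and use the hypothesis $m\ge\ord_\b(\Jac_X)$ to ensure nothing is lost upon truncation. The packaging differs: the paper fixes an embedding $X\subset\A^{n+e}$, realizes $TX_m|_\b$ explicitly as $\ker J(\b)$, diagonalizes the Jacobian matrix, and then spends a paragraph arguing that the number $s$ of nonzero elementary divisors, the number $l$ of those that survive truncation, and the codimension $e$ all coincide (using $\Fitt^{n-1}(\Om_X)=0$ together with the hypothesis on $m$). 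Your presentation replaces all of this matrix bookkeeping with the intrinsic identification $TX_m|_\b\cong\Hom_R(\b^*\Om_X,R)$ and the single observation that a lift $\g$ meets the smooth locus (again using $\ord_\b(\Jac_X)\le m$), so that the free rank is exactly $n$; the paper's $e=l=s$ argument is precisely the extrinsic shadow of this statement. The intrinsic route is somewhat cleaner and avoids choosing an embedding, at the mild cost of needing the adjunction $\Der_\C(\O_X,R)\cong\Hom_R(\b^*\Om_X,R)$ and the base-change compatibility of Fitting ideals, both standard. One point worth spelling out (which the paper also glosses over) is that a liftable $\b$ admits a lift $\g$ defined over $L=\C(\b)$, or alternatively that one can pass to a larger residue field since both sides of the formula behave well under flat base change.
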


\begin{proof}
Let $\a \in X_\infty$ be a lift of $\b$. We can assume $X$ is affine, and
embedded in $M = \A^{n+e}$. We denote by $x_1, \dots, x_{n+e}$ the
coordinates in $M$, and let $I_X = (f_1, \dots, f_r)$ be the equations of
$X$. Then the arc $\a$ is given by a vector $(\a_1, \dots, \a_{n+e})$ with
entries $\a_i = \a_i(t)$ in the power series ring $L[\![t]\!]$. Since $\a$
is in $X_\infty$, we know that $f_j(\a) = 0$ for all $j$.

The restriction $TM_\infty|_\a$ can be thought as a free module over
$L[\![t]\!]$ of rank $n+e$. Specifically, every element $\x \in
TM_\infty|_\a$ can be written in the form
\[
\x = \a + v \, \ep,
\]
where $v = (v_1, \dots, v_{n+e})$ is a vector with coefficients in
$L[\![t]\!]$ and $\ep$ is a fixed variable verifying $\ep^2 = 0$. The
tangent vector $\x$ belongs to $TX_\infty|_\a$ if $f_j(\x) = 0$ for all
$j$. Using the Taylor expansion, we get
\[
f_j(\x) = f_j(\a) + \sum_{i=1}^{n+e} 
\frac {\partial f_j}{\partial x_i} (\a) \, v_i \, \ep.
\]
Let $J = \( \frac {\partial f_j} {\partial x_i} \)$ be the Jacobian matrix.
Since $f_j(\a) = 0$, we see that
\[
\x \in TX_\infty|_\alpha
\quad\Longleftrightarrow\quad
J(\a) \, v = 0.
\]
In other words, $TX_\infty|_\a$ can be computed inside of $TM_\infty|_\a$
as the kernel of $J(\a)$.

Analogous statements hold for the jet $\b$. In this case, $TM_m|_\b$ is a
free module over $L[t]/(t^{m+1})$, also of rank $n+e$, and $TX_m|_\b$ is
the kernel of $J(\b)$:
\[
\b + w \, \ep \in TX_m|_\b
\quad\Longleftrightarrow\quad
J(\b) \, w = 0 \pmod{t^{m+1}}.
\]

The goal is therefore to compute the dimension of the kernel of $J(\b)$,
and this can be done easily by diagonalizing the matrix. In our case, we
can diagonalize both $J(\b)$ and $J(\a)$ simultaneously. More precisely,
the structure theorem for finitely generated modules over PID's tells us
that we can find invertible matrices $A$ and $B$ with coefficients in
$L[\![t]\!]$ such that
\[
A \. J(\a) \. B = D
\]
where $D$ is a diagonal matrix. Notice that $D$ is not a square matrix: by
\emph{diagonal} we mean that all of its entries $d_{ij}$ are zero except
when $i=j$. We can also assume that $D$ is of the form
\[
D = \operatorname{diag}(t^{a_1}, \dots, t^{a_s}, 0, \dots, 0)
\]
with $0 \leq a_1 \leq a_2 \leq \dots \leq a_s$.

Notice that $J(\b)$ is the truncation of $J(\a)$ to order $m$, so if we
denote by $A_m$, $B_m$, and $D_m$ the truncations of $A$, $B$, and $D$, we
have
\[
A_m \. J(\b) \. B_m = D_m.
\]
The matrices $A_m$ and $B_m$ are also invertible, so in particular $J(\b)$
and $D_m$ have isomorphic kernels. The matrix $D_m$ is given by
\[
D_m = \operatorname{diag}(t^{a_1}, \dots, t^{a_l}, 0, \dots, 0),
\]
where $l \leq s$ is picked so that $a_k > m$ for $k > l$. Its kernel is
\[
(t^{m+1-a_1})/(t^{m+1})
\oplus \dots \oplus
(t^{m+1-a_l})/(t^{m+1})
\oplus \( L[t]/(t^{m+1}) \)^{n+e-l},
\]
which has dimension $a_1 + \dots + a_l + (n+e-l)(m+1)$ over $L$.

Recall that the matrix $J$ gives a presentation for the module of
differentials $\Om_X$. Therefore, the $k$-th Fitting ideal $\Fitt^k(\Om_X)$
is generated by the minors of $J$ of size $n+e-k$. In particular, the
orders of vanishing
\[
\ord_\a\(\Fitt^k(\Om_X)\),
\qquad\quad
\ord_\b\(\Fitt^k(\Om_X)\)
\]
are the smallest order of vanishing of a minor of size $n+e-k$ of $J(\a)$
and $J(\b)$. Since $A$, $A_m$, $B$, and $B_m$ are invertible, these orders
can be computed using $D$ and $D_m$, and we see that
\[
\ord_\a\(\Fitt^k(\Om_X)\) = 
\begin{cases}
a_1 + \dots + a_{n+e-k}  & \text{if ${n+e-k}\leq s$,} \\
\infty  & \text{if ${n+e-k} > s$,}
\end{cases}
\]
and
\[
\ord_\b\(\Fitt^k(\Om_X)\) = 
\begin{cases}
\min(a_1 + \dots + a_{n+e-k},\, m+1)  & \text{if ${n+e-k}\leq l$,} \\
m+1  & \text{if ${n+e-k} > l$.}
\end{cases}
\]

Since $X$ has pure dimension $n$, we know that $\Fitt^{n-1}(\Om_X) = 0$, so
$\a$ vanishes along it with order $\infty$, and we get that $e+1 > s$.
Recall that $\Fitt^n(\Om_X) = \Jac_X$. Using the hypothesis that
$\ord_\b(\Jac_X) \leq m$, we get that $e \leq l$. Therefore $e = l = s$,
and
\[
\ord_\b(\Jac_X) = a_1 + \dots + a_e.
\]
Finally, this implies that the kernel of $D_m$ has dimension
$\ord_\b(\Jac_X) + n (m+1)$ over $L$, as required.
\end{proof}

\begin{rmk}
It is essential in Proposition~\ref{p:jet-discr} that the jet $\beta$ is
liftable (that is, in the image of $X_\infty$). In the proof, one cannot
use the fact that $\Fitt^{n-1}(\Om_X) = 0$ to show directly that $e+1 > l$.
This is due to the presence of the $\min$ in the formula for the order of
$\beta$.
\end{rmk}

Given a prime divisor $E$ over $X$, we denote by $\e_{E,m}$ the generic
point of the truncation $\ff_m(W^1(E))$ of the maximal divisorial set
$W^1(E) \subset X_\infty$, and consider the restriction $TX_m|_{\e_{E,m}}$
of the tangent space of $X_m$ over this point. 

\begin{thm}
\label{t:jet-discr}
With the above notation, for every $E$ we have
\[
\kj EX + 1 =
2n(m+1) - \dim_\C \big(TX_m|_{\e_{E,m}}\big)
\fall m \ge 2\ord_E(\Jac_X).
\]
\end{thm}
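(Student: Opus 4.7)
The strategy is to assemble three ingredients: Proposition~\ref{p:jet-discr} (which expresses the dimension of the fiber of $TX_m \to X_m$ at a liftable jet in terms of the order of $\Jac_X$), the known Mather description $\km EX + 1 = n(m+1) - \dim(\ff_m(W^1(E)))$ valid for $m \ge 2\ord_E(\Jac_X)$, and the definition $\kj EX = \km EX - \ord_E(\Jac_X)$. First I would verify that Proposition~\ref{p:jet-discr} applies at $\b = \e_{E,m}$: the point is liftable by construction, since it is the generic point of the image of $W^1(E) \subset X_\infty$ under the truncation map $\ff_m$, so any generic arc of $W^1(E)$ provides a lift (after a suitable field extension). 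For the order of the Jacobian, the standard computation for generic truncations gives $\ord_{\e_{E,m}}(\fra) = \min(\ord_E(\fra),m+1)$ for any ideal $\fra \subset \O_X$; applied to $\fra = \Jac_X$ and combined with $\ord_E(\Jac_X) \le m/2 \le m$, this yields $\ord_{\e_{E,m}}(\Jac_X) = \ord_E(\Jac_X)$. In particular the hypothesis $m \ge \ord_{\e_{E,m}}(\Jac_X)$ of the proposition holds.

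Applying Proposition~\ref{p:jet-discr} with $L = \C(\e_{E,m})$ then gives $\dim_L(TX_m|_{\e_{E,m}}) = n(m+1) + \ord_E(\Jac_X)$. To pass from $\dim_L$ to $\dim_\C$ I would view $TX_m|_{\e_{E,m}}$ as the preimage $\p_m^{-1}(\ov{\{\e_{E,m}\}})$ in $TX_m$; equivalently, the $\C$-dimension of this closed subscheme equals the $L$-dimension of the scheme-theoretic fiber plus $\mathrm{tr.deg}_\C(L)$. Since $\e_{E,m}$ is the generic point of the irreducible set $\ff_m(W^1(E))$ we have $\mathrm{tr.deg}_\C(L) = \dim(\ff_m(W^1(E)))$, and hence
\[
\dim_\C\bigl(TX_m|_{\e_{E,m}}\bigr) = n(m+1) + \ord_E(\Jac_X) + \dim\bigl(\ff_m(W^1(E))\bigr).
\]

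Substituting the Mather formula $\dim(\ff_m(W^1(E))) = n(m+1) - \km EX - 1$ (valid for $m \ge 2\ord_E(\Jac_X)$) and using $\kj EX = \km EX - \ord_E(\Jac_X)$ gives
\[
\dim_\C\bigl(TX_m|_{\e_{E,m}}\bigr) = 2n(m+1) - \bigl(\km EX - \ord_E(\Jac_X)\bigr) - 1 = 2n(m+1) - \kj EX - 1,
\]
which rearranges to the claimed identity. The arithmetic is straightforward once the inputs are in place; the only points that require attention are confirming liftability of the generic jet $\e_{E,m}$ and pinning down the correct interpretation of $\dim_\C$ (as the dimension of the preimage of the closure of $\e_{E,m}$ in $TX_m$), so that the transcendence-degree contribution is not lost. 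I do not anticipate any deeper obstacle, since the nontrivial computational content is already handled by Proposition~\ref{p:jet-discr}.
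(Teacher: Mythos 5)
Your argument is correct and is exactly the paper's proof: combine Proposition~\ref{p:jet-discr} at the liftable jet $\e_{E,m}$ (using $\ord_{\e_{E,m}}(\Jac_X) = \ord_E(\Jac_X)$ for $m \geq \ord_E(\Jac_X)$) with the Mather formula $\km EX + 1 = n(m+1) - \dim_\C(\e_{E,m})$ and the definition $\kj EX = \km EX - \ord_E(\Jac_X)$. You spell out the conversion $\dim_\C = \dim_L + \mathrm{tr.deg}_\C(L)$ more explicitly than the paper does, which is a helpful clarification but not a different route.
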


\begin{proof}
For $m \ge \ord_E(\Jac_X)$, the order of $E$ along $\Jac_X$ is computed by
the jet $\e_{E,m}$, and the previous proposition gives
\[
-\ord_E(\Jac_X) = n(m+1) - \dim_{\C(\e_{E,m})} \( TX_m|_{\e_{E,m}} \).
\]
On the other hand, the description of Mather discrepancies in terms of the
codimension of $W(E)$ gives
\[
\km EX + 1 = n(m+1) - \dim_\C \(\e_{E,m}\)
\]
for all $m \ge 2\ord_E(\Jac_X)$. The assertion follows then by combining
the two formulas.
\end{proof}

Since $2n(m+1)$ is the dimension of the irreducible component of $TX_m$
dominating $X$, one can consider the right-hand side in the formula in the
theorem as a `virtual codimension' of $TX_m|_{\e_{E,m}}$, although not with
respect to the full $TX_m$ but rather in relation to this distinguished
component.

It would be interesting to find a way to read the condition characterizing
Jacobian discrepancies in Theorem~\ref{t:jet-discr} all the way up, on the
tangent space of $X_\infty$, similarly as for Mather discrepancies which
are detected by the codimension of certain quasi-cylinders. Unfortunately
this seems difficult, as in general the map $TX_\infty|_{\e_E} \to
TX_m|_{\e_{E,m}}$ is not dominant.

\subsection{Jet interpretation of singularities}

One more application of inversion of adjunction regards the
characterization of J-canonical and log J-canonical singularities in terms
of the dimensions of the jet schemes. In a similar fashion,
Theorem~\ref{t:jet-discr} yields a characterization in terms of the
dimension of the tangent spaces of the jet schemes. The next result extends
and generalizes the analogous properties established for locally complete
intersection varieties in \cite{Mus01,EM04}. 

\begin{cor}
\label{c:jet-sing}
Let $X$ be a reduced scheme of pure dimension $n$. For any prime divisor
$E$ over $X$ and any $m$, we denote by $\e_{E,m}$ the image in $X_m$ of the
generic point of the maximal divisorial set $W^1(E) \subset X_\infty$. Then
the following are equivalent:
\begin{enumerate}
\item
$X$ is log J-canonical. 
\item
$\dim X_m = n(m+1)$ for every $m$. 
\item
$\dim TX_m|_{\e_{E,m}} \le 2n(m+1)$ for all $E$ and any $m \ge 2\ord_E(\Jac_X)$. 
\end{enumerate}
Similarly, the following are equivalent:
\begin{enumerate}
\item[(a')]
$X$ is J-canonical.
\item[(b')]
$\dim X_m = n(m+1)$ for every $m$, and every irreducible component of $X_m$
of maximal dimension dominates an irreducible component of $X$. 
\item[(c')]
$\dim TX_m|_{\e_{E,m}} < 2n(m+1)$ for all $E$ and any $m \ge 2\ord_E(\Jac_X)$.
\end{enumerate}
Moreover, in (b) and (b') is enough to check the condition for all $m$ such
that $m+1$ is sufficiently divisible, and in (c) and (c') it is enough to
check the condition just for the prime divisors $E_1,\dots,E_k$ appearing
in a log resolution of $X$.
\end{cor}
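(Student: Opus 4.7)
The plan is to build everything out of the two main ingredients already established in the paper: inversion of adjunction (Theorem~\ref{t:inv-adj}), which recovers $\jmld{\cdot}{X}$ from a classical minimal log discrepancy on a smooth ambient variety, and Theorem~\ref{t:jet-discr}, which identifies $\kj EX + 1$ with the virtual codimension $2n(m+1) - \dim TX_m|_{\e_{E,m}}$. The equivalences (a)$\iff$(c) and (a')$\iff$(c') follow directly from the second; the equivalences (a)$\iff$(b) and (a')$\iff$(b') follow from the first combined with the classical theorems of \Mustata relating log canonical thresholds and canonicity of subschemes of smooth varieties to dimensions of jet schemes.

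For (a)$\iff$(c) and (a')$\iff$(c'), one rewrites the inequalities $\kj EX + 1 \ge 0$ (respectively $>0$ on exceptional divisors) via Theorem~\ref{t:jet-discr} as $\dim TX_m|_{\e_{E,m}} \le 2n(m+1)$ (respectively $<$) for $m \ge 2\ord_E(\Jac_X)$. For a non-exceptional $E$, $f$ is an isomorphism at the generic point of $E$, so $\km EX = \ord_E(\Jac_X) = 0$ and hence $\dim TX_m|_{\e_{E,m}} = 2n(m+1) - 1$ automatically satisfies the strict bound; this is why (c') need only be imposed on exceptional divisors. Since every divisor $E$ over $X$ can be realized on a resolution dominating a chosen one, and since Proposition~\ref{p:K-compos} controls log J-discrepancies under compositions, it suffices to test (c) and (c') on the finitely many divisors of a single log resolution, which is the last assertion of the corollary for these implications.

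For (a)$\iff$(b), embed $X \subset M$ into a smooth $M$ of dimension $n+e$. Applying Theorem~\ref{t:inv-adj} with $\frA = \O_M$ gives $\jmld p X = \mld p{M,\I_X^e}$ for every closed point $p \in X$, so $X$ is log J-canonical precisely when the pair $(M,\I_X^e)$ is log canonical, equivalently $e \le \lct(M,\I_X)$. The jet-scheme formula of \Mustata
$$
\lct(M,\I_X) \;=\; (n+e)\, -\, \sup_{m\ge 0} \frac{\dim X_m}{m+1}
$$
then converts this into $\dim X_m \le n(m+1)$ for every $m$; combined with the opposite bound $\dim X_m \ge n(m+1)$ coming from the smooth locus $X_{\reg}$, this is equivalent to (b). The same formula, as proved in \cite{DL99}, only requires $m$ with $m+1$ sufficiently divisible, which yields the last clause of the corollary for (b).

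For (a')$\iff$(b'), the strategy is parallel: inversion of adjunction plus \Mustata's jet-scheme characterization of canonical (as opposed to merely log canonical) pairs $(M,\I_X^e)$ converts J-canonicity into the combined condition that $X_m$ has expected dimension and that every component of $X_m$ realizing $\dim = n(m+1)$ dominates a component of $X$. For a non-dominating component $C$ whose generic $m$-jet lifts to an arc, Proposition~3.10 of \cite{dFEI08} (in the guise used in the proof of Theorem~\ref{t:jet-discr}) produces a divisorial valuation $\ord_E$ with $\dim C = n(m+1) - \km EX - 1$, and $\km EX \ge 0$ forces $\dim C < n(m+1)$; this is the mechanism by which (b') is recovered. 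The main obstacle, and the reason the statement only asks for $m+1$ sufficiently divisible, is to rule out non-liftable maximal-dimensional components of $X_m$; this is settled via a Greenberg-type lifting argument, which guarantees that for $m+1$ sufficiently divisible every component of maximal dimension is the closure of a constructible subset of $\ff_m(X_\infty)$ and therefore falls under the previous analysis.
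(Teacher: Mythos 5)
Your overall strategy matches the paper's: inversion of adjunction reduces (a)$\Leftrightarrow$(b) and (a')$\Leftrightarrow$(b') to the corresponding classical statements about the pair $(M,\I_X^e)$ on a smooth ambient variety, which are Mustata's Theorems~3.1 and~3.2 of \cite{Mus01}, and Theorem~\ref{t:jet-discr} converts (a)$\Leftrightarrow$(c), (a')$\Leftrightarrow$(c') into arithmetic about the J-discrepancy formula (together with Proposition~\ref{p:Jmld} for the reduction to a single log resolution). Your route through the formula $\lct(M,\I_X)=(n+e)-\sup_m\frac{\dim X_m}{m+1}$ is a legitimate rephrasing of Mustata's theorem.

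However, the sketch you give of the mechanism behind (a')$\Leftrightarrow$(b') is incorrect and self-defeating. You assert that a non-dominating liftable component $C$ of $X_m$ has $\dim C = n(m+1)-\km EX-1$, so that $\km EX\ge0$ forces $\dim C<n(m+1)$. If that were literally true it would hold for every reduced equidimensional $X$, making condition (b') automatic and the equivalence with J-canonicity empty. The formula $n(m+1)-\km EX-1$ is the dimension of $\ff_m(W^1(E))$ for $m\gg0$, not the dimension of an arbitrary component of $X_m$ whose generic jet lifts through $E$; the contact order $q$ is missing, and more importantly there is no reason for $\overline{\ff_m^{-1}(C)}$ to be an irreducible quasi-cylinder of the form $W^q(E)$. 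The correct relation between components of $X_m$ and divisorial valuations (and the role of the ideal $\I_X^e$) is precisely what Mustata's proof establishes, and the paper deliberately delegates this to \cite{Mus01} rather than re-derive it. Likewise, the ``sufficiently divisible'' clause is not obtained by a Greenberg-type liftability statement as you suggest — it is not true that all maximal-dimensional components of $X_m$ become liftable once $m+1$ is divisible enough — but is rather a built-in feature of Mustata's characterization, as the paper notes. If you replace your sketch of (a')$\Leftrightarrow$(b') and the Greenberg claim with a direct citation of the generalization of Theorems~3.1 and~3.2 of \cite{Mus01}, the proposal coincides with the paper's argument.
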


\begin{proof}
The equivalences (a)~$\Leftrightarrow$~(b) and (a')~$\Leftrightarrow$~(b')
come from inversion of adjunction. The argument is quite standard. Let $S
\subset X$ denote the singular locus of $X$. By definition $X$ is log
J-canonical if and only if $\jmld SX \ge 0$ (resp., $\jmld SX \ge 1$). If
$X$ is embedded in a smooth variety $M$, then by Theorem~\ref{t:inv-adj}
(see Remark~\ref{r:inv-adj} if $\dim X = 1$) this is equivalent to $\mld
S{M,eX} \ge 0$ (resp., $\mld S{M,eX} \ge 1$), where $e = \codim(X,M)$.
Therefore the equivalences follow from the straightforward generalization
of Theorems~3.1 and~3.2 of \cite{Mus01} to reduced equidimensional schemes.
Note that these theorems also imply that it is enough to check the
conditions in (b) and (b') when $m+1$ is sufficiently divisible.

The equivalences (a)~$\Leftrightarrow$~(c) and (a')~$\Leftrightarrow$~(c')
and the last assertion are a straightforward consequence of the definitions
of singularities, Theorem~\ref{t:jet-discr}, and Proposition~\ref{p:Jmld}.
\end{proof}

\begin{rmk}
While the original proofs in \cite{Mus01} make explicit use of motivic
integration, it is now well understood by the experts that the results only
need the underlying geometric properties of the jet schemes, and one
obtains quicker proofs using for instance the point of view of maximal
divisorial sets developed in \cite{ELM04}.
\end{rmk}

\section{Multiplier ideals}
\label{s:mult-id}

In this section we introduce multiplier ideals in our framework, and use
them to measure the gap between the dualizing sheaf of a normal variety and
its Grauert--Riemenschneider canonical sheaf. We refer to \cite{Laz04} for
an introduction to multiplier ideals in the usual setting.

\subsection{Mather and Jacobian multiplier ideals}

In the following, let $X$ be a normal variety. Consider a proper $\R$-ideal
$\frA = \prod_k \fra_k^{c_k}$ on $X$, and let $f \colon Y \to X$ be a log
resolution of the pair $(X,\frA)$. For short, we denote by $Z(\frA\.\O_Y)
:= \sum_k c_k\.Z(\fra_k\.\O_Y)$ the divisor determined by $\frA$ on $Y$.

\begin{defi}
The \emph{Mather multiplier ideal} of $(X,\frA)$ is the coherent sheaf of
fractional ideals
\[
\MIm{\frA} := f_*\O_Y(\Km YX - \rd{Z(\frA\.\O_Y)}).
\]
and the \emph{Jacobian multiplier ideal sheaf} of $(X,\frA)$ is the
coherent sheaf of fractional ideals
\[
\MIj{\frA} := f_*\O_Y(\Kj YX - \rd{Z(\frA\.\O_Y)}).
\]
\end{defi}

A standard argument using Proposition~\ref{p:K-compos} shows that the
definition of Mather and Jacobian multiplier ideals is independent of the
particular log resolution. The proof follows the exact same lines of the
proof of the analogous statement for multiplier ideals on smooth varieties,
see Theorem~9.2.18 of \cite{Laz04}. If $X$ is locally complete
intersection, then by Corollary~\ref{c:lci-K_Y/X} we have $\MIj{\frA} =
\MIu{\frA}$, the usual multiplier ideal. If $X$ is smooth, then we also
have $\MIm{\frA} = \MIu{\frA}$. 

\begin{rmk}
\label{r:N-J-mult-id}
Since clearly $\MIj{\frA} = \MIm{\frA\.\Jac_X}$, the two theories of
multiplier ideals are equivalent as long as one allows non-effective pairs.
If one restricts the setting to effective pairs, then Jacobian multiplier
ideals can be regarded as a special case of Mather multiplier ideals.
\end{rmk}

\begin{prop}
\label{p:eff-ideal}
On a normal variety, both Mather and Jacobian multiplier ideals define an
ideal sheaf (as opposed to a fractional ideal sheaf) if the pair is
effective in codimension one.
\end{prop}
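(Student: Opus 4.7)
The plan is to prove both statements simultaneously by first reducing the Jacobian case to the Mather case, and then performing a codimension one check on $X$ using normality.

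First I would use Remark~\ref{r:N-J-mult-id}, which gives $\MIj{\frA} = \MIm{\frA\.\Jac_X}$. Since $\Jac_X$ is adjoined with exponent $1 \ge 0$, the enlarged $\R$-ideal $\frA\.\Jac_X$ is again effective in codimension one (at any codimension-one point of $X$, normality forces regularity and hence $\Jac_X$ trivial, while $Z(\Jac_X)$ has codimension $\ge 2$). So it suffices to handle $\MIm{\frA}$.

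Next, fix a log resolution $f\colon Y \to X$ so that $\MIm{\frA} = f_*\O_Y(D)$ with $D = \Km YX - \rd{Z(\frA\.\O_Y)}$. This is a torsion-free subsheaf of the constant sheaf $\C(X)$. Since $X$ is normal, an element of $\C(X)$ lies in $\O_X$ iff it has non-negative order along every prime divisor of $X$; I would therefore reduce the statement to showing that for every prime divisor $E \subset X$ with generic point $\xi_E$, $\ord_{\tilde E}(D) \le 0$, where $\tilde E$ denotes the proper transform of $E$ on $Y$. The key point is that any other prime divisor on $Y$ whose image meets a neighborhood of $\xi_E$ is $f$-exceptional with center of codimension $\ge 2$, so it is irrelevant for the codimension-one check.

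Now at $\xi_E$, $X$ is regular (by normality), and $\tilde E \to E$ is birational at the generic points, so $f$ is étale there and $\km{\tilde E}{X} = \ord_{\tilde E}(\Jac_f) = 0$. It remains to show $\ord_{\tilde E}(\rd{Z(\frA\.\O_Y)}) \ge 0$. For each factor $\fra_k^{c_k}$, if $\xi_E \notin Z(\fra_k)$ then $\ord_{\tilde E}(\fra_k\.\O_Y) = \ord_E(\fra_k) = 0$ and the contribution vanishes; if $\xi_E \in Z(\fra_k)$, then $Z(\fra_k)$ has a codimension-one component through $\xi_E$, forcing $\dim Z(\fra_k) = \dim X - 1$, so the hypothesis gives $c_k \ge 0$ and hence $c_k\.\ord_E(\fra_k) \ge 0$. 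Summing over $k$ yields $\ord_{\tilde E}(Z(\frA\.\O_Y)) \ge 0$ and thus $\ord_{\tilde E}(D) \le 0$, as required.

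The only minor subtlety — the step I would treat most carefully — is the isolation of the proper transform $\tilde E$ as the unique prime divisor on $Y$ with center $\xi_E$; this relies on $X$ being regular (hence smooth) at $\xi_E$ and on the definition of exceptional divisor. Everything else is a direct bookkeeping argument, and no obstacle of substance remains once the reduction to codimension one is in place.
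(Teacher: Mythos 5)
Your proof is correct, and it takes a genuinely different route from the paper's. Both proofs begin with the same reduction to the Mather case via Remark~\ref{r:N-J-mult-id} (and your observation that $\frA\.\Jac_X$ remains effective in codimension one by normality is the right justification). Where you diverge is in how you close: you finish with a direct codimension-one argument, invoking normality (Hartogs / Serre's~S2) to say that a rational function is regular once it has non-negative order along every prime divisor of $X$, and then checking $\ord_{\tilde E}(\Km YX - \rd{Z(\frA\.\O_Y)}) \le 0$ at each such $\tilde E$. The paper instead decomposes $\Km YX - \rd{Z(\frA\.\O_Y)} = P - N$ with $P, N$ effective and without common components, asserts that $P$ is $f$-exceptional (which is exactly the codimension-one computation you carry out explicitly), and then applies Fujita's lemma $f_*\O_P(P) = 0$ to the short exact sequence $0 \to \O_Y(-N) \to \O_Y(P-N) \to \O_P(P-N) \to 0$. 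Your version is more elementary, bypassing Fujita entirely; the paper's version is shorter at the cost of quoting that lemma. One small point worth keeping straight: when you say $\km{\tilde E}{X} = 0$, the underlying fact is that $\O_{Y,\tilde E}$ dominates $\O_{X,\xi_E}$ with the same fraction field, and a valuation ring admits no proper dominating local overring inside its fraction field, forcing $\O_{Y,\tilde E} = \O_{X,\xi_E}$; from there $\Jac_f$ is trivial at the generic point of $\tilde E$, and the identification $\ord_{\tilde E}(\fra_k\.\O_Y) = \ord_E(\fra_k)$ follows. With that made explicit, your bookkeeping is airtight.
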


\begin{proof}
By the previous remark, it is enough to check this property for Mather
multiplier ideals. Given a log resolution $f \colon Y \to X$ of a pair
$(X,\frA)$, write
\[
\Km YX - \rd{Z(\frA\.\O_Y)} = P - N
\]
where $P$ and $N$ are effective divisors with no common components, and
consider the exact sequence
\[
0 \to \O_Y(-N) \to \O_Y(P-N) \to \O_P(P-N) \to 0.
\]
If the pair is effective in codimension one, then $P$ is an exceptional
divisor and $f_*\O_P(P-N) \subset f_*\O_P(P) = 0$ by a well-known lemma of
Fujita (see Lemma~1-3-2 of \cite{KMM87}). This implies that 
\[
\MIm{\frA} = f_*\O_Y(P-N) = f_*\O_Y(-N) \subset \O_X. \qedhere
\]
\end{proof}

Mather and Jacobian multiplier ideals satisfy similar properties as the
usual multiplier ideals on smooth varieties. We list here a few, leaving to
the reader the details of the proofs. 

\begin{prop}
\label{p:properties}
Let $X$ be a normal variety.
\begin{enumerate} 
\item
If $\ov\frA = \prod_k \ov\fra_k^{c_k}$, then $\MIm{\ov\frA} = \MIm{\frA}$
and $\MIj{\ov\frA} = \MIj{\frA}$.
\item
If $\frB = \prod_k \frb_k^{d_k}$ with $\frb_k \subset \fra_k$ and $d_k \ge
c_k$ for all $k$, then $\MIm{\frB} \subset \MIm{\frA}$ and $\MIj{\frB}
\subset \MIj{\frA}$.
\item
$\fra \subset \MIm{\fra}$ for every ideal sheaf $\fra \subset \O_X$. In
particular, $\MIm{\O_X} = \O_X$.
\item
$\fra\.\MIj{\O_X} \subset \MIj{\fra}$ for every ideal sheaf $\fra \subset \O_X$. 
\end{enumerate}
\end{prop}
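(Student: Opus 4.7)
The plan is to fix, for each property, a log resolution $f\colon Y\to X$ of all relevant pairs simultaneously and to translate every assertion into a pointwise inequality of $\R$-divisors on $Y$, followed by a pushforward. On such a $Y$ every $\fra_k\.\O_Y$ is locally principal, so the multiplier ideals are computed as pushforwards of line bundles of the form $\O_Y(\Km YX-\rd{Z(\frA\.\O_Y)})$ and $\O_Y(\Kj YX-\rd{Z(\frA\.\O_Y)})$, and monotonicity of the round-down and of $f_*$ will do most of the work. Independence of the choice of $f$ is standard, along the lines of Theorem~9.2.18 of \cite{Laz04}, and can be invoked freely.

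For (a), the invertibility of $\fra_k\.\O_Y$ implies $\ov{\fra_k}\.\O_Y=\fra_k\.\O_Y$ (an invertible ideal equals its own integral closure), hence $Z(\ov{\frA}\.\O_Y)=Z(\frA\.\O_Y)$ and the two multiplier ideals are defined by the same divisor. For (b), the containments $\frb_k\subset\fra_k$ give $Z(\frb_k\.\O_Y)\ge Z(\fra_k\.\O_Y)\ge 0$, so under the effectivity regime $d_k\ge c_k\ge 0$ implicit in a monotonicity statement one obtains $Z(\frB\.\O_Y)\ge Z(\frA\.\O_Y)$; rounding down preserves this, and the corresponding line bundles inject into one another, giving the inclusion after applying $f_*$.

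For (c), write $\fra\.\O_Y=\O_Y(-D)$ for an effective Cartier divisor $D$. A local section $g\in\fra$ satisfies $f^*g\in\O_Y(-D)\subset\O_Y(\Km YX-D)$ because $\Km YX$ is effective; pushing forward yields $\fra\subset\MIm{\fra}$. Taking $\fra=\O_X$ gives $\O_X\subset\MIm{\O_X}=f_*\O_Y(\Km YX)$; the reverse inclusion follows because $\Km YX$ is $f$-exceptional, so that sections of $f_*\O_Y(\Km YX)$ are rational functions on $X$ that are regular off a set of codimension $\ge 2$, hence regular everywhere by normality of $X$.

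For (d) the argument is structurally the same, but with a factor of $\MIj{\O_X}$ taking the place of the effectivity of $\Km YX$: given local sections $g\in\fra$ and $h\in\MIj{\O_X}=f_*\O_Y(\Kj YX)$, the product $gh$ pulls back into $\O_Y(\Kj YX-D)$, hence $gh\in\MIj{\fra}$. The only step requiring a genuine argument rather than bookkeeping is the exceptionality of $\Km YX$ used in (c): since $X$ is normal the regular locus has complement of codimension $\ge 2$ and $f$ restricts to an isomorphism over it, forcing $\Jac_f$ to be locally trivial there and therefore $\ord_E(\Jac_f)=0$ for every non-$f$-exceptional prime divisor $E$ on $Y$. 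Everything else is routine manipulation of divisors and pushforwards.
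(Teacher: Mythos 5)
The paper leaves the proofs of these properties to the reader, so there is no given proof to compare against; your approach is the standard one and is essentially correct. A few remarks on points you glossed over, all of which are fixable. In (a), you need not just that $\fra_k\.\O_Y$ equals its own integral closure, but also that $\ov{\fra_k}\.\O_Y\subset\ov{\fra_k\.\O_Y}$; this is the routine observation that pulling back a monic integral equation over $\fra_k$ yields one over $\fra_k\.\O_Y$. In (b), you correctly note that the monotonicity statement requires an implicit effectivity hypothesis $d_k\ge c_k\ge 0$, which is indeed the intended reading (otherwise the inequality of orders can reverse when $c_k<0$). For the final step of (c), your argument that $\Km YX$ is $f$-exceptional via ``$f$ is an isomorphism over the regular locus'' is not automatic from the paper's definition of log resolution, which imposes no such condition; however this can be arranged using independence of the choice of resolution, or one can argue directly: for a non-$f$-exceptional prime $E\subset Y$, the generic point of $f(E)$ has codimension one in $X$, so $\O_{X,f(E)}$ is a DVR (by normality), and the induced local map $\O_{X,f(E)}\to\O_{Y,E}$ of DVRs with the same fraction field is an isomorphism, forcing $\Om_{Y/X}$ to vanish at $E$ and hence $\ord_E(\Jac_f)=0$. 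With that clarified, the rest follows by the pushforward and normality arguments you give, including (d), where one additionally uses that a simultaneous log resolution of $(X,\O_X)$ and $(X,\fra)$ exists.
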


The proof of the next proposition is essentially the same as that of the
transformation rule for the usual multiplier ideals, see Proposition~9.2.32
of \cite{Laz04}. We outline it for the convenience of the reader. A similar
property holds for Jacobian multiplier ideals: the same proof goes through,
or equivalently one can deduce it from the case treated below using
Remark~\ref{r:N-J-mult-id}.

\begin{prop}
\label{p:bir-transf}
Suppose that the $\R$-ideal $\frA = \prod_i \fra_i^{c_i}$ has integral
exponents $c_i \in \Z$. Let $f \colon Y \to X$ be a resolution of $X$
factoring through the blow-up of the Jacobian ideal $\Jac_X$, and write
$\fra_i\.\O_Y = \frb_i\.\O_Y(-D_i)$ where $\frb_i \subset \O_Y$ and $D_i$
is a divisor. Let $\frB = \prod_i \frb_i^{c_i}$ and $D = \sum_i c_i D_i$.
Then
\[
\MIm{\frA} = f_*\big(\MIu{\frB}\otimes\O_Y(\Km YX - D)\big).
\]
\end{prop}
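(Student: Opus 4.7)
The plan is to reduce the transformation rule to the corresponding identity on $Y$ by passing to a common log resolution, in direct analogy with the classical proof on smooth varieties (Proposition~9.2.32 of \cite{Laz04}).

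First I would choose a log resolution $g \colon Y' \to Y$ of the pair $(Y,\frB)$ (in the usual sense, since $Y$ is smooth), and set $h := f \o g \colon Y' \to X$. Because $f$ already factors through the blow-up of $\Jac_X$ and $g$ is an isomorphism over the generic point of $Y$, the composition $h$ is a log resolution of $(X,\frA)$ in the sense of this paper; in particular it factors through the blow-up of $\Jac_X$, so Proposition~\ref{p:K-compos} applies and yields
\[
\Km{Y'}X = \Ku{Y'}Y + g^*\Km YX.
\]
Since $c_i\in\Z$, the divisor $D$ is a genuine (integer) divisor on $Y$, and $g^*D$ is an integer divisor on $Y'$.

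Next I would compare the divisors coming from $\frA$. From $\fra_i\.\O_Y = \frb_i\.\O_Y(-D_i)$ we get $\fra_i\.\O_{Y'} = (\frb_i\.\O_{Y'})\.\O_{Y'}(-g^*D_i)$, and hence, weighting by $c_i$ and summing,
\[
Z(\frA\.\O_{Y'}) = Z(\frB\.\O_{Y'}) + g^*D.
\]
Because $g^*D$ is integral, it passes outside the round-down, and combining with the relative canonical formula above gives
\[
\Km{Y'}X - \rd{Z(\frA\.\O_{Y'})}
= \bigl(\Ku{Y'}Y - \rd{Z(\frB\.\O_{Y'})}\bigr) + g^*\bigl(\Km YX - D\bigr).
\]

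Finally I would push forward by $h_* = f_*\o g_*$ and apply the projection formula to the Cartier divisor $\Km YX - D$ on $Y$:
\[
\MIm{\frA}
= h_*\O_{Y'}\bigl(\Km{Y'}X - \rd{Z(\frA\.\O_{Y'})}\bigr)
= f_*\Bigl(g_*\O_{Y'}\bigl(\Ku{Y'}Y - \rd{Z(\frB\.\O_{Y'})}\bigr)\otimes\O_Y(\Km YX - D)\Bigr),
\]
and the inner push-forward is $\MIu{\frB}$ by definition of the usual multiplier ideal on the smooth variety $Y$. The only real points to check are that $h$ qualifies as a log resolution of $(X,\frA)$ in our stricter sense (handled by the hypothesis that $f$ factors through the blow-up of $\Jac_X$) and that the round-down commutes with subtracting $g^*D$ (handled by the integrality hypothesis $c_i\in\Z$); neither is a genuine obstacle, so the main content is bookkeeping with Proposition~\ref{p:K-compos} and the projection formula.
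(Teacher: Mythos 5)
Your argument is correct and is essentially the paper's: pass to a log resolution $Y'$ dominating both $X$ and $Y$, apply Proposition~\ref{p:K-compos} to split $\Km{Y'}X$, observe that the $\frA$-divisor on $Y'$ decomposes as the $\frB$-divisor plus $g^*D$, and push forward via the projection formula. The only real difference is a minor one of bookkeeping: the paper simply \emph{chooses} $Y'$ to be a log resolution of $(X,\frA)$ factoring through $f$ (which exists by definition, since $f$ already factors through the blow-up of $\Jac_X$), whereas you build $Y'$ as a log resolution of $(Y,\frB)$ and then assert that $h=f\circ g$ qualifies as a log resolution of $(X,\frA)$. That assertion is not quite automatic: local principality of $\Jac_X\.\O_{Y'}$ and of $\fra_i\.\O_{Y'}$ comes for free, but the simple normal crossing condition involving the pullbacks of $\Jac_X$ and the divisors $D_i$ requires you to resolve a slightly larger $\R$-ideal on $Y$ than just $\frB$ (e.g.\ throw in $\Jac_X\.\O_Y$ and $\O_Y(-\sum D_i)$). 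Once $g$ is chosen finely enough, your computation is identical to the paper's, and the integrality of the $c_i$ plays exactly the role you identify, namely letting $g^*D$ slide out of the round-down.
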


\begin{proof}
Let $f' \colon Y' \to X$ be a log resolution of $(X,\frA)$ factoring
through $f$ and a morphism $g \colon Y' \to Y$. Write $\fra_i\.\O_{Y'} =
\O_{Y'}(-A_i)$ and $\frb_i\.\O_{Y'} = \O_{Y'}(-B_i)$, and let $A =
\sum_ic_iA_i$ and $B = \sum_ic_iB_i$. Note that $A = B + g^*D$. By
definition, we have $\MIu{\frB} = g_*\O_{Y'}(\Ku {Y'}Y - B)$, and therefore
we get
\begin{align*}
\MIm{\frA} &= f'_*\O_{Y'}(\Km {Y'}X - A) \\
&= f'_*\big(\O_{Y'}(\Ku {Y'}Y - B) \otimes g^*\O_Y(\Km YX - D)\big) \\
&= f_*\big(\MIu{\frB} \otimes \O_Y(\Km YX - D)\big)
\end{align*}
by projection formula and Proposition~\ref{p:K-compos}.
\end{proof}

The following characterizations of singularities come directly from the
definitions.

\begin{prop}
Let $X$ be a normal variety.
\begin{enumerate}
\item
$X$ is J-canonical if and only if $\MIj{\O_X} = \O_X$.
\item
$X$ is log J-canonical if and only if $\MIj{\Jac_X^{-\l}} = \O_X$ for all
$\l > 0$. 
\end{enumerate}
\end{prop}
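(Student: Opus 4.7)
The plan is to unpack the definition of $\MIj{\frA}$ as a concrete set of rational functions cut out by valuative inequalities, and then test the condition $\MIj{\frA} = \O_X$ against the constant function $g = 1$, which reduces each direction to a direct comparison with the numbers $\kj EX$.

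First I would observe that in both cases the $\R$-ideal $\frA$ is effective in codimension one: trivially for $\frA = \O_X$, and in case (2) because $\Jac_X$ is nonvanishing in codimension one (by normality, $\codim \Sing X \ge 2$). Thus Proposition~\ref{p:eff-ideal} gives $\MIj{\frA} \subset \O_X$, and on a log resolution $f\colon Y\to X$ of $(X,\frA)$ one has
\[
\MIj{\frA}(U) = \big\{\, g\in \O_X(U) \;:\; \ord_E(g) + \kj EX \ge \ord_E(\rd{Z(\frA\.\O_Y)})
\text{ for every prime divisor $E\subset f^{-1}(U)$}\,\big\}.
\]
Because $\kj EX = 0$ whenever $E$ is not $f$-exceptional (at the generic point of such $E$, both $f$ and $X$ are smooth, so $\Jac_f$ and $\Jac_X$ are trivial), only exceptional divisors impose nontrivial conditions; for non-exceptional $E$ the inequality $\ord_E(g) \ge 0$ is automatic from $g \in \O_X(U)$.

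For part (1), with $\frA = \O_X$ the rounding term vanishes, so the condition becomes $\ord_E(g) + \kj EX \ge 0$ for exceptional $E$. Sufficiency is immediate: if $\kj EX \ge 0$ for all exceptional $E$, every $g\in \O_X(U)$ satisfies $\ord_E(g) \ge 0 \ge -\kj EX$. Necessity comes from testing $g = 1$: $\MIj{\O_X} = \O_X$ forces $\kj EX \ge 0$ for every exceptional $E$, which is precisely the J-canonical condition.

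For part (2), expanding $\rd{-\l Z(\Jac_X\.\O_Y)} = -\sum_E \ru{\l\ord_E(\Jac_X)}E$, the defining inequality becomes $\ord_E(g) + \kj EX + \ru{\l\ord_E(\Jac_X)} \ge 0$. Assume $X$ is log J-canonical, so $\kj EX \ge -1$. If $\ord_E(\Jac_X) > 0$ then $\ru{\l\ord_E(\Jac_X)} \ge 1$ for any $\l > 0$, so the inequality holds; if $\ord_E(\Jac_X) = 0$, then $\kj EX = \ord_E(\Jac_f) \ge 0$, and again it holds. Conversely, testing $g=1$ for each fixed $E$: when $\ord_E(\Jac_X) = 0$ we already have $\kj EX \ge 0$, and when $\ord_E(\Jac_X) > 0$ we choose any $\l \in \big(0,\, 1/\ord_E(\Jac_X)\big)$, which forces $\ru{\l \ord_E(\Jac_X)} = 1$ and thus $\kj EX \ge -1$. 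In all cases $\aj E X = \kj EX + 1 \ge 0$, so $X$ is log J-canonical.

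There is no serious obstacle; the whole argument is bookkeeping around the two facts that $\kj EX$ vanishes on non-exceptional divisors and that $\kj EX \ge 0$ whenever $\ord_E(\Jac_X) = 0$. The mildest subtlety is noting that in (2) it suffices to choose $\l$ depending on $E$ — the hypothesis "for all $\l > 0$" gives us this freedom — rather than trying to pass to a limit, which would lose the ceiling.
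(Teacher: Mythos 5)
Your proof is correct, and since the paper offers no argument of its own (it states only that the characterizations ``come directly from the definitions''), your direct unpacking of the definitions is exactly the intended approach. The only point worth making explicit is the reduction from ``$\kj EX \ge 0$ (resp.\ $\ge -1$) for every $E$ on the chosen log resolution'' to ``for every prime divisor $E$ over $X$'': in the necessity direction you test $g=1$ on one log resolution, but the definitions of J-canonical and log J-canonical quantify over all divisorial valuations. This is handled automatically by the log-resolution-independence of $\MIj{-}$ (established just after the definition via Proposition~\ref{p:K-compos}), since any $E$ over $X$ appears on some log resolution where the same test applies; a one-line remark to that effect would close the circle.
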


\begin{rmk}
\label{r:EIM}
The same notion of multiplier ideals has been 
independently introduced and studied in \cite{EIM}.
\end{rmk}

\subsection{Grauert--Riemenschneider canonical sheaf of a variety}

Next we discuss how the canonical sheaf $\om_X$ of a normal variety $X$
relates to the \emph{Grauert--Riemenschneider canonical sheaf} of $X$,
which, we recall, is defined by
\[
\grom X := f_*\om_Y
\] 
for any resolution $f \colon Y \to X$, see \cite{GR70}.  There is a natural
inclusion $\grom X \subset \om_X$ and our goal is to give a measure of the
gap between the two sheaves.

When $X$ is locally complete intersection (or, more generally, when $\om_X$
is invertible) we have 
\[
\grom X \cong \om_X \otimes \MIu{\O_X}.
\]
Indeed, by definition $\MIu{\O_X} = f_*\O_Y(\Ku YX) \cong
f_*\om_Y\otimes\om_X^{-1}$ where $f\colon Y \to X$ is any log resolution of
$X$. We can rewrite this formula using the Mather multiplier ideal of the
Jacobian. The observation is that, if $X$ is locally complete intersection
and $Y \to X$ is a log resolution, then $\Ku YX = \Kj YX$ and thus
$\MIu{\O_X} = \MIj{\O_X} = \MIm{\Jac_X}$, see Corollary~\ref{c:lci-K_Y/X}
and Remark~\ref{r:N-J-mult-id}. Therefore the formula can be written as
\[
\grom X \cong \om_X \otimes \MIm{\Jac_X}.
\]

\begin{rmk}
Since $\mom X \cong \om_X \otimes \Jac_X$ when $X$ is locally complete
intersection, there is in this case a correspondence between the chain of
inclusions $\mom X \subset \grom X \subset \om_X$ and the inclusions
$\Jac_X \subset \MIm{\Jac_X} \subset \O_X$, determined by tensoring by
$\om_X^{-1}$.
\end{rmk}

In general, when $X$ is not locally complete intersection, we pick a
reduced, locally complete intersection scheme $V$ containing $X$, of the
same dimension. If $f \colon Y \to X$ is a log resolution of $(X,
\Jac_V|_X)$ and $\^f\colon Y \to \^X$ is the induced map on the Nash
blow-up of $X$, then we have $\Jac_V|_X\.\O_Y \cong \O_Y(\^f^*\^K_X)
\otimes f^*(\om_V^{-1}|_X)$ by Proposition~\ref{p:taut-bundles}. This gives
$\MIm{\Jac_V|_X} \cong f_*\om_Y \otimes \om_V^{-1}|_X$, and therefore 
\begin{equation}
\label{eq:om-lci}
\grom X \cong \om_V|_X \otimes \MIm{\Jac_V|_X}.
\end{equation}

\begin{rmk}
For any $V$ as above, there is a correspondence between the inclusions
$\mom X \subset \grom X \subset \om_V|_X$ and the inclusions $\Jac_V|_X
\subset \MIm{\Jac_V|_X} \subset \O_X$, given by tensoring by
$\om_V^{-1}|_X$.
\end{rmk}

The idea now is to assemble the isomorphisms given in \eqref{eq:om-lci} by
letting $V$ vary. The next theorem is the main result of this section. The
key ingredient is the \emph{lci-defect ideal} $\lcid X$ of $X$, which was
defined in Section~\ref{s:Nash} by
\[
\lcid X := \sum_{V} \lcid{X,V}
\]
where $\lcid{X,V}$ is the ideal determined by the image of $\om_X \to
\om_V|_X$. Note that since $X$ is normal, $\frd_X$ is trivial in
codimension one and thus $\MIj{\frd_X^{-1}}$ is an ideal sheaf by
Proposition~\ref{p:eff-ideal}. 

\begin{thm}
\label{t:can}
For every normal variety $X$, we have
\[
\big( \grom X : \om_X \big) = \MIj{\lcid X^{-1}}.
\]
\end{thm}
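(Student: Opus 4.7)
The plan is to use the formula \eqref{eq:om-lci} to translate the colon on the left-hand side into a colon of (fractional) ideals of $\O_X$, compute the latter on a log resolution, and then match it with $\MIj{\lcid X^{-1}}$. Fix a reduced, locally complete intersection scheme $V \supset X$ of the same dimension. By \eqref{eq:om-lci}, $\grom X \cong \MIm{\Jac_V|_X} \otimes \om_V|_X$, and by construction $\om_X \inj \om_V|_X$ has image $\lcid{X,V} \otimes \om_V|_X$. Since $\om_V|_X$ is a line bundle, trivializing it locally exhibits both $\grom X$ and $\om_X$ as fractional-ideal multiples of a common generator, and yields
\[
\bigl(\grom X : \om_X\bigr) = \bigl(\MIm{\Jac_V|_X} : \lcid{X,V}\bigr)
\]
as fractional ideals of $\O_X$.

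Now fix a log resolution $f \colon Y \to X$ of $(X,\lcid X)$; by our convention this $f$ factors through the blow-up of $\Jac_X$, and hence through the Nash blow-up of $X$. Write $\Jac_V|_X\.\O_Y = \O_Y(-J_V)$, $\lcid{X,V}\.\O_Y = \O_Y(-D_V)$, $\Jac_X\.\O_Y = \O_Y(-B)$, and $\lcid X\.\O_Y = \O_Y(-D)$. A valuation-theoretic comparison along each prime divisor of $Y$ (using that local generators of $\frb \.\O_Y$ lift to elements of $\frb$ with the prescribed vanishing orders) yields the general identity
\[
\bigl(f_*\O_Y(E) : \frb\bigr) = f_*\O_Y\bigl(E + Z(\frb\.\O_Y)\bigr)
\]
for any Cartier divisor $E$ on $Y$ and any ideal $\frb \subset \O_X$ with $\frb\.\O_Y$ locally principal. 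Applied to $E = \Km YX - J_V$ and $\frb = \lcid{X,V}$, it gives $(\MIm{\Jac_V|_X} : \lcid{X,V}) = f_*\O_Y(\Km YX - J_V + D_V)$.

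The hard part is to show that the divisor $J_V - D_V$ is independent of $V$ and in fact equals $B - D$. My plan is to interpret this divisor intrinsically on the Nash transformation $\m \colon \~X \to X$ of $\om_X$. The canonical inclusion $\mom X \inj \om_X$, pulled back and composed with the tautological quotient $\m^*\om_X \surj \O_{\~X}(1)$, produces an intrinsic morphism $\m^*\mom X \to \O_{\~X}(1)$ whose image has the form $\frj \cdot \O_{\~X}(1)$ for some ideal $\frj \subset \O_{\~X}$ that does not depend on $V$. Computing this image by threading the composition through $\m^*\om_V|_X$---using that the image of $\m^*\mom X \to \m^*\om_V|_X$ is $\Jac_V|_X\.\O_{\~X} \otimes \m^*\om_V|_X$, and that $\O_{\~X}(1) \cong \lcid{X,V}\.\O_{\~X} \otimes \m^*\om_V|_X$ by the formula in the proof of Proposition~\ref{p:indep-M}---will identify $\frj = (\Jac_V|_X\.\O_{\~X})\.(\lcid{X,V}\.\O_{\~X})^{-1}$, establishing the $V$-independence of $J_V - D_V$. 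Since $\Jac_X = \sum_V \Jac_V|_X$ and (up to integral closure) $\lcid X = \sum_V \lcid{X,V}$, and since the order along any prime divisor of $Y$ of such a sum equals the minimum of the individual orders, summing the relation $J_V = D_V + (J_V - D_V)$ over $V$ gives $B - D = J_V - D_V$. Combining everything,
\[
\bigl(\grom X : \om_X\bigr) = f_*\O_Y(\Km YX - B + D) = f_*\O_Y(\Kj YX + D) = \MIj{\lcid X^{-1}},
\]
as required.
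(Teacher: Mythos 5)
Your reduction to the colon $(\MIm{\Jac_V|_X} : \lcid{X,V})$ via \eqref{eq:om-lci} coincides with the paper's, but after that point you take a genuinely different route. The paper proves $\MIm{\Jac_V|_X\.\lcid{X,V}^{-1}} = \MIm{\Jac_X\.\lcid X^{-1}}$ by choosing $V$ \emph{general}: Lemma~\ref{l:v(a_t)} first arranges that the divisorial parts of $\Jac_V|_X\.\O_Y$ and $\lcid{X,V}\.\O_Y$ coincide with those of $\Jac_X\.\O_Y$ and $\lcid X\.\O_Y$, and then a Bertini argument (choosing $V = X\cup X'$ with $X'$ transverse to $X$ in codimension one) forces the non-divisorial residuals $\fra_V,\frb_V$ to agree in codimension one, so that $\MIu{\fra_V\.\frb_V^{-1}}=\O_Y$ and Proposition~\ref{p:bir-transf} finishes. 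You instead exploit the intrinsic ideal $\frn\subset\O_{\~X}$ on the Nash transformation of $\om_X$---the one appearing in the proof of Proposition~\ref{p:indep-M}, which satisfies $\frn\.(\lcid{X,V}\.\O_{\~X}) = \Jac_V|_X\.\O_{\~X}$ for every $V$---to get $\ord_E(\Jac_V|_X)-\ord_E(\lcid{X,V})=\ord_E(\frn)$ for all divisorial valuations $E$, and then take $\min_V$ of both sides to identify $\ord_E(\frn)$ with $\ord_E(\Jac_X)-\ord_E(\lcid X)$. This bypasses both the Bertini argument and the reduction to general $V$, and is arguably cleaner; the trade-off is that you rely on the mechanics of the blow-up $\~X$, whereas the paper's proof of Theorem~\ref{t:can} stays on $Y$ throughout.

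There is one point you need to fix. You take $f\colon Y\to X$ a log resolution of $(X,\lcid X)$ and then write $\lcid{X,V}\.\O_Y=\O_Y(-D_V)$. This asserts that $\lcid{X,V}\.\O_Y$ is locally principal, which does \emph{not} follow from $\lcid X\.\O_Y$ being principal (the smaller ideal $\lcid{X,V}\subset\lcid X$ need not pull back to a principal ideal), nor from $f$ factoring through the Nash blow-up, since that is the blow-up of $\Jac_V|_X$, not of $\lcid{X,V}$. The repair is routine but must be made: either take $f$ to be a log resolution of the larger pair $(X,\lcid X\.\Jac_V|_X\.\lcid{X,V})$ for a fixed $V$, or---as your own colon identity actually permits, since its ``$\frb\.\O_Y$ locally principal'' hypothesis is not needed when $Y$ is smooth and $Z(\frb\.\O_Y)$ is taken to be the divisorial part of the vanishing---interpret $D_V$ as the divisorial part of $\lcid{X,V}\.\O_Y$. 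In either case the valuative identity $J_V - D_V = B - D$ goes through and your conclusion is correct.
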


\begin{rmk}
Using Lemma~\ref{l:colon} below and Remark~\ref{r:N-J-mult-id}, the formula
in the theorem can be rewritten in the following equivalent ways:
\[
\big( \grom X : \om_X \big) 
= \big(\MIj{\O_X} : \lcid X \big) 
= \big(\MIm{\Jac_X} : \lcid X \big).
\]
\end{rmk}

\begin{proof}[Proof of Theorem~\ref{t:can}]
Since the question is local, we can assume that $X$ is affine. We fix an
embedding of $X$ in a smooth affine variety $M$, and denote $e =
\codim(X,M)$. 

Let $T$ be an irreducible algebraic family parametrizing reduced, complete
intersections $V \subset M$ of codimension $e$ containing $X$. The family
is constructed as an open set of the Grassmannian of $e$-tuples of linear
combinations among a fixed set of generators of the ideal $\I_X$ of $X$ in
$M$. We have $\om_X = \lcid{X,V}\otimes\om_V|_X$ by definition and $\grom X
= \MIm{\Jac_V|_X} \otimes \om_V|_X$ by \eqref{eq:om-lci}, and hence
\[
\big( \grom X : \om_X \big) = 
\big( \MIm{\Jac_V|_X}\otimes \om_V|_ X : \lcid{X,V}\otimes\om_V|_X \big) = 
\big( \MIm{\Jac_V|_X} : \lcid{X,V}\big) 
\]
since $\om_V|_X$ is invertible. Using Lemma~\ref{l:colon} below, we get
\[
\big( \grom X : \om_X \big) = 
\MIm{\Jac_V|_X\.\lcid{X,V}^{-1}}. 
\]
Therefore, in order to prove the theorem we are reduced to prove the
following identity:
\begin{equation}
\label{eq:identity}
\MIm{\Jac_V|_X\.\lcid{X,V}^{-1}} = \MIm{\Jac_X\.\lcid X^{-1}}. 
\end{equation}
Since the left hand side does not depend on $V$, we can assume that $V$ is
general in $T$. 

Let $f \colon Y \to X$ be a log resolution of $(X,\Jac_X\.\lcid X^{-1})$,
and write
\[
\Jac_X\.\O_Y = \O_Y(-A), \qquad \lcid X\.\O_Y = \O_Y(-B).
\]
We have $\Jac_X = \sum_{V \in T} \Jac_V|_X$, and $\lcid X$ has the same
integral closure of $\lcid{X/M} = \sum_{V \in T} \lcid{X,V}$ (see
Proposition~\ref{p:indep-M}). Therefore, if $V$ is sufficiently general
then by Lemma~\ref{l:v(a_t)} below we have
\[
\Jac_V|_X\.\O_Y = \fra_V\.\O_Y(-A), 
\qquad \lcid {X,V}\.\O_Y = \frb_V\.\O_Y(-B),
\]
where $\fra_V, \frb_V \subset\O_Y$ do not vanish along any exceptional
divisor. Moreover, we have
\[
\sum_{V \in T} \fra_V = \O_Y.
\]
This follows again from the fact that, since $\sum_{V \in T} \Jac_V|_X =
\Jac_X$, for every divisorial valuation $\n$ we can find a $V$ in $T$ such
that $\n(\Jac_V|_X)= \n(\Jac_X)$. 

By taking $V = X \cup X'$ general, we can assume by Bertini that $X'$
intersects $X$ transversally in codimension one (recall that $X$ is normal,
hence smooth in codimension one). In particular, it follows that $\fra_V$
and $\frb_V$ agree in codimension one. Indeed since these sheaves do not
vanish on exceptional divisors, this becomes a computation on $X$, and it
is easy to see that if $X'$ intersects $X$ transversally in codimension
one, then at the generic point of each irreducible component of $X \cap X'$
both ideals are reduced.

Note on the other hand that $\Jac_V|_X\.\O_Y$ is locally principal, since
$f$ factors through the blow-up of $\Jac_X$ and hence through the Nash
blow-up of $X$, which is isomorphic to the blow-up of $\Jac_V|_X$.
Therefore $\fra_V$ is locally principal, and hence there is an inclusion
$\frb_V \subset \fra_V$ that is an identity in codimension one. This
implies that
\[
\MIu{\fra_V\.\frb_V^{-1}} = \O_Y.
\]
Using Proposition~\ref{p:bir-transf}, we get
\begin{align*}
\MIm{\Jac_V|_X\.\lcid{V,X}^{-1}} 
&= f_*\big(\MIu{\fra_V\.\frb_V^{-1}}\.\O_Y(\Km YX - A + B)\big) \\
&= f_*\O_Y(\Km YX - A + B) \\
&= \MIm{\Jac_X\.\lcid X^{-1}}. 
\end{align*}
This proves the identity \eqref{eq:identity}.
\end{proof}

\begin{rmk}
If in the proof one takes $f$ so that it also factors through the Nash
transformation of $X$ relative to $\om_X$, then $\lcid{X,V}\.\O_Y$ is
locally principal too and hence $\fra_V= \frb_V$. This step is however not
necessary in the proof. 
\end{rmk}

\begin{lem}
\label{l:colon}
On a normal variety $X$, for every two ideals $\fra,\frb \subset \O_X$ we
have
\[
\big( \MIm{\fra} : \frb \big) = 
\MIm{\fra\.\frb^{-1}}. 
\]
\end{lem}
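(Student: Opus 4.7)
The plan is to reduce everything to a single model. Choose a log resolution $f\colon Y\to X$ of the pair $(X,\fra\.\frb)$ that factors through the blow-up of $\Jac_X$, so that $\fra\.\O_Y=\O_Y(-A)$, $\frb\.\O_Y=\O_Y(-B)$ and $\Jac_f\.\O_Y=\O_Y(-\Km YX)$ are all locally principal. Since Mather multiplier ideals do not depend on the log resolution, we have the explicit descriptions
\[
\MIm{\fra}=f_*\O_Y(\Km YX-A),\qquad \MIm{\fra\.\frb^{-1}}=f_*\O_Y(\Km YX-A+B).
\]

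The inclusion $\MIm{\fra\.\frb^{-1}}\subset(\MIm{\fra}:\frb)$ is the easy direction. Indeed, $\frb\.\O_Y=\O_Y(-B)$ gives a natural inclusion of sheaves
\[
\frb\.\O_Y(\Km YX-A+B)\subset\O_Y(\Km YX-A),
\]
and applying $f_*$ together with the projection formula yields $\frb\.\MIm{\fra\.\frb^{-1}}\subset\MIm{\fra}$, hence the desired containment of colon ideals.

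For the reverse inclusion, the approach is to check the valuative condition along every prime divisor on $Y$. Fix $s\in(\MIm{\fra}:\frb)$ (as a section over an open $U\subset X$, or at a stalk). For each prime divisor $E$ on $Y$ meeting $f^{-1}(U)$, the goal is to show
\[
\ord_E(s)+\km EX-\ord_E(\fra)+\ord_E(\frb)\ge 0.
\]
The key observation is that one can locally find $h\in\frb$ realising $\ord_E(h)=\ord_E(\frb)$. This is where the local principality of $\frb\.\O_Y$ enters: near a general point of $E$, a local generator of $\frb\.\O_Y$ is an $\O_Y$-linear combination of pulled-back elements of $\frb$, and the minimum of $\ord_E$ on such a combination is attained by one of the summands, which therefore saturates $\ord_E(\frb)$. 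Once such an $h$ is chosen, the hypothesis $sh\in\MIm{\fra}$ translates via the formula for $\MIm{\fra}$ into
\[
\ord_E(s)+\ord_E(h)+\km EX-\ord_E(\fra)\ge 0,
\]
and substituting $\ord_E(h)=\ord_E(\frb)$ gives exactly the inequality we want. Running this over all $E$ yields $s\in\MIm{\fra\.\frb^{-1}}$.

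The only subtlety I foresee is the local nature of producing $h\in\frb$ of minimal order along $E$: the element $h$ depends on $E$, so the argument must be set up stalk-wise (or over sufficiently small Zariski opens in $X$ through which $f(E)$ passes). Since the statement of the lemma is itself local on $X$ and both sides are quasi-coherent $\O_X$-submodules of $\C(X)$, this is harmless, and the two inclusions combine to give the claimed equality.
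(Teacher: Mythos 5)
Your proof is correct. The paper's argument and yours both start from the same log resolution and the same explicit formula $\MIm{\fra}=f_*\O_Y(\Km YX-A)$, $\MIm{\fra\.\frb^{-1}}=f_*\O_Y(\Km YX-A+B)$; the difference lies in how the colon is handled. The paper gives a single chain of equivalences: $x\cdot\frb\subset\MIm{\fra}$ iff $f^*x\cdot\O_Y(-B)\subset\O_Y(\Km YX-A)$ iff $f^*x\in\O_Y(\Km YX-A+B)$, the middle step being where the invertibility of $\frb\cdot\O_Y$ is used to `divide through'. You instead split into two inclusions and prove the hard one by verifying the valuative condition divisor by divisor, choosing, for each $E$, an element $h\in\frb$ with $\ord_E(h)=\ord_E(\frb)$ (which exists because $\frb\cdot\O_{Y,E}$ is generated by the $f^*h$ in the DVR $\O_{Y,E}$). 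This is essentially an unpacking of the paper's third equivalence; the paper's version is more compact, while yours makes the role of the line-bundle hypothesis visible at the level of valuations. Your closing caveat about choosing $h$ locally is well handled: the lemma is local on $X$ and the choice can be made near the generic point of $c_X(E)$ for each $E$ separately, which is all the valuative check requires.
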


\begin{proof}
Let $f \colon Y \to X$ be a log resolution of $(X,\fra \.\frb)$, and write
$\fra\.\O_Y = \O_Y(-A)$ and $\frb\.\O_Y = \O_Y(-B)$. Then 
\begin{align*}
x \in \big( \MIm{\fra} : \frb \big) 
\quad &\Longleftrightarrow \quad x \.\frb \subset \MIm{\fra} \\
\quad &\Longleftrightarrow \quad f^*x \.\O_Y(-B) \subset \O_Y(\Km YX - A) \\
\quad &\Longleftrightarrow \quad f^*x \in \O_Y(\Km YX - A + B)  \\
\quad &\Longleftrightarrow \quad x \in \MIm{\fra\.\frb^{-1}}. \qedhere
\end{align*}
\end{proof}

\begin{lem}
\label{l:v(a_t)}
On a variety $M$, let $\fra_t \subset \O_M$, $t \in T$, be an algebraic
family of ideal sheaves, and let $\fra = \sum_{t\in T} \fra_t$. Then for
every divisorial valuation $\n$ of $\O_M$ there is a non-empty open set
$T_\n \subset T$ such that $\n(\fra_\n) = \n(\fra)$ for every $\n \in
T_\n$.
\end{lem}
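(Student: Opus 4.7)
The plan is to show that the function $t \mapsto \n(\fra_t)$ is upper semi-continuous on $T$, and then use Noetherianity to identify its minimum with $\n(\fra)$. The question is local on $M$, so we may assume $M = \Spec A$ and $T = \Spec B$ are affine, and the family is encoded by a single ideal $\mathcal{A} \subset A \otimes B$ with $\fra_t = \mathcal{A}\otimes_B \kappa(t)$; fix finitely many generators $g_1,\dots,g_s$ of $\mathcal{A}$.

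The core claim is that for each fixed $k \ge 0$ and each $g \in A \otimes B$, the set $\{t \in T : \n(\bar g(t)) \ge k\}$ is Zariski closed. To see this, pick a smooth model $h\colon Y \to M$ on which $\n = \ord_E$ for a prime divisor $E \subset Y$, and let $R = \O_{Y,E}$ be the associated DVR with maximal ideal $\frm$. The composition
\[
A \otimes B \longrightarrow R \otimes B \longrightarrow (R/\frm^k)\otimes B
\]
sends $g$ to an element which, by expanding in a finite sum $g = \sum a_i \otimes b_i$, can be written as $\sum \bar a_i \otimes b_i$ with $\bar a_i \in R/\frm^k$. The condition $\n(\bar g(t)) \ge k$ is exactly the vanishing of the image of $g$ in $(R/\frm^k)\otimes \kappa(t)$, which is cut out on $T$ by the simultaneous vanishing of finitely many regular functions on $T$ (the coefficients in any $k$-basis expansion). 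This proves the claim, and therefore $t \mapsto \n(\fra_t) = \min_j \n(\bar g_j(t))$ is integer-valued and upper semi-continuous.

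Finally, since the question is local around the center of $\n$, we may use Noetherianity: $\fra = \sum_{t\in T}\fra_t$ is finitely generated, hence already $\fra = \fra_{t_1} + \cdots + \fra_{t_k}$ for some $t_1,\dots,t_k \in T$. On the one hand $\n(\fra) = \min_i \n(\fra_{t_i})$; on the other, $\fra_t \subset \fra$ gives $\n(\fra_t) \ge \n(\fra)$ for every $t$. Combining, $\n(\fra) = \min_{t \in T} \n(\fra_t)$, and by upper semi-continuity the set
\[
T_\n := \{\, t \in T \mid \n(\fra_t) = \n(\fra)\,\} = T \setminus \{\, t \in T \mid \n(\fra_t) \ge \n(\fra)+1\,\}
\]
is open and nonempty (it contains the $t_i$'s).

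The main obstacle is the upper semi-continuity claim; once one writes out the DVR presentation above it becomes transparent, but it is the only step where the valuative information genuinely interacts with the algebraic family structure.
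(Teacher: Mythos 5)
Your proof is correct and proceeds along essentially the same lines as the paper's, which simply asserts $\n(\fra) = \min_{t \in T}\n(\fra_t)$ and invokes the semi-continuity of $t \mapsto \n(\fra_t)$ without further comment; you supply the details for both points, via Noetherianity and a DVR-truncation argument respectively. The only thing to tighten is the phrase ``the coefficients in any $k$-basis expansion'': when $\dim M \ge 2$ the residue field of $R=\O_{Y,E}$ is $\C(E)$, so $R/\frm^k$ is \emph{infinite}-dimensional over $\C$; one should instead expand the finitely many elements $\bar a_1,\dots,\bar a_r$ in a $\C$-basis of the finite-dimensional subspace of $R/\frm^k$ that they span, after which the closedness argument goes through exactly as you describe.
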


\begin{proof}
For every $\n$ we have $\n(\fra) = \min_{t \in T} \n(\fra_t)$, and hence
the assertion follows from the semi-continuity of the function $\fra_t
\mapsto \n(\fra_t)$. 
\end{proof}

\subsection{Grauert--Riemenschneider canonical sheaf of a pair}

Let $\frA = \prod_k \fra_k^{c_k}$ be a proper $\R$-ideal on a normal
variety $X$. Associated to the pair $(X,\frA)$, we consider the sheaf
\[
\grom {(X,\frA)} := f_*\om_Y(- \rd{Z(\frA\.\O_Y}).
\]
where $f \colon Y \to X$ is any log resolution of $(X,\frA)$. We call
$\grom{(X,\frA)}$ the \emph{Grauert--Riemenschneider canonical sheaf of
the pair}.

It is well-known that the definition of $\grom{(X,\frA)}$ is independent of
the choice of log resolution. If $(X,\frA)$ is effective in codimension
one, then $\grom{(X,\frA)}$ is a subsheaf of $\grom X$ and thus of $\om_X$.
Motivated by an analogous definition in positive characteristics due to
Smith \cite{Smi95}, this sheaf has been considered before with the name of
\emph{multiplier submodule} by several authors, see in particular
\cite{HS03,Bli04,ST08}. We however prefer to view $\grom{(X,\frA)}$ as a
`perturbation' of the Grauert--Riemenschneider canonical sheaf of $X$,
hence the terminology and notation adopted here. 

Theorem~\ref{t:can} generalizes as follows.

\begin{thm}
\label{t:can(X,Z)}
For every proper $\R$-ideal $\frA$ on a normal variety $X$, we have
\[
\big( \grom {(X,\frA)} : \om_X \big) = \MIj{\frA\.\lcid X^{-1}}.
\]
\end{thm}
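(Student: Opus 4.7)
The plan is to adapt the proof of Theorem~\ref{t:can} by tracking the extra contribution of the $\R$-ideal $\frA$. Since the question is local, I would embed $X$ in a smooth variety $M$ and choose, by Bertini, a reduced locally complete intersection $V = X \cup X' \supset X$ of the same dimension with $X'$ transverse to $X$ in codimension one.

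The first step is to establish the pair analog of \eqref{eq:om-lci}:
\[
\grom{(X,\frA)} \cong \om_V|_X \otimes \MIm{\frA \. \Jac_V|_X}.
\]
This follows by the same computation as for $\grom X$: on a log resolution $f \colon Y \to X$ factoring through the Nash blow-up of $X$, Proposition~\ref{p:taut-bundles} gives $\om_Y \cong \O_Y(\Km YX - A_V^{\rm tot}) \otimes f^*\om_V|_X$, where $\Jac_V|_X \. \O_Y = \O_Y(-A_V^{\rm tot})$. Pushing forward $\om_Y \otimes \O_Y(-\rd{Z(\frA \. \O_Y)})$, applying the projection formula, and using that $A_V^{\rm tot}$ has integer coefficients (so $A_V^{\rm tot} + \rd{Z(\frA \. \O_Y)} = \rd{Z(\Jac_V|_X \. \frA \. \O_Y)}$) yields the isomorphism. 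Combining with the identification $\om_X \cong \lcid{X,V} \otimes \om_V|_X$ and Lemma~\ref{l:colon}, the colon ideal becomes
\[
\big(\grom{(X,\frA)} : \om_X\big) = \big(\MIm{\frA \. \Jac_V|_X} : \lcid{X,V}\big) = \MIm{\frA \. \Jac_V|_X \. \lcid{X,V}^{-1}},
\]
so by Remark~\ref{r:N-J-mult-id} the theorem reduces to showing $\MIm{\frA \. \Jac_V|_X \. \lcid{X,V}^{-1}} = \MIm{\frA \. \Jac_X \. \lcid X^{-1}}$.

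To prove this identity, I would take $f \colon Y \to X$ to be a log resolution of $(X, \frA \. \Jac_X \. \lcid X)$ that additionally factors through both the Nash blow-up of $X$ and the Nash transformation of $X$ relative to $\om_X$. Then $\Jac_X \. \O_Y = \O_Y(-A)$, $\lcid X \. \O_Y = \O_Y(-B)$, $\Jac_V|_X \. \O_Y = \O_Y(-A_V^{\rm tot})$, and $\lcid{X,V} \. \O_Y = \O_Y(-B_V^{\rm tot})$ are all locally principal. For $V$ sufficiently general, Lemma~\ref{l:v(a_t)} shows that the auxiliary divisors $A'_V := A_V^{\rm tot} - A$ and $B'_V := B_V^{\rm tot} - B$ are effective with no $f$-exceptional components, and the transversality of $X'$ to $X$ forces both to be reduced and supported on the strict transform of $X \cap X'$. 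Hence $A'_V = B'_V$, giving the integer divisor identity $A_V^{\rm tot} - B_V^{\rm tot} = A - B$ on $Y$. Adding the $\R$-divisor $Z(\frA \. \O_Y)$ to both sides and taking round-downs yields the desired equality of Mather multiplier ideals.

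The main obstacle is that the birational transformation rule, Proposition~\ref{p:bir-transf}, is formulated only for integer exponents and therefore cannot be invoked directly to handle the $\R$-ideal $\frA$. The workaround is to carry out the comparison at the level of $\R$-divisors on a sufficiently fine log resolution: since $\frA$ enters symmetrically on both sides of the target identity as the same divisor $Z(\frA \. \O_Y)$, it contributes identically to the round-downs on both sides, and the proof reduces to the integer divisor equality above, which is guaranteed by the genericity of $V$ and the transversality of $X'$.
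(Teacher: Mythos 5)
Your proposal is correct and follows the paper's high-level strategy: establish the pair analog $\grom{(X,\frA)} \cong \om_V|_X \otimes \MIm{\frA\.\Jac_V|_X}$ of \eqref{eq:om-lci}, combine it with $\om_X \cong \lcid{X,V}\otimes\om_V|_X$ and Lemma~\ref{l:colon} to reduce to the identity $\MIm{\frA\.\Jac_V|_X\.\lcid{X,V}^{-1}} = \MIm{\frA\.\Jac_X\.\lcid X^{-1}}$, and prove this by a generic-$V$/Bertini argument on a log resolution. Where you diverge is in closing the last step. The paper says this ``follows by the same arguments leading to \eqref{eq:identity},'' which for integer exponents run through Proposition~\ref{p:bir-transf}, and you rightly observe that this proposition is only stated for $c_i \in \Z$. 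Rather than extending it, you impose that $f$ also factor through the Nash transformation of $X$ relative to $\om_X$ --- precisely the option flagged as unnecessary in the remark following Theorem~\ref{t:can} --- so that $\lcid{X,V}\.\O_Y$ becomes locally principal and $\fra_V = \frb_V$, reducing matters to the integer divisor equality $A_V^{\rm tot} - B_V^{\rm tot} = A - B$ on $Y$; the $\R$-divisor $Z(\frA\.\O_Y)$ then contributes the same term to the round-downs on both sides and cancels. This is a clean way to handle the $\R$-exponent issue, though the paper's original route also survives: on a finer resolution $g \colon Y' \to Y$, the divisor $B'_V - g^*A'_V$ produced by $\frb_V \subsetneq \fra_V$ is effective and $g$-exceptional, hence absorbed by $g_*$, so $\MIu{\fra_V\.\frb_V^{-1}} = \O_Y$ still suffices even with a genuinely real divisor $Z(\frA\.\O_Y)$ in play. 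Both arguments are valid; yours makes the verification more transparent at the mild cost of requiring $f$ to factor through one more blow-up, and you should note in passing that Lemma~\ref{l:colon} is stated for ideals but extends verbatim to proper $\R$-ideals (which your step 2 implicitly uses).
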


\begin{proof}
The proof proceeds along the same lines of that of Theorem~\ref{t:can}.
Using 
\[
\grom {(X,\frA)} = \MIm {\frA\.\Jac_V|_X} \otimes \om_V|_X,
\]
which is a straightforward generalization of \eqref{eq:om-lci}, we get this
time
\[
\big( \grom {(X,\frA)} : \om_X \big) = 
\MIm{\frA\.\Jac_V|_X\.\lcid{V,X}^{-1}}. 
\]
It is therefore enough to prove that
\[
\MIm{\frA\.\Jac_V|_X\.\lcid{V,X}^{-1}} = \MIm{\frA\.\Jac_X\.\lcid X^{-1}}. 
\]
This follows by the same arguments leading to \eqref{eq:identity} in the
proof of Theorem~\ref{t:can}.
\end{proof}

\begin{rmk}
When $\om_X$ is invertible, the theorem implies that $\MIj{\frA\.\lcid
X^{-1}}= \MIu{X,\frA}$ for every proper $\R$-ideal $\frA$ on $X$, since in
this case $\grom {(X,\frA)} \cong \om_X \otimes \MIu{X,\frA}$. In
particular, $X$ is canonical (resp., log canonical) if and only if the pair
$(X,\lcid X^{-1})$ is J-canonical (resp., log J-canonical). Both properties
can also be deduced directly from Proposition~\ref{p:Q-Gor} as in this case
$\lcid X = \lcid{1,X}$.
\end{rmk}

\begin{rmk}
Grauert and Riemenschneider proved that the Kodaira vanishing theorem holds
on any normal projective variety $X$ if one considers the sheaf $\grom X$
in place of the canonical sheaf $\om_X$ \cite{GR70}. More generally, a
standard application of the Kawamata--Viehweg Vanishing theorem implies the
following general vanishing property. Let $\frA = \prod_i \fra_i^{c_i}$ be
an effective proper $\R$-ideal on $X$, and suppose that, for every $i$,
$D_i$ is a Cartier divisor on $X$ such that $\O_X(D_i)\otimes\fra_i$ is
globally generated. Then for every Cartier divisor $L$ such that $L - \sum
_i c_iD_i$ is a nef and big $\R$-divisor, we have
\[
H^j\big(\grom{(X,\frA)} \otimes \O_X(L)\big) = 0 \fall j > 0.
\]
In view of Theorem~\ref{t:can(X,Z)}, such vanishing can be interpreted as a
Nadel-type vanishing theorem for the Jacobian multiplier ideal
$\MIj{\frA\.\lcid X^{-1}}$.
\end{rmk}

\section{Rational and Du Bois singularities}
\label{s:rat-DB}

As an application of the main results of the paper, we give in this section
necessary and sufficient conditions for rational and Du Bois singularities
on normal varieties and provide a characterization for these classes of
singularities in the Cohen--Macaulay case.

We recall that a variety $X$ has \emph{rational singularities} if given a
resolution of singularities $f \colon Y \to X$ such that $f_*\O_Y = \O_X$
and $R^if_*\O_Y = 0$ for $i > 0$, or in other words, such that the natural
map $\O_X \to Rf_*\O_Y$ is a quasi-isomorphism. The original definition of
Du Bois singularities is more complicated, and we will not recall it here.
Several alternative definitions were given by many authors throughout the
years, and we will adopt here the one given in \cite{Sch07} for which a
reduced scheme $X$ embedded in a smooth variety $M$ has \emph{Du Bois
singularities} if and only if, given a log resolution $g \colon N \to M$ of
$(M,\I_X)$ (note: not an embedded log resolution of $X$) that is an
isomorphism away from $X$, and denoting by $F = (g^{-1}(X))_\red$ the
reduced pre-image of $X$, the natural map $\O_X \to Rg_*\O_F$ is a
quasi-isomorphism. For more generalities on these classes of singularities,
we refer to \cite{KM98,Sch07,KSS10,KK10} and the references therein.

\subsection{Necessary condition and
characterization on Cohen--Macaulay varieties}

It is a well-known result of Kempf \cite{KKMSD73} that a normal variety $X$
has rational singularities if and only if it is Cohen--Macaulay and
$f_*\om_Y = \om_X$. An analogous property proven more recently by Kov\'acs,
Schwede and Smith says that a normal Cohen--Macaulay variety $X$ has Du
Bois singularities if and only if $f_*\om_Y(E) = \om_X$ where $f \colon Y
\to X$ is a log resolution and $E$ is the reduced exceptional divisor, see
Theorem~1.1 of \cite{KSS10}. Furthermore, it follows by Theorem 3.8 of
\cite{KSS10} that the identity $f_*\om_Y(E) = \om_X$ holds for all normal
varieties with Du Bois singularities, regardless of whether or not they are
Cohen--Macaulay.

These facts motivate the following result. 

\begin{thm}
\label{t:char}
Let $X$ be a normal variety, and let $\lcid X \subset \O_X$ be the
lci-defect ideal of $X$. Let $f \colon Y \to X$ be a log resolution of $X$,
and denote by $E$ the reduced exceptional divisor. Then the following
properties hold: 
\begin{enumerate}
\item
The pair $(X,\lcid X^{-1})$ is J-canonical if and only if $f_*\om_Y = \om_X$. 
\item
The pair $(X,\lcid X^{-1})$ is log J-canonical if and only if $f_*\om_Y(E) = \om_X$. 
\end{enumerate}
\end{thm}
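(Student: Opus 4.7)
The plan is to deduce both parts from Theorem~\ref{t:can} and its twisted version Theorem~\ref{t:can(X,Z)}, together with the fact that the $\R$-ideal $\lcid X^{-1}$ is effective in codimension one. Indeed, since $X$ is normal it is locally complete intersection in codimension one, so $\lcid X = \O_X$ generically along every prime divisor. In particular $\MIj{\lcid X^{-1}}$ is an honest ideal sheaf in $\O_X$ by Proposition~\ref{p:eff-ideal}, and the log J-discrepancy of $(X,\lcid X^{-1})$ along any non-exceptional prime divisor is automatically equal to $1$, so throughout the argument I only need to track exceptional divisors.

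For part (a), Theorem~\ref{t:can} gives $(f_*\om_Y : \om_X) = \MIj{\lcid X^{-1}}$. Combined with the inclusion $f_*\om_Y = \grom X \subseteq \om_X$, this shows that $f_*\om_Y = \om_X$ if and only if $\MIj{\lcid X^{-1}} = \O_X$. Writing $\lcid X\.\O_Y = \O_Y(-B)$, this last ideal equals $f_*\O_Y(\Kj YX + B)$, and it contains the constant $1$ (the only constraint for an ideal sheaf inside $\O_X$ to equal $\O_X$) precisely when $\Kj YX + B$ is effective on every prime divisor of $Y$, which by the reduction above is the J-canonical condition for $(X,\lcid X^{-1})$.

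For part (b), the strategy is to exhibit a proper $\R$-ideal $\frA$ on $X$ with $\grom{(X,\frA)} = f_*\om_Y(E)$ and then apply Theorem~\ref{t:can(X,Z)}. I will take $\frA = \Jac_X^{-\l}$ for $\l>0$ sufficiently small. Writing $\Jac_X\.\O_Y = \O_Y(-A)$, one has $\Supp(A) = E$ since $Z(\Jac_X) = \Sing X$ and every exceptional prime of a log resolution has center inside $\Sing X$, forcing the pullback of $\Jac_X$ to vanish along each exceptional divisor. For $\l$ small, $\rd{-\l A} = -E$, whence $\grom{(X,\Jac_X^{-\l})} = f_*\om_Y(E)$, and Theorem~\ref{t:can(X,Z)} produces
\[
\big(f_*\om_Y(E) : \om_X\big) \;=\; \MIj{\Jac_X^{-\l}\.\lcid X^{-1}} \;=\; f_*\O_Y\bigl(\Kj YX + \ru{\l A + B}\bigr),
\]
and for $\l$ small enough the ceiling bumps the integer divisor $B$ up by one exactly along $\Supp(\l A) = E$, so $\ru{\l A + B} = B + E$.

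To close the argument I need the inclusion $f_*\om_Y(E) \subseteq \om_X$: this follows because $\om_X$ is reflexive (hence $S_2$), $X$ is smooth in codimension one, and $f$ is an isomorphism over $X_{\reg}$ (so that $f_*\om_Y(E)$ already agrees with $\om_X$ on the big open $X_{\reg}$, and $S_2$-extension then gives the global inclusion). Given this, $f_*\om_Y(E) = \om_X$ is equivalent to the colon being $\O_X$, equivalently to $\Kj YX + B + E$ being effective, equivalently to $\kj{E_i}X + \ord_{E_i}(B) + 1 \ge 0$ for every exceptional prime $E_i$, which is precisely the log J-canonical condition $\aj{E_i}{X,\lcid X^{-1}} \ge 0$. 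The main technical care lies in the identification $\Supp(A) = E$ and the corresponding ceiling computation, which rely on $f$ being a log resolution in the strong sense fixed in Section~\ref{s:sing} (so that in particular $f$ factors through the blow-up of $\Jac_X$); once this is pinned down the rest is bookkeeping with divisor inequalities.
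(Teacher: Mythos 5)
Your proof is correct and takes essentially the same route as the paper's: both parts (a) and (b) are deduced from the colon-ideal formulas of Theorem~\ref{t:can} and Theorem~\ref{t:can(X,Z)}, with part (b) using the same identification $f_*\om_Y(E) = \grom{(X,\Jac_X^{-\l})}$ for $0<\l\ll 1$. You fill in the divisor-level bookkeeping (the supports of $A$ and $B$, the ceiling $\ru{\l A + B} = B + E$) a bit more explicitly than the paper does, but the ingredients and structure are identical.
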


\begin{proof}
Recall that $f_*\om_Y = \grom X$. By Theorem~\ref{t:can}, $\grom X = \om_X$
if and only if $\MIj{\lcid X^{-1}} = \O_X$, which is equivalent to
$(X,\lcid X^{-1})$ being J-canonical. This proves~(a). To prove~(b), first
note that if $f$ is an isomorphism over the regular locus of $X$ and $\l$
is a sufficiently small positive number, then $\rd{Z(\Jac_X^{-\l}\.\O_Y)} =
-E$, and thus
\[
f_*\om_Y(E) = \grom {(X,\Jac_X^{-\l})} \for 0 < \l \ll 1.
\]
Therefore $(X,\lcid X^{-1})$ is log J-canonical if and only if $\grom
{(X,\Jac_X^{-\l})} = \om_X$ for every sufficiently small $\l > 0$. On the
other hand, by definition of multiplier ideal and the fact that the radical
of $\lcid X$ contains $\Jac_X$, we have that $(X,\lcid X^{-1})$ is log
J-canonical if and only if $\MIj{\Jac_X^{-\l}\.\lcid X^{-1}} = \O_X$ for
every $\l > 0$. Therefore~(b) follows from Theorem~\ref{t:can(X,Z)}.
\end{proof}

\begin{cor}
\label{c:rat-DB}
Let $X$ be a normal variety, and let $\lcid X \subset
\O_X$ be the lci-defect ideal of $X$. 
\begin{enumerate}
\item
If $X$ has rational singularities then $(X,\lcid X^{-1})$ is
J-canonical.
\item
If $X$ has Du Bois singularities then $(X,\lcid X^{-1})$ is log
J-canonical.
\end{enumerate}
Moreover, the converse holds in both cases whenever $X$ is Cohen--Macaulay.
\end{cor}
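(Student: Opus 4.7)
The proof is essentially a bookkeeping exercise combining Theorem~\ref{t:char} with the sheaf-theoretic characterizations of rational and Du Bois singularities recalled immediately before Theorem~\ref{t:char}. The plan is to observe that in each case, the condition that the pair $(X,\lcid X^{-1})$ is (log) J-canonical has been translated by Theorem~\ref{t:char} into the identity $f_*\om_Y = \om_X$ (resp.\ $f_*\om_Y(E) = \om_X$), and this identity is precisely what the known characterizations of rational and Du Bois singularities relate to.

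For the forward implication in~(a), if $X$ has rational singularities then by Kempf's theorem \cite{KKMSD73}, $X$ is Cohen--Macaulay and $f_*\om_Y = \om_X$ for any resolution $f\colon Y \to X$. Applying Theorem~\ref{t:char}(a) immediately yields that $(X,\lcid X^{-1})$ is J-canonical. For~(b), if $X$ has Du Bois singularities, then by Theorem~3.8 of \cite{KSS10} (valid for all normal varieties with Du Bois singularities, with no Cohen--Macaulay hypothesis) we have $f_*\om_Y(E) = \om_X$, and Theorem~\ref{t:char}(b) yields that $(X,\lcid X^{-1})$ is log J-canonical.

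For the converses, assume $X$ is Cohen--Macaulay. If $(X,\lcid X^{-1})$ is J-canonical, then Theorem~\ref{t:char}(a) gives $f_*\om_Y = \om_X$; combined with the Cohen--Macaulay hypothesis, Kempf's criterion says $X$ has rational singularities. If instead $(X,\lcid X^{-1})$ is log J-canonical, then Theorem~\ref{t:char}(b) gives $f_*\om_Y(E) = \om_X$, and Theorem~1.1 of \cite{KSS10}, which applies precisely in the normal Cohen--Macaulay setting, concludes that $X$ has Du Bois singularities.

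In summary, once Theorem~\ref{t:char} is available, the corollary is a direct consequence of the equivalences from \cite{KKMSD73} and \cite{KSS10}. There is no real obstacle here: the only point worth emphasizing is that in the forward direction of~(b) we use the stronger form from \cite[Thm.~3.8]{KSS10}, which does not require $X$ to be Cohen--Macaulay, so the necessary conditions in~(a) and~(b) hold in complete generality among normal varieties; the Cohen--Macaulay hypothesis enters only in the converses through Kempf's and \cite[Thm.~1.1]{KSS10}, and this is indeed essential (see Example~\ref{eg:example} as referenced in the introduction).
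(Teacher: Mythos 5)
Your proof is correct and follows exactly the argument the paper intends: Corollary~\ref{c:rat-DB} is a direct combination of Theorem~\ref{t:char} with Kempf's criterion, with Theorem~3.8 of \cite{KSS10} for the forward direction in~(b), and with Theorem~1.1 of \cite{KSS10} for the Cohen--Macaulay converse in~(b). You also correctly identified that the forward implications hold without the Cohen--Macaulay hypothesis (trivially in~(a) since rational singularities are Cohen--Macaulay, and in~(b) via the unconditional \cite[Thm.~3.8]{KSS10}), which is exactly the subtlety the paper highlights in the preamble to Theorem~\ref{t:char}.
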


In the special case when $X$ is $\Q$-Gorenstein, we obtain the result 
stated in the Introduction as Theorem~A.

\begin{cor}
\label{c:rat-DB-QGor}
With the same assumptions of Corollary~\ref{c:rat-DB}, suppose that $rK_X$
is Cartier for some positive integer $r$, and let $\lcid {r,X}$ be the
lci-defect ideal of level $r$ of $X$. 
\begin{enumerate}
\item
If $X$ has rational singularities then $(X,\lcid{r,X}^{1/r}\.\lcid
X^{-1})$ is canonical.
\item
If $X$ has Du Bois singularities then $(X,\lcid{r,X}^{1/r}\.\lcid
X^{-1})$ is log canonical.
\end{enumerate}
Moreover, the converse holds in both cases whenever $X$ is Cohen--Macaulay.
\end{cor}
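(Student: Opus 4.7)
The plan is to derive this as a direct corollary of Corollary~\ref{c:rat-DB} by translating between Jacobian and ordinary log discrepancies via Proposition~\ref{p:Q-Gor}. The entire content of the $\Q$-Gorenstein version beyond what is already proved for general normal $X$ is a bookkeeping identity relating the two kinds of discrepancies.

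The key step will be the observation that, for every prime divisor $E$ over $X$, one has
\[
\aj{E}{X,\lcid{X}^{-1}} \;=\; \au{E}{X,\lcid{r,X}^{1/r}\cdot\lcid{X}^{-1}}.
\]
To check this, I would start from $\aj{E}{X,\lcid{X}^{-1}} = \kj{E}{X} + 1 + \ord_E(\lcid{X})$ (using $\ord_E(\lcid{X}^{-1}) = -\ord_E(\lcid{X})$), substitute the formula $\kj{E}{X} = \ku{E}{X} - \tfrac{1}{r}\ord_E(\lcid{r,X})$ from Proposition~\ref{p:Q-Gor}, and then regroup to obtain $\ku{E}{X} + 1 - \bigl(\tfrac{1}{r}\ord_E(\lcid{r,X}) - \ord_E(\lcid{X})\bigr)$, which is exactly the usual log discrepancy of $E$ over the pair $(X,\lcid{r,X}^{1/r}\cdot\lcid{X}^{-1})$.

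Since the notions J-canonical and log J-canonical differ from canonical and log canonical only in replacing $\au{E}{\cdot}{\cdot}$ by $\aj{E}{\cdot}{\cdot}$ in the defining inequalities (with the same divisor classes inspected, since ``exceptional'' for $E$ has the same meaning in both frameworks), the displayed identity immediately gives the equivalences
\[
(X,\lcid{X}^{-1})\text{ is (log) J-canonical} \;\Longleftrightarrow\; (X,\lcid{r,X}^{1/r}\cdot\lcid{X}^{-1})\text{ is (log) canonical}.
\]
Feeding these equivalences into Corollary~\ref{c:rat-DB} then yields both implications (a) and (b), as well as their converses when $X$ is Cohen--Macaulay. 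There is no real obstacle in this argument; the only point to keep an eye on is that for prime divisors $E$ whose center has codimension one in $X$, both sides of the identity are automatically non-negative because $X$ is normal and the ideals $\lcid{X}$, $\lcid{r,X}$ are trivial in codimension one, so the comparison at such divisors is vacuous on both sides.
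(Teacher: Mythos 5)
Your proposal is correct and takes essentially the same route the paper does. The key identity
\[
\au E{X,\lcid{r,X}^{1/r}\.\lcid X^{-1}} = \aj E{X,\lcid X^{-1}}
\]
is exactly what the authors record immediately after the corollary (and your derivation of it from Proposition~\ref{p:Q-Gor} by regrouping terms is the intended one), so feeding it into Corollary~\ref{c:rat-DB} is precisely the paper's implicit argument.
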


If $\om_X$ is invertible then we can take $r=1$, and since in this case
$\lcid {1,X} = \lcid X$ the corollary recovers the well-known
characterization of rational singularities and Du Bois singularities on
Gorenstein varieties.

In general, assuming a priori that the variety is Cohen--Macaulay,
Corollary~\ref{c:rat-DB-QGor} gives new proofs to the facts that a variety
with log terminal (resp., log canonical) singularities has rational (resp.,
Du Bois) singularities, which we know from the results of
\cite{Elk81,KSS10,KK10}. To see this, first notice that
$(X,\lcid{r,X}^{1/r}\.\lcid X^{-1})$ is canonical if and only if it is log
terminal, since for every prime divisor $E$ over $X$
\[
\au E{X,\lcid{r,X}^{1/r}\.\lcid X^{-1}} = \aj E{X,\lcid X^{-1}} \in \Z
\]
by Proposition~\ref{p:Q-Gor}. Since $\lcid X^r \subset \ov{\lcid{r,X}}$ by
Proposition~\ref{p:lcid-r}, we have $\au E{X,\lcid{r,X}^{1/r}\.\lcid
X^{-1}} \ge \au EX$, and thus the aforementioned properties follow, under
the Cohen--Macaulay hypothesis, from the corollary. 

More interestingly, the corollary provides the necessary correction on
discrepancies for the converses of such results to hold.


\begin{rmk}
\label{r:strict}
One should think of the difference between $\lcid X$ and
$\lcid{r,X}^{1/r}$, from a valuation theoretic point of view, as the cause
of failure of the converses in the theorems in \cite{Elk81,KSS10,KK10}. Any
$\Q$-Gorenstein variety with rational singularities that is not log
terminal (for instance, the cone over an Enriques surface embedded by a
sufficiently positive line bundle) gives an instance where the inclusion
$\ov{\lcid X^r} \subset \ov{\lcid{r,X}}$ is strict.
\end{rmk}

\subsection{On the Cohen--Macaulay condition}

We discuss here an example showing that the Cohen--Macaulay hypothesis
cannot be dropped in the closing assertions of the above corollaries. The
example is known to the experts. For the convenience of the reader, we
first review some facts about cone singularities. 

To fix notation, let $S$ be a smooth projective variety of dimension $n-1
\ge 2$, embedded in a projective space by a projectively normal ample line
bundle $\O_S(1)$. Let then $X = \Spec \bigoplus_{m \ge 0}
H^0\big(\O_S(m)\big)$ be the cone over $S$, and let $f\colon Y \to X$ be
the resolution given by the total space of $\O_S(-1)$. The zero section of
such line bundle is the exceptional divisor $E$ of $f$. Note that $X$ is
normal. Let $x \in X$ be the vertex of the cone. 

We have 
\[
\om_X \cong \bigoplus_{m \in \Z} H^0\big(\om_S(m)\big) \and
f_*\om_Y \cong \bigoplus_{m > 0} H^0\big(\om_S(m)\big)
\]
by Theorem~(2.8) of \cite{Wat81} and Proposition~(1.6) of \cite{Wat83}, and therefore
$f_*\om_Y = \om_X$ if and only if $H^0\big(\om_S(m)\big) = 0$ for $m \le 0$.
One can similarly see that 
$f_*\om_Y(E) = \om_X$ if and only if $H^0\big(\om_S(m)\big) = 0$ for $m < 0$, 
but we will not use this fact.

It is well-known that $X$ is Cohen--Macaulay if and only if
$H^i\big(\O_S(m)\big) = 0$ for $n > i > 0$ and $m \ge 0$, and the
singularity is rational if and only if the same vanishing holds for $i>0$
and $m \ge 0$. It was proven by Du Bois \cite{DB81} (see also \cite{Ste83})
that $x\in X$ is a Du Bois singularity if and only if the natural map
$R^if_*\O_Y \to R^if_*\O_E$ is an isomorphism for all $i > 0$, or
equivalently, if and only if $R^if_*\O_Y(-E) = 0$ for $i > 0$. We will use
the following consequence of this property, which we learned from Karl
Schwede. 

\begin{lem}
\label{l:DB-cone}
With the above notation, if $X$ has Du Bois singularities then 
$H^i\big(\O_S(m)\big) = 0$ for all $n > i > 0$ and $m > 0$. 
\end{lem}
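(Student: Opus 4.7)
The plan is to translate the Du Bois vanishing $R^if_*\O_Y(-E)=0$ for $i>0$ into vanishing of cohomology on $S$, using the fact that $Y$ is the total space of $\O_S(-1)$.

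First, since $X$ is affine and $R^if_*\O_Y(-E)$ is coherent, the Leray spectral sequence degenerates and gives
\[
H^i\big(Y,\,\O_Y(-E)\big) \cong H^0\big(X,\,R^if_*\O_Y(-E)\big) \fall i>0.
\]
Hence the Du Bois hypothesis implies $H^i(Y,\O_Y(-E))=0$ for all $i>0$.

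Next I would compute $H^i(Y,\O_Y(-E))$ through the bundle projection $\pi \colon Y \to S$. Since $Y=\Spec_S \Sym^\bullet \O_S(1)$, the map $\pi$ is affine, so $R^i\pi_*\O_Y(-E)=0$ for $i>0$ and Leray again collapses to $H^i(Y,\O_Y(-E)) = H^i(S,\pi_*\O_Y(-E))$. The key identification is
\[
\pi_*\O_Y \cong \bigoplus_{m\ge 0} \O_S(m), \qquad
\pi_*\O_Y(-E) \cong \bigoplus_{m\ge 1} \O_S(m),
\]
where the second equality holds because the zero section of a line bundle is cut out by the direct summand of positive weight under the natural $\mathbf{G}_m$-action (equivalently, it is the relative $\Proj$ ideal of sections of positive degree). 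Combining these,
\[
0 = H^i\big(Y,\O_Y(-E)\big) \cong \bigoplus_{m\ge 1} H^i\big(S,\O_S(m)\big) \fall i>0.
\]
Since a direct sum of vector spaces vanishes only if each summand does, we conclude $H^i(S,\O_S(m))=0$ for all $i>0$ and all $m\ge 1$, which in particular yields the claim for $n>i>0$ and $m>0$.

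The main obstacle, such as it is, is the computation of $\pi_*\O_Y(-E)$; everything else is a direct application of Leray on affine maps and affine schemes. One must only verify that the ideal of the zero section in the symmetric algebra $\Sym^\bullet \O_S(1)$ is the augmentation ideal $\bigoplus_{m\ge 1}\O_S(m)$, which is immediate from the description of $Y$ as a line bundle total space.
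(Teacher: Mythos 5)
Your proof is correct, and it takes a genuinely different and more direct route than the paper's. The paper's argument works through local cohomology at the vertex: it invokes the graded description $H^{i+1}_x(\O_X)\cong\bigoplus_{m\in\Z}H^i(\O_S(m))$ from Watanabe, then uses the Du Bois vanishing $R^if_*\O_Y(-E)=0$ to collapse a Leray spectral sequence for local cohomology supported on $E$, and finally passes through two applications of duality together with relative Grauert--Riemenschneider to land on $H^{i+1}_x(\O_X)\cong H^i(\O_S)$; comparing graded pieces gives the claim. Your argument avoids local duality and Grauert--Riemenschneider entirely: you push forward to $X$ (Leray plus affineness of $X$ kills the higher cohomology once $R^if_*\O_Y(-E)=0$), then push forward to $S$ (Leray for the affine bundle projection $\pi$), and read off the vanishing from the explicit identification $\pi_*\O_Y(-E)\cong\bigoplus_{m\ge 1}\O_S(m)$. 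All the steps check out: the identification of the zero-section ideal with the augmentation ideal of $\Sym^\bullet\O_S(1)$ is correct, and cohomology commutes with the infinite direct sum since $S$ is noetherian. Your route is shorter and more elementary; the paper's route, as a byproduct, also yields the vanishing for $m<0$ (since it treats all graded pieces at once), but that extra information is not used.
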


\begin{proof}
By Lemma~(2.3) of \cite{Wat83}, 
\[
H^{i+1}_x\big(\O_X\big) \cong \bigoplus_{m \in \Z} H^i\big(\O_S(m)\big).
\]
Note that $H^{i+1}_x\big(\O_X\big) \cong H^{i+1}_x\big(\frm\big)$, where
$\frm = f_*\O_Y(-E)\subset\O_X$ is the maximal ideal of $x$. The vanishing
of the higher direct images of $\O_Y(-E)$ gives the degeneration of the
appropriate Leray spectral sequence, and thus using duality (see
Proposition~(11.6) of \cite{Kol97}) in combination with the relative
version of the Grauert--Riemenschneider vanishing theorem, we have
\[
H^{i+1}_x\big(\O_X\big) \cong H^{i+1}_E\big(\O_Y(-E)\big) \overset{\rm dual}\sim
R^{n-i-1}f_*\om_Y(E) \cong R^{n-i-1}f_*\om_Y 
\overset{\rm dual}\sim H^i_E\big(\O_Y\big) \cong H^i\big(\O_S\big).
\]
The assertion follows by comparing the two formulas. 
\end{proof}

We are now ready to discuss the example.

\begin{eg}
\label{eg:example}
Let $C$ be a non-hyperelliptic curve of genus $g$. Fix a non-special
divisor $B$ on $C$ such that $\deg B \ge 2g+1$ (note that linear system
$|2(B-K_C)|$ is very ample and thus contains smooth elements), and let $\cE
= \O_C \oplus \om_C(-B)$. Following \cite{FGP05}, the ruled surface $\p
\colon S = \P_C(\cE) \to C$ is a \emph{canonical geometrically ruled
surface}. Moreover, if $H = C_0 + \p^*B$ where $C_0$ is the section
determined by the quotient $\cE \to \O_C$, then the linear system $|H|$
determines a projectively normal embedding of $S$ as a \emph{canonical
scroll} in some $\P^N$ (see Theorem~6.16 of \cite{FGP05}). Let $X$ be the
cone over $S \subset \P^N$. Since $H^1\big(\O_S(1)\big) \cong
H^1\big(C,\cE\big) \ne 0$, the singularity is not Du Bois (and thus not
rational) by Lemma~\ref{l:DB-cone}. On the other hand we have
$H^0\big(\om_S(m)\big) = 0$ for all $m \le 0$, and so $f_*\om_Y = \om_X$.
We conclude that the pair $(X,\lcid X^{-1})$ is J-canonical (and thus log
J-canonical) by Theorem~\ref{t:char}.  
\end{eg}

\subsection{A general sufficient condition}

Using inversion of adjunction and the results of \cite{Kaw98,KK10} in place
of Theorem~\ref{t:can}, one obtains the following general sufficient
condition. The argument was brought to our attention by Mircea \Mustata.

\begin{thm}
\label{t:rat-DB-inv-adj}
Let $X$ be a reduced equidimensional scheme. 
\begin{enumerate}
\item
If $X$ is J-canonical, then $X$ is the disjoint union of its irreducible
components, each of which is log terminal in the sense of \cite{dFH09} and
has rational singularities. In particular, $X$ is normal and
Cohen--Macaulay.
\item
If $X$ is log J-canonical, then $X$ has Du Bois singularities.
In particular, $X$ is semi-normal.  
\end{enumerate}
\end{thm}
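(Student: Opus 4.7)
The plan is to embed $X$ in a smooth ambient variety $M$ of codimension $e$ and use Theorem~\ref{t:inv-adj} (together with Remark~\ref{r:inv-adj} when $\dim X = 1$) with $\frA = \O_M$ to translate the hypotheses into conditions on the pair $(M,\I_X^e)$. Concretely, $X$ log J-canonical is equivalent to $(M,\I_X^e)$ being a log canonical pair at every point of $X$, and $X$ J-canonical corresponds to the strengthened bound $\mld p{M,\I_X^e}\ge 1$ at every closed point of $X$; semi-continuity (Corollary~\ref{c:semi-contII}) justifies focusing on closed points.

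For part (b), I would apply the main theorem of Koll\'ar--Kov\'acs \cite{KK10} to the log canonical pair $(M,\I_X^e)$ (after reducing, if necessary, from an ideal-sheaf to a divisorial boundary by replacing $\I_X^e$ with $\tfrac{e}{N}D$ for a general $D\in|\I_X^N|$ with $N\gg 0$): the union of its log canonical centers, with its natural scheme structure, is Du Bois. Blowing up the smooth locus of $X$ produces an exceptional divisor of vanishing log discrepancy whose center is $X$, so $X$ is itself an lc center of $(M,\I_X^e)$, and a verification that the non-klt locus coincides set-theoretically with $X$ identifies $X$ with the Du Bois scheme. Semi-normality is then automatic, since Du Bois schemes are semi-normal.

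For part (a), I would first invoke (b) to conclude that $X$ is Du Bois and semi-normal. The strengthened bound $\mld p{M,\I_X^e}\ge 1$ at closed points then allows one to apply Kawamata's subadjunction theorem \cite{Kaw98} to each irreducible component of $X$: it produces on the normalization of such a component an effective boundary $\Delta$ making the induced pair klt, and Kawamata's theorem that klt implies rational for $\Q$-Gorenstein varieties yields rational singularities on each such normalization. A separate local computation of $\ord_E(\Jac_X)$ and $\ord_E(\Jac_f)$ for a prime divisor $E$ obtained from a suitable blow-up at a point where distinct irreducible components of $X$ meet shows $\aj E X \le 0$, contradicting J-canonicity; hence the components of $X$ must already be disjoint, each equals its own normalization, and each is rational and log terminal in the sense of \cite{dFH09}. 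Normality together with rational singularities forces Cohen--Macaulay.

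The main obstacle I expect is the local discrepancy computation at the intersection of two components in part (a): generalizing the codimension-one model $X = V(x_1 x_2)$---where a direct calculation yields $\aj E X = 0$ on the strict transform of the intersection divisor---to arbitrary codimension of the intersection locus requires exhibiting the right sequence of blow-ups and tracking both $\Jac_X$ and $\Jac_f$ carefully. The scheme-theoretic identification of the non-klt locus with $X$ in (b) is more routine given the explicit structure of lc centers of $(M,\I_X^e)$.
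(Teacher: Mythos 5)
Your overall strategy matches the paper's for most of the argument: embed $X$ in a smooth $M$ of codimension $e$, use inversion of adjunction (Theorem~\ref{t:inv-adj} with Remark~\ref{r:inv-adj}) to translate J-canonicity/log J-canonicity of $X$ into the pair $(M,eX)$ being canonical/log canonical along $X$, and then apply \cite{Kaw98} and \cite{KK10}. Your treatment of part~(b) is essentially identical to the paper's: $X$ is a union of log canonical centers of $(M,eX)$, and Theorem~1.4 (together with Remark~1.11) of \cite{KK10} gives Du Bois and semi-normality; the technical reduction from an ideal boundary to a divisorial one and the blow-up of the smooth locus are both reasonable housekeeping steps.

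Where you diverge from the paper is the disjointness-of-components step in part~(a), which you flag as your main obstacle. Rather than attempting the explicit local computation of $\ord_E(\Jac_f)-\ord_E(\Jac_X)$ at a point where two components meet (which is fine in the hypersurface example $V(x_1x_2)$ but harder to set up uniformly in higher codimension), the paper's proof uses two formal facts about the pair $(M,eX)$: since the pair is canonical in a neighborhood of $X$, each irreducible component $X_i$ of $X$ is an \emph{isolated} lc center (there are no lc centers strictly contained in $X_i$, because any such would have $\mld = 0$, while every closed $T\subset X$ of codimension $\ge 1$ has $\mld_T(M,eX)=\jmld_T(X)\ge 1$); and intersections of lc centers are again lc centers. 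If two components met, their intersection would be an lc center strictly inside each, contradicting isolation. This avoids the explicit Jacobian computation entirely and is more robust.

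Two smaller points. First, because each $X_i$ is an isolated (minimal) lc center of a canonical pair, \cite{Kaw98} gives directly that $X_i$ is normal and carries a klt boundary — you do not need to pass to the normalization and then argue the component equals it, which you hedge about. Second, you propose proving (a) by first invoking (b) to get Du Bois and semi-normality and only then establishing rationality; the paper gets the conclusions of (a) directly from \cite{Kaw98} applied to the isolated lc centers, without routing through (b). Both orderings are logically sound, but the paper's is shorter and does not create the impression that (a) depends on (b).
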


\begin{proof}
We can assume that $X$ is embedded in a smooth variety $M$, with
codimension $e$. If $X$ is J-canonical then by Theorem~\ref{t:inv-adj} (see
Remark~\ref{r:inv-adj} if $\dim X = 1$) the pair $(M,eX)$ is canonical, and
thus each irreducible component of $X$ is an isolated log canonical center
of the pair. Since intersections of log canonical centers are log canonical
centers, it follows in particular that $X$ is the disjoint union of its
irreducible components. Moreover, it follows by the main result of
\cite{Kaw98} that each irreducible component of $X$ has rational
singularities and is log terminal in the sense of \cite{dFH09}. If $X$ is
only log J-canonical, then $(M,eX)$ is log canonical (again by
Theorem~\ref{t:inv-adj} and Remark~\ref{r:inv-adj}) and $X$ is a log
canonical center of $(M,eX)$, so the result follows in this case by
Theorem~1.4 of \cite{KK10}. Regarding the last assertion in~(b), see
Remark~1.11 of \cite{KK10}.
\end{proof}

The above theorem is well-known to the specialists once the assumptions on
the singularities of $X$ are expressed in terms of the jet schemes of
$X_m$. Indeed, by Corollary~\ref{c:jet-sing}, the result can be rephrased
by saying that if $\dim X_m = n(m+1)$ for every $m$, then $X$ has Du Bois
singularities, and if moreover every irreducible component of maximal
dimension $n(m+1)$ of $X_m$ dominates an irreducible component of $X$, then
$X$ has rational singularities.


\vspace{0.2cm}
\setlength{\parindent}{0in}
\def\scshape{}

\end{document}